\DeclareMathOperator{\tr}{tr}
\newtheorem{lemma}{Lemma}
\newtheorem{theorem}{Theorem}
\theoremstyle{remark}
\newtheorem{remark}{Remark}
\title{Linear, decoupled, positivity preserving, positive-definiteness preserving and energy stable schemes for the diffusive Oldroyd-B coupled with PNP model}
\author[1]{Wenxing Zhu}
\author[2]{Mingyang Pan}
\author[1,*]{Dongdong He}
\affil[1]{School of Science and Engineering, The Chinese University of Hong Kong, Shenzhen, Shenzhen, Guangdong, 518172, P.R.China}
\affil[2]{School of Science, Hebei University of Technology, Tianjin, 300401, P.R.China}
\affil[*]{Corresponding author: \texttt{hedongdong@cuhk.edu.cn}}
\begin{document}

\maketitle




\begin{abstract}
In this paper, we present a first-order finite element scheme for the viscoelastic electrohydrodynamic model. The model incorporates the Poisson-Nernst-Planck equations to describe the transport of ions and the Oldroyd-B constitutive model to capture the behavior of viscoelastic fluids. To preserve the positive-definiteness of the conformation tensor and the positivity of ion concentrations, we employ both logarithmic transformations. The decoupled scheme is achieved by introducing a nonlocal auxiliary variable and using the splitting technique.
The proposed schemes are rigorously proven to be mass conservative and energy stable at the fully discrete level. To validate the theoretical analysis, we present numerical examples that demonstrate the convergence rates and the robust performance of the schemes. The results confirm that the proposed methods accurately handle the high Weissenberg number problem (HWNP) at moderately high Weissenberg numbers. Finally, the flow structure influenced by the elastic effect within the electro-convection phenomena has been studied.
\end{abstract}

\keywords{viscoelastic fluids, fully-decoupled scheme, characteristic finite element, energy stable, logarithmic transformation}

\section{Introduction}
Electrohydrodynamic flow concerns the motions of ionized particles or molecules, the dynamics of electrically charged fluids, and their interactions with electric fields and the surrounding fluids. Physically, the motion of fluid flow is driven by the Coulomb force created by the ions under the electric field and the ionic diffusion is driven by the concentration gradients of the ions. Due to the presence of ions, the resulted electrical field can affect the behavior and distribution of charged ions in the fluid. Such phenomena have been popular in a wide range of industrial and commercial areas such as electrophoretic separation of macromolecules~\cite{ghosal2006electrokinetic}, inkjet printing~\cite{paul2022numerical,guan2022numerical,jiang2021cfd}, heat transfer enhancement~\cite{grassi2005ehd,testi2018heat,laohalertdecha2007review}.

Viscoelastic fluid is a type of complex fluid which has both viscous and elastic properties, as encountered in cosmetics industry~\cite{gallegos1999rheology}, food processing industries~\cite{bistany1983comparison} and blood~\cite{montecinos2014hyperbolic}. Compared with the Newtonian fluid, the total stress for the viscoelastic flow has both viscous stress and elastic stress while Newtonian fluids only have viscous
stress. The dynamics of viscoelastic flow is governed by the conservation of mass and momentum equations with different constitutive equations. Typical constitutive laws are Oldroyd-B~\cite{oldroyd1950formulation}, Giesekus~\cite{giesekus1982simple}, Phan-Thien-Tanner (PTT)~\cite{thien1977new}, eXtended Pom-Pom (XPP)~\cite{verbeeten2001differential}, among which the Oldroyd-B model is the simplest one. The simulation of these flow model is a challenging task from the both theoretical and numerical points of view. The constitutive equation is highly coupled and advection dominated which may induce both global and local oscillations in the numerical solution. In addition, all numerical schemes to simulate viscoelastic flows meet the so-called HWNP, i.e. the difficulty of convergence of numerical algorithms encountered when the Weissenberg number is above certain values. A mechanism responsible for instability seen at high Weissenberg number has been proposed in ~\cite{boyaval2009free,fattal2005time}. The failure to properly balance the the exponential growth of deformation with the convection can cause a numerical instability. To improve the numerical stability, many approaches are developed. ~\cite{hulsen1990sufficient} rewrited the stress equation in terms of conformation stress tensor and showed that conformation stress tensor is symmetric positive-definite. The failure to satisfy positivity can yield a numerical instability throughout the computations. To preserve the positive definiteness of the tensor, various mathematical reformulations and numerical schemes have been developed. ~\cite{lee2006new} used the direct discretization of the objective derivative. The square-root of the conformation tensor was introduced in~\cite{balci2011symmetric,lozinski2003energy}. The logarithm conformation representation (LCR) method was proposed by ~\cite{fattal2004constitutive,hulsen2005flow}. ~\cite{knechtges2014fully} proposed a new but fully implicit LCR method to avoid the eigen-decomposition of the velocity gradient. In addition, the kernel conformation transformation was applied by ~\cite{afonso2012kernel}.


As we all know, for the constituents
of the full electrohydrodynamic model of Newtonian fluid, there are many effective methods available for each individual equation ~\cite{pan2021energy,pan2020unconditionally,he2019positivity}.
For the separate subproblems comprising the Oldroyd-B electrohydrodynamic problem, there exists many efficient numerical methods. So far, for the Oldroyd-B viscoelastic fluid model, ~\cite{lukavcova2016energy} proposed energy dissipative characteristic schemes for the diffusive
Oldroyd-B viscoelastic fluid.~\cite{varchanis2019new} presented proposed a new, fully consistent and highly stable formulation by combination of classical finite element stabilization techniques with LCR of the constitutive equation. As far as the author knows, these numerical schemes are nonlinear and coupled while maintaining energy stability.~\cite{zhao2015simpler} proposed a simpler GMRES method combined with finite volume method for simulating viscoelastic flows.~\cite{castillo2015first} presented first, second and third order fractional step methods for the three-field viscoelastic flow.~\cite{venkatesan2017three} presented a three-field local projection stabilized formulation. For the Poisson-Nernst-Planck problem to describe the dynamics of ions under an electric field, various schemes have been applied in  \cite{gao2017linearized,prohl2009convergent,shen2021unconditionally,liu2022positivity,shen2020decoupling}.
~\cite{pan2021positive} presented positive-definiteness preserving and energy stable time-marching scheme for a diffusive Oldroyd-B electrohydrodynamic model. Our research in this paper can be regarded as an extension of ~\cite{pan2021positive}.



Motivated by the nonlocal auxiliary variable method developed in~\cite{he2023decoupled, zhou2023efficient, wang2023decoupled, pan2024linear, yang2021novel, shen2018scalar}, we aim to present a linear, decoupled, conservative, positivity-preserving and energy stable scheme for the viscoelastic Oldroyd-B flow.
The main challenge is how to deal with the nonlinear coupling terms in the fully discrete scheme while maintaining the energy stability. Thus, to overcome it, we introduce a nonlocal variable and an ordinary differential equation(ODE) associated with it.
The ODE can allow to construct a linear and explicit scheme for discretizing the nonlinear and coupling terms.
In this work, the backward Euler scheme coupled with the projection method of the Navier-Stokes equations and characteristic finite element method of log-conformation tensor is designed.
The resulting scheme satisfies the following properties: (1) discrete energy stable; (2) mass conservative;
(3) preserving positivity of concentrations; (4) preserving the positive-definiteness of conformation tensor;
(5) the numerical scheme can be implemented by solving decoupling linear equations.
To the best of the author's knowledge, the scheme developed in this article is the first to have above five characteristics.

The rest of this paper is organized as follows. In Section 2, we reformulate the mathematical model and formally derive the free energy dissipation law for viscoelastic electrohydrodynamic model.
In Section 3, we reformulate the model based on auxiliary variable approach and derive the free energy dissipation law.
In Section 4, we design a linear and energy stable scheme. 
A fully decoupled numerical scheme is constructed in section 5 and we further describe its implementations in detail. In Section 6, numerical results are presented to validate our schemes. Conclusions are given in the last section.

\section{Mathematical model}

\subsection{Governing equations of viscoelastic Oldroyd-B model}
In this work, we consider the incompressible viscoelastic fluid model in a bounded domain $\Omega\subset \mathbb{R}^2$.  Additionally, gravity effects is neglected. 
the induced magnetic fields are usually neglected due to very low currents in the liquids.

The model follows the incompressible Navier-Stokes equations,
the viscoelastic stress in the fluid can be described by
\begin{subequations}\label{model}
\begin{align}
&\partial_tc_i + \mathbf{u}\cdot\nabla c_i = D\nabla\cdot(c_i\nabla g_i ),\label{concentration} \\
&g_i = \log \frac{c_i}{c_0} + \frac{z_i e}{k_BT} V,\\
&-\nabla\cdot(\epsilon\nabla V) = \sum\limits_{i}z_ic_i,\label{Possion}\\
&\nabla\cdot \mathbf{u}=0,\label{incompressible} \\
&\rho_f(\partial_t\mathbf{u} + (\mathbf{u}\cdot\nabla)\mathbf{u})  + \nabla p = \nabla\cdot\mathbf{T} - \sum\limits_{i}z_ic_i\nabla V,\label{NS} \\
&\mathbf{T} +\lambda_1 \stackrel{\triangledown}{\mathbf{T}} =2\mu(\mathbb{D}+\lambda_2\stackrel{\triangledown}{\mathbb{D}}) ,\label{Oldroyd-B}
\end{align}
\end{subequations}
where $c_i$ is the ion concentration of the ith species with $i\in 1,...,N$, $g_i$ is the corresponding chemical potentials, $z_i$ is the ionic valency, $D$ is the diffusion constant, $k_B$ is the Boltzmann's constant, $T$ is the absolute temperature, $e$ is the unit charge, $V$ is the electric potential, $\epsilon$ is the electric permittivity, $\rho_f$ is the fluid density, $\mathbf{u}=(u,v)$ is the fluid velocity, $p$ is the pressure, $\mathbb{D}=\frac{1}{2}[\nabla \mathbf{u} + (\nabla \mathbf{u})^T]$ is the deformation tensor, $\mathbf{T}$ is the total stress tensor, $\mu$ is the total viscosity, $\lambda_1$, $\lambda_2$ are the relaxation time and retardation time, respectively. The relaxation time $\lambda_1$ is assumed to be bigger than the retardation time $\lambda_2$. The upper-convected derivative is defined by
\begin{align*}
\stackrel{\triangledown}{\bm{\tau}} = \partial_t\bm{\tau} + (\mathbf{u}\cdot\nabla)\bm{\tau} -\nabla\mathbf{u}\cdot\bm{\tau}- \bm{\tau}\cdot \nabla\mathbf{u}^{T}.
\end{align*}
The stress tensor $\mathbf{T}$ consists of the purely viscous component $2\mu\alpha \mathbb{D}$ with $ \alpha=\frac{\lambda_2}{\lambda_1}$ and the elastic component $\bm{\tau}$, namely
\begin{align*}
\mathbf{T} = \bm{\tau} + 2\mu\alpha\mathbb{D}.
\end{align*}
By replacing $\mathbf{T}$ in \eqref{Oldroyd-B} with $\bm{\tau}$ and and using the fact that $\stackrel{\triangledown}{\mathbf{I}}=-2\mathbb{D}$, the Oldroyd-B constitutive equation with elastic
stress tensor $\bm{\tau}$ is given by
\begin{align}\label{Oldroyd-B-elastic-part}
\bm{\tau} +\lambda_1 \stackrel{\triangledown}{\bm{\tau}} =2\mu_p\mathbb{D},
\end{align}
where $\mu_p=\mu (1-\alpha)$ is polymer viscosity. Now we introduce the dimensionless conformation tensor $\bm{\sigma}$ as
\begin{align}\label{sigma}
\bm{\sigma}= \mathbf{I} + \frac{\lambda_1}{\mu (1-\alpha)}\bm{\tau},
\end{align}
which has the positive definite property \cite{boyaval2009free,hulsen1990sufficient}. Substituting \eqref{sigma} into \eqref{Oldroyd-B-elastic-part} , the model \eqref{model} can be rewritten as
\begin{subequations}\label{New-model}
\begin{align}
&\partial_tc_i + \mathbf{u}\cdot\nabla c_i = D\nabla\cdot(c_i\nabla g_i ),\label{New-concentration} \\
&g_i = \log \frac{c_i}{c_0} + \frac{z_i e}{k_BT} V,\\
&-\nabla\cdot(\epsilon\nabla V) = \sum\limits_{i}z_ic_i,\label{New-Possion}\\
&\rho_f(\partial_t\mathbf{u} + (\mathbf{u}\cdot\nabla)\mathbf{u})  + \nabla p = \mu_s\Delta \mathbf{u}+ \frac{\mu_p}{\lambda_1}\nabla\cdot\bm{\sigma} - \sum\limits_{i}z_ic_i\nabla V,\label{New-NS} \\
&\nabla\cdot \mathbf{u}=0,\label{New-incompressible} \\
&\bm{\sigma} +\lambda_1 \stackrel{\triangledown}{\bm{\sigma}} = \mathbf{I} ,\label{New-Oldroyd-B}
\end{align}
\end{subequations}
where $\mu_s=\alpha\mu$ is solvent viscosity.
\subsection{Non-dimensionalisation}
To get a nondimensional formulation, we introduce the following dimensionless variables:
\begin{align*}
&\tilde{\mathbf{x}} = \frac{\mathbf{x}}{\hat{l}}, \quad \tilde{\mathbf{u}} = \frac{\mathbf{u}}{\hat{u}},\quad \tilde{t} = \frac{t}{\hat{l}/\hat{{u}}},\quad \tilde{c}_i = \frac{c_i}{c_0},\quad\tilde{V} =\frac{V}{k_BT/e}, \\
&\tilde{p} = \frac{p}{\rho_f \hat{{u}}^2},\quad \tilde{\bm{\tau}}=\frac{\bm{\tau}}{\mu_p/\lambda_1}.
\end{align*}
For clarity, we omit the superscript of the dimensionless variables. Taking into account the diffusive effects in the evolution equation of the elastic stress, the govern equations of the dimensionless diffusive viscoelastic electrohydrodynamic model with an Oldroyd-B constitutive equation \eqref{New-model} become:
\begin{subequations}\label{non-model}
\begin{align}
&\partial_tc_i +\nabla\cdot(\mathbf{u} c_i) = \frac{1}{Pe}\nabla\cdot(\nabla c_i + z_ic_i\nabla V ),\label{non-concentration} \\
&-\lambda\Delta V = \sum\limits_{i}z_ic_i,\label{non-Possion}\\
&\nabla\cdot \mathbf{u}=0,\label{non-incompressible} \\
&\partial_t\mathbf{u} + (\mathbf{u}\cdot\nabla)\mathbf{u}  - \frac{1}{Re} \Delta\mathbf{u} + \nabla p = M\nabla\cdot \bm{\sigma} -Co\sum\limits_{i}z_ic_i\nabla V,\label{non-NS} \\
&\partial_t\bm{\sigma} + (\mathbf{u}\cdot\nabla)\bm{\sigma} -\nabla\mathbf{u}\cdot\bm{\sigma}- \bm{\sigma}\cdot \nabla\mathbf{u}^{T} =  \frac{1}{Wi} (\mathbf{I}-\bm{\sigma}) + \kappa_1 \Delta \bm{\sigma},\label{non-Oldroyd-B}
\end{align}
\end{subequations}
where $\kappa_1>0$ is a diffusive parameter and the nondimensional numbers are defined as follows
\begin{align*}
Re=\frac{ \rho_f\hat{u} \hat{l}}{\mu_s},\quad Co=\frac{c_0k_BT}{\rho_f \hat{u}^2e},\quad Pe=\frac{\hat{l}\hat{u}}{D},\quad \lambda=\frac{\epsilon k_BT}{\hat{l}^2c_0e},\quad Wi=\frac{\lambda_1\hat{u}}{\hat{l}},\quad M=\frac{\mu_p}{\rho_f \hat{u}^2\lambda_1}.
\end{align*}
Here, $Re$ is the Reynolds number, $Co$ is the Coulomb-driven number, $Pe$ is the P\'{e}lect number, $\lambda$ is the ratio of Debye length to the characteristic length, $Wi$ is the Weissenberg number, and $M$ is the Ratio of elasticity to inertia.
The initial and boundary conditions are given by
\begin{align}
&c_i|_{t=0}=c_{i0}, \quad V|_{t=0}=V_0, \quad \bm{u}|_{t=0}=\bm{u}_0, \quad \bm{\sigma}|_{t=0} = \bm{\sigma}_0, \label{eq:IC}\\
&\frac{\partial V}{\partial \mathbf{n}}|_{\partial\Omega}=0, \quad \frac{\partial c_i}{\partial \mathbf{n}}|_{\partial\Omega}=0, \quad \bm{u}|_{\partial\Omega}=\mathbf{0}, \quad \frac{\partial \bm{\sigma}}{\partial \mathbf{n}}|_{\partial\Omega}=\mathbf{0}, \label{eq:BC}
\end{align}
where $\mathbf{n}$ is the unit outward normal on the boundary $\partial\Omega$.

\subsection{Energy decay}
Firstly, some basic notations are presented to be used in subsequent presentations. 
We introduce the following functional spaces:
\begin{align*}
&L_{0}^2(\Omega)=\left\{q\in L^2(\Omega) : \int_{\Omega}qdx=0\right\},\\
&H_{0}^1(\Omega)=\left\{s\in H^1(\Omega) : s=0 \ \ \mbox{on}\ \ \partial\Omega\right\};\\
&\bm{X}=\left\{\bm{v}\in H_0^1(\Omega)^2: \bm{v}|_{\partial \Omega}=\mathbf{0}\right\},\\
&M=L_{0}^2(\Omega),\quad S=\left\{s\in H^1(\Omega):\int_{\Omega}sdx=0\right\},\\
&Q=\left\{\varphi\in H^1(\Omega)\right\},\\
&\bm{V}=\left\{\bm{\psi}=[\psi_{ij}], 1\leq i,j\leq 2, \psi_{12}=\psi_{21}, \psi_{ij}\in H^1(\Omega) \right\}.
\end{align*}
$L^2(\Omega)$ denotes the standard Lebesgue functional space which is equipped with the inner product $(f,g)= \int_{\Omega} f(\bm{x})g(\bm{x}) d\bm{x}$ and the $L^2$-norm $\|f\|_{L^2}=(f,f)^{\frac{1}{2}}$. The space $\bm{X}$ is equipped with their usual scalar product $(\nabla\bm{u},\nabla\bm{v})$ and norm $\|\nabla \bm{u}\|_0$.
The double contraction $\bm{\tau}:\bm{\sigma}$ between rank-two tensors $\bm{\sigma}$, $\bm{\tau}\in \mathbb{R}^{d\times d}$ is defined by:
\begin{align}\label{double-contraction}
\bm{\tau}:\bm{\sigma}=\tr(\bm{\tau}\bm{\sigma}^{T})=\tr(\bm{\tau}^{T}\bm{\sigma}) = \sum\limits_{1\leq i,j\leq d}\bm{\tau}_{ij}\bm{\sigma}_{ij}.
\end{align}
Notice that if $\bm{\tau}$ is anti-symmetric and $\bm{\sigma}$ is symmetric, then $\bm{\tau}:\bm{\sigma} = 0$. Next, some properties of the positive-definite matrix are given in the following lemmas and the proof can be
found in~\cite{boyaval2009free}.
\begin{lemma}
Let $\bm{\sigma}$, $\bm{\tau}\in \mathbb{R}^{d\times d}$ be two positive definite matrices, then it holds
\begin{align}
&\bm{\sigma}-\ln\bm{\sigma}-\bm{I} \ \text{is positive semi-definite and} \ \tr(\bm{\sigma}-\ln\bm{\sigma}-\mathbf{I})\geq 0  ,\label{lemma1.1}\\
&\bm{\sigma}+\bm{\sigma}^{-1}-2\mathbf{I} \ \text{is positive semi-definite and} \ \tr(\bm{\sigma}+\bm{\sigma}^{-1}-2\mathbf{I})\geq 0  ,\label{lemma1.2}\\
&\tr((\ln\bm{\sigma}-\ln\bm{\tau})\bm{\sigma})\geq\tr(\bm{\sigma}-\bm{\tau}),\label{lemma1.3}\\
&\nabla (\ln\bm{\sigma}):\nabla \bm{\sigma}\geq 0,\label{lemma1.4}\\
&\nabla \bm{\sigma}:\nabla \bm{\sigma}^{-1}\leq 0.\label{lemma1.5}
\end{align}
\end{lemma}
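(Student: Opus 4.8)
The plan is to handle the five statements in three groups according to the technique, each reducing to an elementary scalar inequality or a standard matrix identity.

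\textbf{Statements \eqref{lemma1.1} and \eqref{lemma1.2}.} Since $\bm{\sigma}$ is symmetric positive definite, I would diagonalize $\bm{\sigma}=Q\Lambda Q^{T}$ with $Q$ orthogonal and $\Lambda=\mathrm{diag}(\lambda_1,\dots,\lambda_d)$, $\lambda_k>0$. Then $\ln\bm{\sigma}=Q(\ln\Lambda)Q^{T}$ and $\bm{\sigma}^{-1}=Q\Lambda^{-1}Q^{T}$ share the eigenbasis of $\bm{\sigma}$, so $\bm{\sigma}-\ln\bm{\sigma}-\mathbf I$ and $\bm{\sigma}+\bm{\sigma}^{-1}-2\mathbf I$ are symmetric with eigenvalues $\lambda_k-\ln\lambda_k-1$ and $(\lambda_k-1)^2/\lambda_k$ respectively. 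Both are nonnegative for $\lambda_k>0$ (the first by a one-line convexity/derivative check on $x-\ln x-1$, the second trivially), hence the two matrices are positive semi-definite, and the trace inequalities follow by summing eigenvalues.

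\textbf{Statement \eqref{lemma1.3}.} The commuting scalar inequality $t\ln t-t+1\ge0$ does not suffice since $\bm{\sigma}$ and $\bm{\tau}$ need not commute, so I would use Klein's inequality (equivalently, convexity of $A\mapsto\tr(A\ln A)$ on positive definite matrices): for the convex $f(x)=x\ln x$ with $f'(x)=\ln x+1$,
\begin{align*}
\tr\big(f(\bm{\sigma})\big)-\tr\big(f(\bm{\tau})\big)\ge \tr\big((\bm{\sigma}-\bm{\tau})\,f'(\bm{\tau})\big).
\end{align*}
Expanding the right-hand side, cancelling $\tr(\bm{\tau}\ln\bm{\tau})$, and using cyclicity of the trace yields $\tr(\bm{\sigma}\ln\bm{\sigma})-\tr(\bm{\sigma}\ln\bm{\tau})\ge\tr(\bm{\sigma})-\tr(\bm{\tau})$, which is exactly $\tr\big((\ln\bm{\sigma}-\ln\bm{\tau})\bm{\sigma}\big)\ge\tr(\bm{\sigma}-\bm{\tau})$. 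Equivalently, the difference of the two sides is the (nonnegative) Bregman divergence of the convex map $A\mapsto\tr(A\ln A-A)$ between $\bm{\sigma}$ and $\bm{\tau}$.

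\textbf{Statements \eqref{lemma1.4} and \eqref{lemma1.5}.} These hold pointwise in $x$, and it suffices to prove for each coordinate direction $x_k$, with $H:=\partial_k\bm{\sigma}$ (symmetric), that $\partial_k(\ln\bm{\sigma}):\partial_k\bm{\sigma}\ge0$ and $\partial_k\bm{\sigma}:\partial_k(\bm{\sigma}^{-1})\le0$; summing over $k$ then gives the claims. From the integral representation $\ln\bm{\sigma}=\int_0^{\infty}\big((1+s)^{-1}\mathbf I-(\bm{\sigma}+s\mathbf I)^{-1}\big)\,ds$ one obtains $\partial_k(\ln\bm{\sigma})=\int_0^{\infty}(\bm{\sigma}+s\mathbf I)^{-1}H(\bm{\sigma}+s\mathbf I)^{-1}\,ds$; with $B_s:=(\bm{\sigma}+s\mathbf I)^{-1/2}$ (symmetric positive definite) and cyclicity of the trace,
\begin{align*}
\partial_k(\ln\bm{\sigma}):\partial_k\bm{\sigma}=\int_0^{\infty}\tr\big(B_s^{2}HB_s^{2}H\big)\,ds=\int_0^{\infty}\tr\big((B_sHB_s)^{2}\big)\,ds\ge0,
\end{align*}
since $B_sHB_s$ is symmetric. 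Similarly $\partial_k(\bm{\sigma}^{-1})=-\bm{\sigma}^{-1}H\bm{\sigma}^{-1}$, so with $B:=\bm{\sigma}^{-1/2}$, $\partial_k\bm{\sigma}:\partial_k(\bm{\sigma}^{-1})=-\tr\big(B^{2}HB^{2}H\big)=-\tr\big((BHB)^{2}\big)\le0$.

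The only genuine obstacles are \eqref{lemma1.3} and \eqref{lemma1.4}: the former because non-commutativity forces one past the scalar inequality to an operator/trace-convexity input (Klein's inequality), and the latter because one must correctly use the Fréchet derivative of the matrix logarithm rather than naively write $\partial_k\ln\bm{\sigma}=\bm{\sigma}^{-1}\partial_k\bm{\sigma}$, which fails unless $\bm{\sigma}$ commutes with $\partial_k\bm{\sigma}$. The remaining parts are routine diagonalization and one-variable calculus.
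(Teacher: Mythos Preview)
Your proof is correct in all five parts. Note, however, that the paper does not actually supply a proof of this lemma: it simply states the results and refers the reader to \cite{boyaval2009free} for the arguments. So there is no in-paper proof to compare against; you have provided what the paper omits.

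A brief remark on your choices. For \eqref{lemma1.1}--\eqref{lemma1.2} the diagonalization argument is the standard one and matches what the cited reference does. For \eqref{lemma1.3} the reference \cite{boyaval2009free} gives a self-contained proof via the scalar inequality $\ln x\le x-1$ applied to eigenvalues of $\bm{\tau}^{1/2}\bm{\sigma}^{-1}\bm{\tau}^{1/2}$ after a trace manipulation, rather than invoking Klein's inequality directly; your approach via trace convexity of $A\mapsto\tr(A\ln A)$ is equivalent and arguably cleaner. For \eqref{lemma1.4} your use of the integral representation of the logarithm to compute the Fr\'echet derivative is exactly the right tool, and you correctly flag that the naive formula $\partial_k\ln\bm{\sigma}=\bm{\sigma}^{-1}\partial_k\bm{\sigma}$ is invalid without commutativity. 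The argument for \eqref{lemma1.5} is straightforward and correct.
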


\begin{lemma}
For any positive definite matrix $\bm{\sigma}(t)\in( C^1([0,T)) )^{\frac{d(d+1)}{2}}$, we have for any $t\in[0, T )$ that
\begin{align}
&(\frac{d}{dt}\ln \bm{\sigma}):\bm{\sigma} = \tr(\bm{\sigma} \frac{d}{dt}\ln\bm{\sigma}) = \frac{d}{dt}\tr\bm{\sigma},\label{lemma2.1}\\
&(\frac{d}{dt}\bm{\sigma}):\bm{\sigma}^{-1} = \tr(\bm{\sigma}^{-1} \frac{d}{dt}\bm{\sigma}) = \frac{d}{dt}\tr(\ln\bm{\sigma}).\label{lemma2.2}
\end{align}
\end{lemma}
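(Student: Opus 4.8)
The plan is to obtain both displayed identities from a single differentiation rule for the trace of a matrix function: for a symmetric $C^1$ path $\bm{M}(t)$ and a scalar function $g$ analytic on a neighbourhood of $\operatorname{spec}(\bm{M}(t))$,
\begin{equation}\label{trace-rule}
\frac{d}{dt}\,\tr\!\big(g(\bm{M}(t))\big)=\tr\!\big(g'(\bm{M}(t))\,\dot{\bm{M}}(t)\big).
\end{equation}
To prove \eqref{trace-rule} I would first check it on monomials $g(x)=x^{n}$: the product rule gives $\tfrac{d}{dt}\bm{M}^{n}=\sum_{k=0}^{n-1}\bm{M}^{k}\dot{\bm{M}}\bm{M}^{n-1-k}$, and cyclicity of the trace collapses each summand to $\tr(\bm{M}^{n-1}\dot{\bm{M}})$, so the sum equals $n\,\tr(\bm{M}^{n-1}\dot{\bm{M}})=\tr(g'(\bm{M})\dot{\bm{M}})$; linearity extends this to polynomials, and a uniform approximation of $g$ and $g'$ by polynomials on the spectrum extends it to analytic $g$.

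Granting \eqref{trace-rule}, the two lines follow. For \eqref{lemma2.2} apply \eqref{trace-rule} with $g=\log$ and $\bm{M}=\bm\sigma$: since $\bm\sigma(t)$ is symmetric positive definite, $\log$ is analytic near its spectrum and $(\log)'(\bm\sigma)=\bm\sigma^{-1}$, hence $\tfrac{d}{dt}\tr(\ln\bm\sigma)=\tr(\bm\sigma^{-1}\dot{\bm\sigma})$ (equivalently, this is Jacobi's formula together with $\tr\ln\bm\sigma=\ln\det\bm\sigma$). For \eqref{lemma2.1} put $\bm{A}(t):=\ln\bm\sigma(t)$; because $\bm\sigma\mapsto\ln\bm\sigma$ is smooth on the open cone of symmetric positive definite matrices, $\bm{A}$ is again a symmetric $C^1$ path with $\bm\sigma=e^{\bm{A}}$, and \eqref{trace-rule} with $g=\exp$, $\bm{M}=\bm{A}$ gives $\tfrac{d}{dt}\tr\bm\sigma=\tfrac{d}{dt}\tr(e^{\bm{A}})=\tr(e^{\bm{A}}\dot{\bm{A}})=\tr\!\big(\bm\sigma\,\tfrac{d}{dt}\ln\bm\sigma\big)$ (the same conclusion follows from the Duhamel formula $\tfrac{d}{dt}e^{\bm A}=\int_{0}^{1}e^{s\bm A}\dot{\bm A}e^{(1-s)\bm A}\,ds$ after taking the trace). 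The remaining equalities — those relating the double contraction $(\cdot):(\cdot)$ to the middle trace expressions — are purely algebraic: by definition $\bm\tau:\bm\rho=\tr(\bm\tau\bm\rho^{T})$, and since $\bm\sigma$, $\bm\sigma^{-1}$, $\ln\bm\sigma$ and $\tfrac{d}{dt}\ln\bm\sigma$ are all symmetric, the transpose drops out and the trace may be reordered cyclically.

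The one place that needs care is the passage from polynomials to $g\in\{\log,\exp\}$ in \eqref{trace-rule}, hand in hand with the regularity of $t\mapsto\ln\bm\sigma(t)$. On any compact $[0,t_{*}]\subset[0,T)$ the eigenvalues of $\bm\sigma$ are continuous and strictly positive, so the spectrum stays in a fixed compact interval $[a,b]\subset(0,\infty)$; on a complex neighbourhood of $[a,b]$ both $\log$ and $\exp$, together with their derivatives, are uniform limits of polynomials (partial sums of a power series), which is exactly what is needed to pass to the limit in \eqref{trace-rule}. The same functional-calculus picture — e.g.\ the contour representation $\ln\bm\sigma=\tfrac{1}{2\pi i}\oint \log z\,(z\mathbf{I}-\bm\sigma)^{-1}\,dz$ with the contour enclosing $[a,b]$ — shows $\ln\bm\sigma(t)$ is $C^1$ in $t$ whenever $\bm\sigma(t)$ is, so $\tfrac{d}{dt}\ln\bm\sigma$ genuinely exists. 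I expect this approximation-and-regularity bookkeeping to be the only real obstacle; beyond it, the proof is just cyclicity of the trace and the symmetry of the matrices involved.
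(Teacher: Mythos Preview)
The paper does not supply its own proof of this lemma; the sentence preceding Lemma~1 defers the proofs of both lemmas to \cite{boyaval2009free}. Your argument is correct and self-contained. The trace-derivative rule $\frac{d}{dt}\tr g(\bm M)=\tr\big(g'(\bm M)\,\dot{\bm M}\big)$ is the natural tool, and your derivation of it (monomials via the product rule and cyclicity, then polynomials by linearity, then analytic $g$ by uniform approximation on a spectral neighbourhood) is the standard route. Applying the rule with $g=\log$ yields \eqref{lemma2.2} directly; applying it with $g=\exp$ after the substitution $\bm A=\ln\bm\sigma$ yields \eqref{lemma2.1}; and the passage from the double contraction to the trace is immediate from symmetry, as you note. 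The only technical point---$C^1$ regularity of $t\mapsto\ln\bm\sigma(t)$ and the compact-spectrum bound needed for the approximation---is real but your contour-integral/functional-calculus justification is adequate.
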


\begin{theorem}
Assume that the system \eqref{non-model} is supplied with boundary conditions \eqref{eq:BC} and symmetric positive definite initial condition $\bm{\sigma}_0$. The free energy satisfies:
\begin{align}\label{continuous-free-energy}
\frac{d}{dt} \int_{\Omega} \left(\frac{1}{2}|\mathbf{u}|^2 + Co\sum\limits_{i}c_i(\log c_i-1) + \frac{Co}{2}\lambda|\nabla V|^2 + \frac{M}{2}\tr(\bm{\sigma}-\ln\bm{\sigma}-\mathbf{I})\right) d\mathbf{x} &\leq 0.
\end{align}
\end{theorem}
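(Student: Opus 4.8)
The plan is to derive \eqref{continuous-free-energy} by the standard energy method: test each PDE of \eqref{non-model} with the variational derivative of the matching term of the free energy and add the results, using Lemmas~1--2 to handle the conformation tensor. The four multipliers I would use are: $\mathbf{u}$ for the momentum equation \eqref{non-NS}; $Co\,g_i$ with $g_i=\log c_i+z_iV$ for the concentration equations \eqref{non-concentration}; $Co\,V$ for the time-differentiated Poisson equation \eqref{non-Possion}; and the matrix $\tfrac{M}{2}(\mathbf{I}-\bm{\sigma}^{-1})$, double-contracted with the conformation equation \eqref{non-Oldroyd-B}. Everything is integrated over $\Omega$ and the boundary conditions \eqref{eq:BC} are invoked repeatedly.

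Testing \eqref{non-NS} with $\mathbf{u}$, the terms $((\mathbf{u}\cdot\nabla)\mathbf{u},\mathbf{u})$ and $(\nabla p,\mathbf{u})$ vanish by $\nabla\cdot\mathbf{u}=0$ and $\mathbf{u}|_{\partial\Omega}=\mathbf{0}$, and $(\nabla\cdot\bm{\sigma},\mathbf{u})=-(\bm{\sigma},\nabla\mathbf{u})$, leaving $\tfrac12\tfrac{d}{dt}\|\mathbf{u}\|_{L^2}^2+\tfrac1{Re}\|\nabla\mathbf{u}\|_{L^2}^2=-M(\bm{\sigma},\nabla\mathbf{u})-Co(\sum_i z_ic_i\nabla V,\mathbf{u})$. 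Testing \eqref{non-concentration} with $Co\,g_i$ and summing over $i$: $(\partial_t c_i,\log c_i)=\tfrac{d}{dt}\int_\Omega c_i(\log c_i-1)$; the diffusion term integrates by parts to $-\tfrac{Co}{Pe}\sum_i\int_\Omega c_i|\nabla g_i|^2\le0$ (using positivity of the concentrations and the no-flux conditions, so the boundary flux $c_i\nabla g_i\cdot\mathbf{n}$ vanishes); and the transport term, after integration by parts with $\nabla\cdot\mathbf{u}=0$, reduces to $-Co(\mathbf{u},\sum_i z_ic_i\nabla V)$, which will cancel the Coulomb forcing from the momentum balance. Finally, time-differentiating \eqref{non-Possion} and testing with $Co\,V$ gives $\tfrac{Co\lambda}{2}\tfrac{d}{dt}\|\nabla V\|_{L^2}^2=Co\sum_i z_i(\partial_t c_i,V)$, precisely the term produced by the $z_iV$-part of $g_i$ in the previous step.

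The conformation equation is the crux. Using Lemma~2 (identities \eqref{lemma2.1}--\eqref{lemma2.2}), applied to $\partial_t$ and, by the same algebraic reasoning, to the directional derivative $\mathbf{u}\cdot\nabla$, one rewrites $(\partial_t\bm{\sigma}):(\mathbf{I}-\bm{\sigma}^{-1})=\partial_t\tr(\bm{\sigma}-\ln\bm{\sigma}-\mathbf{I})$ and $((\mathbf{u}\cdot\nabla)\bm{\sigma}):(\mathbf{I}-\bm{\sigma}^{-1})=\mathbf{u}\cdot\nabla\tr(\bm{\sigma}-\ln\bm{\sigma}-\mathbf{I})$; integrating, the first yields $\tfrac{M}{2}\tfrac{d}{dt}\int_\Omega\tr(\bm{\sigma}-\ln\bm{\sigma}-\mathbf{I})$ and the second vanishes by $\nabla\cdot\mathbf{u}=0$ and $\mathbf{u}|_{\partial\Omega}=\mathbf{0}$. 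For the upper-convected terms, cyclicity of the trace together with $\tr(\nabla\mathbf{u})=\nabla\cdot\mathbf{u}=0$ gives $(\nabla\mathbf{u}\cdot\bm{\sigma}+\bm{\sigma}\cdot\nabla\mathbf{u}^T):\bm{\sigma}^{-1}=0$, while $(\nabla\mathbf{u}\cdot\bm{\sigma}+\bm{\sigma}\cdot\nabla\mathbf{u}^T):\mathbf{I}=2\,\bm{\sigma}:\nabla\mathbf{u}$, so this group contributes exactly $M(\bm{\sigma},\nabla\mathbf{u})$, cancelling the elastic stress term left over from the previous step. The relaxation term becomes $\tfrac{M}{2Wi}(\mathbf{I}-\bm{\sigma}):(\mathbf{I}-\bm{\sigma}^{-1})=-\tfrac{M}{2Wi}\tr(\bm{\sigma}+\bm{\sigma}^{-1}-2\mathbf{I})\le0$ by \eqref{lemma1.2}, and the diffusion term, integrated by parts with $\partial\bm{\sigma}/\partial\mathbf{n}=\mathbf{0}$, becomes $\tfrac{M\kappa_1}{2}\int_\Omega\nabla\bm{\sigma}:\nabla\bm{\sigma}^{-1}\le0$ by \eqref{lemma1.5}.

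Summing the four tested identities, every coupling term cancels and one is left with
\begin{align*}
\frac{d}{dt}E=-\frac{1}{Re}\|\nabla\mathbf{u}\|_{L^2}^2-\frac{Co}{Pe}\sum_i\int_\Omega c_i|\nabla g_i|^2\,d\mathbf{x}-\frac{M}{2Wi}\int_\Omega\tr(\bm{\sigma}+\bm{\sigma}^{-1}-2\mathbf{I})\,d\mathbf{x}+\frac{M\kappa_1}{2}\int_\Omega\nabla\bm{\sigma}:\nabla\bm{\sigma}^{-1}\,d\mathbf{x},
\end{align*}
where $E$ is the free energy in \eqref{continuous-free-energy}; the four terms on the right are non-positive by, respectively, $Re>0$, positivity of the concentrations, \eqref{lemma1.2}, and \eqref{lemma1.5}, which gives \eqref{continuous-free-energy}. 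The main obstacle, as I see it, is the conformation block: one must first take for granted that $\bm{\sigma}$ stays symmetric positive definite along the flow — this is where the hypothesis on $\bm{\sigma}_0$ enters and it is what makes $\ln\bm{\sigma}$, $\bm{\sigma}^{-1}$, and Lemmas~1--2 meaningful — then recognize that $\tfrac{M}{2}(\mathbf{I}-\bm{\sigma}^{-1})$ is the correct multiplier, and carefully push the tensor algebra through so that the objective-derivative terms collapse to exactly $2\,\bm{\sigma}:\nabla\mathbf{u}$; the remaining manipulations are routine integrations by parts and sign-chasing.
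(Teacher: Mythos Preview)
Your proposal is correct and follows essentially the same approach as the paper: the same four test functions ($\mathbf{u}$, $Co\,g_i$, $Co\,V$ against the time-differentiated Poisson equation, and $\mathbf{I}-\bm{\sigma}^{-1}$ against the conformation equation), the same cancellations of the Coulomb and elastic coupling terms, and the same appeal to \eqref{lemma1.2} and \eqref{lemma1.5} for the sign of the relaxation and stress-diffusion contributions. The only cosmetic difference is that the paper handles the conformation equation in two passes---first taking the trace (i.e., contracting with $\mathbf{I}$) and then contracting with $\bm{\sigma}^{-1}$ before subtracting---whereas you contract directly with $\tfrac{M}{2}(\mathbf{I}-\bm{\sigma}^{-1})$; the computations are identical.
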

\begin{proof}
Differentiating \eqref{non-Possion} with respect to time, and taking the $L^2$ inner product with $CoV$, we obtain
\begin{align}\label{non-Possion-1}
\frac{d}{dt}\int_{\Omega} \frac{Co}{2}\lambda|\nabla V|^2 d\mathbf{x}= 
-Co\Big((\nabla\cdot(\lambda\nabla V))_t,V\Big) = Co(\sum\limits_{i}z_i\partial_tc_i,V).
\end{align}
Taking the $L^2$ inner product of \eqref{non-concentration} with $Co(\log c_i + z_i V)$ and taking the summation for $i$ to get
\begin{align}\label{non-concentration-1}
&\frac{d}{dt}\left( \int_{\Omega}\sum\limits_{i}Coc_i(\log c_i-1)d\mathbf{x} \right)+ \sum\limits_{i}Co(z_iV,\partial_tc_i)\nonumber\\
&=\int_{\Omega} \sum\limits_{i}Coz_i c_i\mathbf{u}\cdot \nabla V d\mathbf{x} -\frac{Co}{Pe}\sum\limits_{i}\int_{\Omega} c_i|\nabla (\log c_i + z_i V)|^2 d\mathbf{x}.
\end{align}
Taking the $L^2$ inner product of \eqref{non-NS} with $\mathbf{u}$, and using \eqref{non-incompressible} and integration by parts, we have
\begin{equation}\label{non-NS-1}
\frac{d}{dt}\int_{\Omega}\frac{1}{2}|\mathbf{u}|^2d\mathbf{x} = -\frac{1}{Re}\|\nabla\mathbf{u}\|^2 - Co\left(\sum\limits_{i}z_ic_i\nabla V,\mathbf{u}\right)- M\int_{\Omega} \nabla \mathbf{u}: \bm{\sigma} d\mathbf{x}.
\end{equation}
Taking the trace of the evolution equation \eqref{non-Oldroyd-B} for the conformation tensor, we have
\begin{align}\label{non-Oldroyd-B-1}
\frac{d}{dt}\int_{\Omega}\tr(\bm{\sigma})d\mathbf{x} = 2 \int_{\Omega}\nabla \mathbf{u}:\bm{\sigma}d\mathbf{x} - \frac{1}{Wi}\int_{\Omega}\tr(\bm{\sigma}-\mathbf{I})d\mathbf{x}.
\end{align}
Recall from~\cite{boyaval2009free,hulsen1990sufficient}, the matrix $\bm{\sigma}$ is positive definite under the assumption that the initial condition $\bm{\sigma}_0$ is symmetric positive definite, thus the matrix $\bm{\sigma}^{-1}$ exists. Contracting the evolution equation for $\bm{\sigma}$ with $\bm{\sigma}^{-1}$, we obtain
\begin{align}\label{non-Oldroyd-B-2}
\int_{\Omega}(\partial_t\bm{\sigma}+ (\mathbf{u}\cdot\nabla)\bm{\sigma}): \bm{\sigma}^{-1}d\mathbf{x} &= 2\int_{\Omega} \tr(\nabla\mathbf{u}) d\mathbf{x} - \frac{1}{Wi} \int_{\Omega}\tr(\mathbf{I}-\bm{\sigma}^{-1})d\mathbf{x}\nonumber \\
&\quad - \kappa_1\int_{\Omega} \nabla\bm{\sigma}:\nabla\bm{\sigma}^{-1} d\mathbf{x}.
\end{align}
Using \eqref{lemma2.2} with $\bm{\sigma}$, we have
\begin{align}\label{non-Oldroyd-B-3}
\int_{\Omega}(\partial_t\bm{\sigma}+ (\mathbf{u}\cdot\nabla)\bm{\sigma}): \bm{\sigma}^{-1}d\mathbf{x} = \int_{\Omega} (\partial_t +\mathbf{u}\cdot\nabla) \tr(\ln\bm{\sigma}) d\mathbf{x}.
\end{align}
Combining \eqref{non-Oldroyd-B-3} with \eqref{non-Oldroyd-B-2} and using $\tr(\nabla \mathbf{u})=\nabla\cdot \mathbf{u}=0$ and $\mathbf{u}|_{\partial\Omega}=0$, we can arrive at
\begin{align}\label{non-Oldroyd-B-4}
\frac{d}{dt}\int_{\Omega} \tr(\ln\bm{\sigma}) d\mathbf{x} = \frac{1}{Wi} \int_{\Omega}\tr(\bm{\sigma}^{-1}-\mathbf{I})d\mathbf{x} - \kappa_1\int_{\Omega} \nabla\bm{\sigma}:\nabla\bm{\sigma}^{-1} d\mathbf{x}.
\end{align}
Substracting \eqref{non-Oldroyd-B-4} from \eqref{non-Oldroyd-B-1}, we find
\begin{align}\label{non-Oldroyd-B-5}
\frac{d}{dt}\int_{\Omega} \tr(\bm{\sigma}-\ln\bm{\sigma}) d\mathbf{x}&= -\frac{1}{Wi} \int_{\Omega}\tr(\bm{\sigma}^{-1} + \bm{\sigma}- 2\mathbf{I})d\mathbf{x} + 2 \int_{\Omega}\nabla \mathbf{u}:\bm{\sigma}d\mathbf{x} \nonumber\\
&\quad + \kappa_1\int_{\Omega} \nabla\bm{\sigma}:\nabla\bm{\sigma}^{-1} d\mathbf{x}.
\end{align}
Multiplying \eqref{non-Oldroyd-B-5} by $\frac{M}{2}$, we combine the result equation with \eqref{non-Possion-1}-\eqref{non-NS-1} to obtain
\begin{align}\label{1111}
&\frac{d}{dt}\int_{\Omega} \left(\frac{1}{2}|\mathbf{u}|^2 + Co\sum\limits_{i}c_i(\log c_i-1) + \frac{Co}{2}\lambda|\nabla V|^2 
+ \frac{M}{2}\tr(\bm{\sigma}-\ln\bm{\sigma})\right) d\mathbf{x}\\
&\leq -\frac{1}{Re}\|\nabla\mathbf{u}\|^2 - \frac{Co}{Pe}\sum\limits_{i}\int_{\Omega} c_i|\nabla (\log c_i + z_i V)|^2 d\mathbf{x}\nonumber\\
&\quad -\frac{M}{2Wi} \int_{\Omega}\tr(\bm{\sigma}^{-1} + \bm{\sigma}- 2\mathbf{I})d\mathbf{x} + \kappa_1\int_{\Omega} \nabla\bm{\sigma}:\nabla\bm{\sigma}^{-1} d\mathbf{x}.
\end{align}
By using \eqref{lemma1.2} and \eqref{lemma1.5}, we have $-\frac{M}{2Wi} \int_{\Omega}\tr(\bm{\sigma}^{-1} + \bm{\sigma}- 2\mathbf{I})d\mathbf{x} + \kappa_1\int_{\Omega} \nabla\bm{\sigma}:\nabla\bm{\sigma}^{-1} d\mathbf{x}\leq 0$, which implies the desired energy dissipation law \eqref{continuous-free-energy}.
\end{proof}

\section{Reformulation}

\subsection{Logarithmic transformation of the concentration}
Let $c_i=\exp{(\eta_i)}$, then $\eta_i$ satisfies the following equation:
\begin{align}\label{non-sqrt-concentration}
\partial_t\eta_i + \mathbf{u}\cdot\nabla \eta_i = \frac{1}{Pe}(|\nabla \eta_i|^2 + \Delta \eta_i + z_i\Delta V + z_i\nabla\eta_i\cdot\nabla V).
\end{align}

\subsection{Logarithmic transformation of the conformation tensor}
In this paper, we only consider $2\times2$ tensor $\bm{\sigma}$. Since the conformation tensor $\bm{\sigma}$ is symmetric and positive definite, it can be diagonalized as:
\begin{align*}
\bm{\sigma}=\left(
              \begin{array}{cc}
                \sigma_{11} & \sigma_{12} \\
                \sigma_{12} & \sigma_{22} \\
              \end{array}
            \right)
 = \mathbf{R}\Lambda\mathbf{R}^{T},
\end{align*}
where $\mathbf{R}$ is an orthogonal matrix whose columns are the eigenvectors of $\bm{\sigma}$ and $\Lambda$ is a diagonal matrix containing all eigenvalues of $\bm{\sigma}$. Thus, the natural logarithm of the conformation tensor can be defined as:
\begin{align*}
\bm{\psi} = \left(
              \begin{array}{cc}
                \psi_{11} & \psi_{12} \\
                \psi_{12} & \psi_{22} \\
              \end{array}
            \right)= \log\bm{\sigma} = \mathbf{R}(\log\Lambda)\mathbf{R}^{T}
\end{align*}
\begin{lemma}
For any matrix $\nabla \mathbf{u}$ and any symmetric positive definite matrix $\bm{\sigma}\in \mathbb{R}^{d\times d}$ , there exist two antisymmetric matrices $\mathbf{\Omega},\mathbf{N}\in \mathbb{R}^{d\times d}$ and a symmetric matrix $\mathbf{B}$ that commutes with $\bm{\sigma}$, such that:
\begin{align}\label{nabla-u}
\nabla \mathbf{u} = \mathbf{\Omega} + \mathbf{B} +\mathbf{N}\bm{\sigma}^{-1}.
\end{align}
\end{lemma}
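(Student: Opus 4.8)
The plan is to reduce to the case where $\bm{\sigma}$ is diagonal and then build the three matrices entry by entry. Since $\bm{\sigma}$ is symmetric positive definite, write $\bm{\sigma}=\mathbf{R}\Lambda\mathbf{R}^{T}$ with $\mathbf{R}$ orthogonal and $\Lambda=\mathrm{diag}(\lambda_1,\dots,\lambda_d)$, all $\lambda_k>0$, and put $\mathbf{M}=\mathbf{R}^{T}(\nabla\mathbf{u})\mathbf{R}$. If we can produce antisymmetric matrices $\mathbf{\Omega}',\mathbf{N}'$ and a symmetric matrix $\mathbf{B}'$ commuting with $\Lambda$ such that $\mathbf{M}=\mathbf{\Omega}'+\mathbf{B}'+\mathbf{N}'\Lambda^{-1}$, then conjugating by $\mathbf{R}$ gives the lemma with $\mathbf{\Omega}=\mathbf{R}\mathbf{\Omega}'\mathbf{R}^{T}$, $\mathbf{B}=\mathbf{R}\mathbf{B}'\mathbf{R}^{T}$, $\mathbf{N}=\mathbf{R}\mathbf{N}'\mathbf{R}^{T}$: orthogonal conjugation preserves symmetry and antisymmetry, it carries ``$\mathbf{B}'$ commutes with $\Lambda$'' to ``$\mathbf{B}$ commutes with $\bm{\sigma}$'', and $\mathbf{R}\mathbf{N}'\Lambda^{-1}\mathbf{R}^{T}=(\mathbf{R}\mathbf{N}'\mathbf{R}^{T})(\mathbf{R}\Lambda^{-1}\mathbf{R}^{T})=\mathbf{N}\bm{\sigma}^{-1}$.

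It then remains to solve the diagonal case entrywise. On the diagonal, $\mathbf{\Omega}'_{kk}=\mathbf{N}'_{kk}=0$, forcing $\mathbf{B}'_{kk}=\mathbf{M}_{kk}$. Fix an off-diagonal pair $i\neq j$ and use that antisymmetry ties the $(j,i)$ entries to the $(i,j)$ ones. If $\lambda_i\neq\lambda_j$, the requirement that $\mathbf{B}'$ commute with $\Lambda$ forces $\mathbf{B}'_{ij}=0$, and the $(i,j)$ and $(j,i)$ components of $\mathbf{M}=\mathbf{\Omega}'+\mathbf{B}'+\mathbf{N}'\Lambda^{-1}$ become
\begin{align*}
\mathbf{M}_{ij}=\mathbf{\Omega}'_{ij}+\mathbf{N}'_{ij}/\lambda_j,\qquad \mathbf{M}_{ji}=-\mathbf{\Omega}'_{ij}-\mathbf{N}'_{ij}/\lambda_i,
\end{align*}
a $2\times2$ linear system for $(\mathbf{\Omega}'_{ij},\mathbf{N}'_{ij})$ whose determinant is $1/\lambda_j-1/\lambda_i\neq0$; it has the unique solution $\mathbf{N}'_{ij}=\lambda_i\lambda_j(\mathbf{M}_{ij}+\mathbf{M}_{ji})/(\lambda_i-\lambda_j)$ and $\mathbf{\Omega}'_{ij}=\mathbf{M}_{ij}-\mathbf{N}'_{ij}/\lambda_j$. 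A quick check shows these are consistent with $\mathbf{N}'_{ji}=-\mathbf{N}'_{ij}$ and $\mathbf{\Omega}'_{ji}=-\mathbf{\Omega}'_{ij}$, so the matrices are genuinely antisymmetric. If $\lambda_i=\lambda_j$, I would instead place the symmetric part into $\mathbf{B}'$ and the antisymmetric part into $\mathbf{\Omega}'$: take $\mathbf{N}'_{ij}=0$, $\mathbf{B}'_{ij}=\tfrac12(\mathbf{M}_{ij}+\mathbf{M}_{ji})$, $\mathbf{\Omega}'_{ij}=\tfrac12(\mathbf{M}_{ij}-\mathbf{M}_{ji})$; since $\mathbf{B}'$ is then constant on each eigenspace of $\Lambda$, it commutes with $\Lambda$. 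Collecting these choices over all pairs yields $\mathbf{\Omega}',\mathbf{B}',\mathbf{N}'$ with the stated structure, and transforming back completes the argument. In the two-dimensional setting of the paper there is only the single pair $(1,2)$, so the construction collapses to the two displayed formulas (or, when $\lambda_1=\lambda_2$, their degenerate analogue).

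The only genuinely delicate point is the repeated-eigenvalue case: there the decomposition is not unique and $\mathbf{B}$ need not be diagonal, so one must check that ``commutes with $\bm{\sigma}$'' is indeed the right replacement for ``diagonal'' and that the split into symmetric and antisymmetric parts respects antisymmetry of $\mathbf{\Omega}$ and $\mathbf{N}$. Everything else — the entrywise solve and the verification that orthogonal conjugation leaves symmetry, antisymmetry and the commutation relation intact — is routine linear algebra.
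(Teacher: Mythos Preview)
Your argument is correct and follows the same route as the paper: diagonalize $\bm{\sigma}$, pass to $\mathbf{M}=\mathbf{R}^{T}(\nabla\mathbf{u})\mathbf{R}$, read off $\mathbf{B}'$ from the diagonal of $\mathbf{M}$, and solve for the off-diagonal antisymmetric pieces before conjugating back. Your formulas $\mathbf{N}'_{12}=\lambda_1\lambda_2(\mathbf{M}_{12}+\mathbf{M}_{21})/(\lambda_1-\lambda_2)$ and $\mathbf{\Omega}'_{12}=\mathbf{M}_{12}-\mathbf{N}'_{12}/\lambda_2$ coincide with the paper's $n=(m_{12}+m_{21})/(\sigma_2^{-1}-\sigma_1^{-1})$ and $\omega=(\sigma_2 m_{12}+\sigma_1 m_{21})/(\sigma_2-\sigma_1)$.

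The only real difference is scope and rigor: the paper simply writes down the $2\times2$ recipe (and, for the degenerate case, only treats $\bm{\sigma}$ proportional to $\mathbf{I}$, setting $\mathbf{\Omega}=0$, $\mathbf{B}=\mathbb{D}\mathbf{u}$, $\mathbf{N}=\tfrac12(\nabla\mathbf{u}-\mathbb{D}\mathbf{u})\tr\bm{\sigma}$), whereas you carry out the construction in arbitrary dimension, handle each repeated-eigenvalue pair locally via the symmetric/antisymmetric split, and explicitly verify that orthogonal conjugation transports antisymmetry and the commutation relation. Your treatment is therefore a strict generalization of what the paper presents, with the same underlying idea.
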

Subsequently, we will present how to get this decomposition in two dimensions. If $\bm{\sigma}$ is proportional to the unit tensor, we then simply set $ \mathbf{\Omega}=0$, $\mathbf{B}=\mathbb{D} \mathbf{u}$, $N=\frac{1}{2}(\nabla \mathbf{u}-\mathbb{D} \mathbf{u})\tr \bm{\sigma}$. Otherwise, we get the decomposition in the following process:
\begin{itemize}
  \item Calculate the diagonalizing transformation:\\
  $\left(
           \begin{array}{cc}
             \sigma_1 & 0 \\
             0 & \sigma_2 \\
           \end{array}
         \right)=\mathbf{R}^{T}\bm{\sigma} \mathbf{R}$.
  \item Calculating an intermediate matrix:\\
  $\left(
           \begin{array}{cc}
             m_{11} & m_{12} \\
             m_{21} & m_{22} \\
           \end{array}
         \right)=\mathbf{R}^{T}(\nabla\mathbf{u})\mathbf{R}.$
  \item Tensor $\mathbf{B}$, $\mathbf{N}$ and $\mathbf{\Omega}$ are assembled as:\\
   $\mathbf{B} = \mathbf{R}\left(
           \begin{array}{cc}
             m_{11} & 0 \\
             0 & m_{22} \\
           \end{array}
         \right)\mathbf{R}^{T}$,
  $\mathbf{N} = \mathbf{R}\left(
           \begin{array}{cc}
             0 & n \\
             -n & 0 \\
           \end{array}
         \right)\mathbf{R}^{T}$,
  $\mathbf{\Omega} = \mathbf{R}\left(
           \begin{array}{cc}
             0 & \omega \\
             -\omega & 0 \\
           \end{array}
         \right)\mathbf{R}^{T}$,\\
  where $n=(m_{12}+m_{21})/(\sigma_2^{-1}-\sigma_1^{-1})$, $\omega=(\sigma_2m_{12}+\sigma_1m_{21})/(\sigma_2-\sigma_1)$.
\end{itemize}
The evolution equation for the logarithm of the conformation tensor is given as follows~\cite{fattal2004constitutive, hulsen2005flow}:
\begin{align}\label{log-conformation}
\partial_t\bm{\psi} + (\mathbf{u}\cdot\nabla) \bm{\psi} - (\mathbf{\Omega}\bm{\psi} - \bm{\psi}\mathbf{\Omega}) - 2\mathbf{B} =  \frac{1}{Wi} (e^{-{\bm{\psi}}}-\mathbf{I}).
\end{align}
The diffusive type Oldroyd-B model is
\begin{subequations}\label{non-model-sqrt}
\begin{align}
&\partial_t\eta_i + \mathbf{u}\cdot\nabla \eta_i = \frac{1}{Pe}(|\nabla \eta_i|^2 + \Delta \eta_i + z_i\Delta V + z_i\nabla\eta_i\cdot\nabla V),\label{non-concentration-sqrt} \\
&c_i=\exp{(\eta_i)},\label{non-sqrtconcentration-sqrt}\\
&-\lambda\Delta V = \sum\limits_{i}z_ic_i,\label{non-Possion-sqrt}\\
&\partial_t\mathbf{u} + (\mathbf{u}\cdot\nabla)\mathbf{u}  - \frac{1}{Re} \Delta\mathbf{u} + \nabla p = M\nabla\cdot e^{{\bm{\psi}}} -Co\sum\limits_{i}z_ic_i\nabla V,\label{non-NS-sqrt} \\
&\nabla\cdot \mathbf{u}=0,\label{non-incompressible-sqrt} \\
&\partial_t\bm{\psi} + (\mathbf{u}\cdot\nabla) \bm{\psi} - (\mathbf{\Omega}\bm{\psi} - \bm{\psi}\mathbf{\Omega}) - 2\mathbf{B} =  \frac{1}{Wi} (e^{-{\bm{\psi}}}-\mathbf{I}) + \kappa \Delta \bm{\psi},\label{non-Oldroyd-B-sqrt}
\end{align}
\end{subequations}
where $\kappa>0$ is a nondimensional diffusive parameter.

\subsection{Auxiliary variable reformulation}
We define a nonlocal variable $r(t)$ such that
\begin{align}\label{r-equation}
r(t) = \sqrt{E_{P} + B},
\end{align}
where $E_{P}=\int_{\Omega} \frac{Co}{2}\lambda|\nabla V|^2 d\mathbf{x} + \int_{\Omega}\sum\limits_{i}Coc_i(\log c_i-1)d\mathbf{x} +\int_{\Omega} \frac{M}{2}\tr(e^{{\bm{\psi}}}-\bm{\psi}-\mathbf{I}) d\mathbf{x} $, $B$ is a positive constant to guarantee the radicand is always positive. Note that electric field energy $\int_{\Omega} \frac{Co}{2}\lambda|\nabla V|^2 d\mathbf{x}$, entropic contributions $\int_{\Omega}\sum\limits_{i}Coc_i(\log c_i-1)d\mathbf{x}$ and $\int_{\Omega} \frac{M}{2}\tr(e^{{\bm{\psi}}}-\bm{\psi}-\mathbf{I}) d\mathbf{x}$ are convex functions, we can always find such a constant $B$ since $E_{P}$ are bounded from below.  We defined
\begin{align*}
\xi(t)=\frac{r(t)}{\sqrt{E_{P} + B}}.
\end{align*}
It is easy to see that $\xi(t)=1$. To obtain the evolution equation for the auxiliary variable $r$, we derive $\frac{dE_{P}}{dt}$ firstly. Taking the trace of \eqref{non-Oldroyd-B-sqrt} for the log-conformation tensor, we get
\begin{align}\label{non-Oldroyd-B-sqrt-1}
\frac{d}{dt}\int_{\Omega}\tr(\bm{\psi})d\mathbf{x} = \frac{1}{Wi}\int_{\Omega}\tr(e^{-{\bm{\psi}}}-\mathbf{I})d\mathbf{x}.
\end{align}
Contracting the evolution equation \eqref{non-Oldroyd-B-sqrt} with $e^{{\bm{\psi}}}$, we obtain
\begin{align}\label{non-Oldroyd-B-sqrt-2}
&\frac{d}{dt}\int_{\Omega} \tr(e^{{\bm{\psi}}}) d\mathbf{x}  - \int_{\Omega}(\mathbf{\Omega}\bm{\psi} - \bm{\psi}\mathbf{\Omega}): e^{{\bm{\psi}}} d\mathbf{x} - 2\int_{\Omega} \mathbf{B}: e^{{\bm{\psi}}} d\mathbf{x} \\
&= -\frac{1}{Wi}\int_{\Omega} \tr( e^{{\bm{\psi}}}-\mathbf{I} ) d\mathbf{x} - \kappa\int_{\Omega} \nabla \bm{\psi}:\nabla e^{{\bm{\psi}}} d\mathbf{x}.
\end{align}
Using the decomposition \eqref{nabla-u} of $\nabla \mathbf{u}$, the symmetry of $e^{{\bm{\psi}}}$ and the antisymmetry of $\mathbf{\Omega}$ and $\mathbf{N}$, we have
\begin{align}\label{nabla-u-1}
\nabla \mathbf{u}: e^{{\bm{\psi}}} = \mathbf{\Omega}: e^{{\bm{\psi}}} + \mathbf{B}: e^{{\bm{\psi}}} +\mathbf{N}e^{-{\bm{\psi}}}: e^{{\bm{\psi}}} = \mathbf{B}: e^{{\bm{\psi}}}.
\end{align}
Since $\bm{\psi}$ and $e^{{\bm{\psi}}}$ commute, we have
\begin{align}\label{commute}
(\mathbf{\Omega}\bm{\psi} - \bm{\psi}\mathbf{\Omega}): e^{{\bm{\psi}}} = \tr(\mathbf{\Omega}\bm{\psi}e^{{\bm{\psi}}})-\tr(\bm{\psi}\mathbf{\Omega}e^{{\bm{\psi}}})=0.
\end{align}
Then, \eqref{non-Oldroyd-B-sqrt-2} can simplify as
\begin{align}\label{non-Oldroyd-B-sqrt-3}
\frac{d}{dt}\int_{\Omega}\tr(e^{{\bm{\psi}}})d\mathbf{x} = 2 \int_{\Omega}\nabla \mathbf{u}:e^{{\bm{\psi}}} d\mathbf{x} - \frac{1}{Wi}\int_{\Omega}\tr(e^{{\bm{\psi}}}-\mathbf{I})d\mathbf{x}- \kappa\int_{\Omega} \nabla \bm{\psi}:\nabla e^{{\bm{\psi}}} d\mathbf{x}.
\end{align}
Subtracting $\frac{M}{2}$\eqref{non-Oldroyd-B-sqrt-1} from $\frac{M}{2}$\eqref{non-Oldroyd-B-sqrt-3}, we have
\begin{align}\label{non-Oldroyd-B-sqrt-4}
\frac{d}{dt}\frac{M}{2}\int_{\Omega} \tr(e^{{\bm{\psi}}}-\bm{\psi}) d\mathbf{x}&= -\frac{M}{2Wi} \int_{\Omega}\tr(e^{{\bm{\psi}}}+ e^{-{\bm{\psi}}}- 2\mathbf{I})d\mathbf{x} \\
&\quad +  M\int_{\Omega}\nabla \mathbf{u}:e^{{\bm{\psi}}} d\mathbf{x} - \frac{\kappa M}{2}\int_{\Omega} \nabla \bm{\psi}:\nabla e^{{\bm{\psi}}} d\mathbf{x}.
\end{align}
Combining \eqref{non-Possion-1}, \eqref{non-concentration-1} and \eqref{non-Oldroyd-B-sqrt-4}, we have
\begin{align}\label{continuous-free-energy-sqrt}
\frac{dE_P}{dt}&=  - \frac{Co}{Pe}\sum\limits_{i}\int_{\Omega} c_i|\nabla (\log c_i + z_i V)|^2 d\mathbf{x} + \int_{\Omega} \sum\limits_{i}Coz_i c_i\mathbf{u}\cdot \nabla V d\mathbf{x}\\
&-\frac{M}{2Wi} \int_{\Omega}\tr(e^{-{\bm{\psi}}} +e^{{\bm{\psi}}}- 2\mathbf{I})d\mathbf{x}- \frac{\kappa M}{2}\int_{\Omega} \nabla \bm{\psi}:\nabla e^{{\bm{\psi}}} d\mathbf{x}.
\end{align}
Taking derivative of $r(t)$, using \eqref{continuous-free-energy-sqrt} and adding the zero-valued term  $\int_{\Omega}(\mathbf{u}\cdot\nabla)\mathbf{u}\cdot \mathbf{u} d\mathbf{x}$, the associated ordinary differential equation is
\begin{align}\label{dr-equation}
\frac{dr}{dt}&= \frac{1}{2\sqrt{E_{P} + B}}\Big(\frac{-r}{\sqrt{E_{P} + B}}\frac{Co}{Pe}\sum\limits_{i}\int_{\Omega}  c_i|\nabla (\log c_i + z_i V)|^2 d\mathbf{x}
+ \int_{\Omega} \sum\limits_{i}Coz_i c_i\mathbf{u}\cdot \nabla V d\mathbf{x}\\
&-\frac{M}{2Wi}\frac{r}{\sqrt{E_{P} + B}} \int_{\Omega}\tr(e^{{\bm{\psi}}} + e^{-{\bm{\psi}}}- 2\mathbf{I})d\mathbf{x} +  M\int_{\Omega}\nabla \mathbf{u}:e^{{\bm{\psi}}} d\mathbf{x} \\
&+ \int_{\Omega}(\mathbf{u}\cdot\nabla)\mathbf{u}\cdot \mathbf{u} d\mathbf{x} - \frac{\kappa M}{2}\frac{r}{\sqrt{E_{P} + B}}\int_{\Omega} \nabla \bm{\psi}:\nabla e^{{\bm{\psi}}} d\mathbf{x}\Big).
\end{align}
The initial condition is given by
\begin{align*}
r|_{t=0}=( \int_{\Omega} \frac{1}{2}\lambda|\nabla V_0|^2 d\mathbf{x} + \int_{\Omega} \sum\limits_{i}c_{i0}(\log c_{i0}-1) d\mathbf{x} +  \int_{\Omega} \frac{M}{2}\tr(e^{{\bm{\psi}}_0} - \bm{\psi}_0 - \mathbf{I}) d\mathbf{x}
+ B )^{\frac{1}{2}}.
\end{align*}
The system \eqref{non-model-sqrt} then rewrites as
\begin{subequations}\label{equivalent-non-model-sqrt}
\begin{align}
&\partial_t\eta_i + \mathbf{u}\cdot\nabla \eta_i = \frac{1}{Pe}(|\nabla \eta_i|^2 + \Delta \eta_i + z_i\Delta V + z_i\nabla\eta_i\cdot\nabla V)
,\label{equivalent-non-concentration-sqrt} \\
&c_i=\exp{(\eta_i)},\label{equivalent-non-sqrtconcentration-sqrt}\\
&-\lambda\Delta V = \sum\limits_{i}z_ic_i,\label{equivalent-non-Possion-sqrt}\\
&\partial_t\mathbf{u} + \frac{r}{\sqrt{E_{P} + B}}(\mathbf{u}\cdot\nabla)\mathbf{u}  - \frac{1}{Re} \Delta\mathbf{u} + \nabla p = \frac{rM}{\sqrt{E_{P} + B}}\nabla\cdot e^{{\bm{\psi}}} -\frac{r Co}{\sqrt{E_{P} + B}}\sum\limits_{i}z_ic_i\nabla V,\label{equivalent-non-NS-sqrt} \\
&\nabla\cdot \mathbf{u}=0,\label{equivalent-non-incompressible-sqrt} \\
&\partial_t\bm{\psi} + (\mathbf{u}\cdot\nabla) \bm{\psi} - (\mathbf{\Omega}\bm{\psi} - \bm{\psi}\mathbf{\Omega}) - 2\mathbf{B} =  \frac{1}{Wi} (e^{-{\bm{\psi}}}-\mathbf{I})+\kappa \Delta \bm{\psi},\label{equivalent-non-Oldroyd-B-sqrt}\\
& \frac{dr}{dt}= \frac{1}{2\sqrt{E_{P} + B}}\Big(\frac{-r(t)}{\sqrt{E_{P} + B}}\frac{Co}{Pe}\sum\limits_{i}\int_{\Omega}  c_i|\nabla (\log c_i + z_i V)|^2 d\mathbf{x} + \int_{\Omega} \sum\limits_{i}Coz_i c_i\mathbf{u}\cdot \nabla V d\mathbf{x} \nonumber\\
&-\frac{M}{2Wi}\frac{r}{\sqrt{E_{P} + B}} \int_{\Omega}\tr(e^{{\bm{\psi}}} + e^{-{\bm{\psi}}}- 2\mathbf{I})d\mathbf{x} +  M\int_{\Omega}\nabla \mathbf{u}:e^{{\bm{\psi}}} d\mathbf{x} \nonumber\\
&+ \int_{\Omega}(\mathbf{u}\cdot\nabla)\mathbf{u}\cdot \mathbf{u} d\mathbf{x}  - \frac{\kappa M}{2}\frac{r}{\sqrt{E_{P} + B}}\int_{\Omega} \nabla \bm{\psi}:\nabla e^{{\bm{\psi}}} d\mathbf{x}\Big).\label{equivalent-non-r-sqrt}
\end{align}
\end{subequations}
\begin{theorem}
Assume that the system \eqref{equivalent-non-model-sqrt} is supplied with symmetric positive definite initial condition $\bm{\psi}_0$. The free energy satisfies:
\begin{align}\label{equivalent-free-energy-sqrt}
\frac{dE(\mathbf{u},r)}{dt}&=-\frac{1}{Re}\|\nabla\mathbf{u}\|^2 -\frac{Co}{Pe}\left|\frac{r}{\sqrt{E_{P} + B}}\right|^2\sum\limits_{i}\int_{\Omega} c_i|\nabla (\log c_i + z_i V)|^2 d\mathbf{x}\nonumber\\
&-\frac{M}{2Wi}\left|\frac{r}{\sqrt{E_{P} + B}}\right|^2 \int_{\Omega}\tr(e^{{\bm{\psi}}} + e^{-{\bm{\psi}}}- 2\mathbf{I})d\mathbf{x}\\
&- \frac{\kappa M}{2}\left|\frac{r}{\sqrt{E_{P} + B}}\right|^2\int_{\Omega} \nabla \bm{\psi}:\nabla e^{{\bm{\psi}}} d\mathbf{x},
\end{align}
where the total energy $E$ is defined as
\begin{align}\label{equivalent-E-sqrt}
E(\mathbf{u},r) = \int_{\Omega}\frac{1}{2}|\mathbf{u}|^2d\mathbf{x} + r^2 .
\end{align}
\end{theorem}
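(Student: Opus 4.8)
The plan is to derive \eqref{equivalent-free-energy-sqrt} by a single energy test: take the $L^2(\Omega)$ inner product of the momentum equation \eqref{equivalent-non-NS-sqrt} with $\mathbf{u}$, multiply the scalar ODE \eqref{equivalent-non-r-sqrt} by $2r$, and add the two identities. The reformulation has been constructed precisely so that all nonlinear and coupling contributions — the inertial term, the polymer‑stress divergence, and the Coulomb force — cancel in pairs, leaving only the manifestly dissipative terms.

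First I would test \eqref{equivalent-non-NS-sqrt} against $\mathbf{u}$. Using $\nabla\cdot\mathbf{u}=0$ and $\mathbf{u}|_{\partial\Omega}=\mathbf{0}$, the pressure term vanishes, $-\tfrac{1}{Re}(\Delta\mathbf{u},\mathbf{u})=\tfrac{1}{Re}\|\nabla\mathbf{u}\|^2$, and integration by parts gives $(\nabla\cdot e^{\bm\psi},\mathbf{u})=-(e^{\bm\psi},\nabla\mathbf{u})$ with no boundary term. This produces
\begin{align*}
\frac{d}{dt}\int_{\Omega}\frac{1}{2}|\mathbf{u}|^2 d\mathbf{x}
&= -\frac{1}{Re}\|\nabla\mathbf{u}\|^2
- \frac{r}{\sqrt{E_{P}+B}}\int_{\Omega}(\mathbf{u}\cdot\nabla)\mathbf{u}\cdot\mathbf{u}\, d\mathbf{x}\\
&\quad - \frac{rM}{\sqrt{E_{P}+B}}\int_{\Omega}\nabla\mathbf{u}:e^{\bm\psi}\, d\mathbf{x}
- \frac{rCo}{\sqrt{E_{P}+B}}\sum_{i}\int_{\Omega}z_ic_i\nabla V\cdot\mathbf{u}\, d\mathbf{x}.
\end{align*}
Here, in contrast to the proof of the continuous energy law, I deliberately retain $\int_{\Omega}(\mathbf{u}\cdot\nabla)\mathbf{u}\cdot\mathbf{u}\, d\mathbf{x}$ rather than using its vanishing, since exactly this quantity was appended (with value zero) to the right‑hand side of \eqref{dr-equation}.

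Next I would multiply \eqref{equivalent-non-r-sqrt} by $2r$, turning its left‑hand side into $\frac{d}{dt}r^2$ and its right‑hand side into $\frac{r}{\sqrt{E_{P}+B}}$ times the bracketed expression. Adding this to the previous identity, I would match terms: the polymer‑stress pair $\mp\frac{rM}{\sqrt{E_{P}+B}}\int_{\Omega}\nabla\mathbf{u}:e^{\bm\psi}\, d\mathbf{x}$ cancels; the Coulomb pair $\mp\frac{rCo}{\sqrt{E_{P}+B}}\sum_i\int_{\Omega}z_ic_i\mathbf{u}\cdot\nabla V\, d\mathbf{x}$ cancels (using $\nabla V\cdot\mathbf{u}=\mathbf{u}\cdot\nabla V$); and the inertial pair $\mp\frac{r}{\sqrt{E_{P}+B}}\int_{\Omega}(\mathbf{u}\cdot\nabla)\mathbf{u}\cdot\mathbf{u}\, d\mathbf{x}$ cancels. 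Collecting the surviving terms and writing $\tfrac{r^2}{E_{P}+B}=\big|\tfrac{r}{\sqrt{E_{P}+B}}\big|^2$ yields exactly \eqref{equivalent-free-energy-sqrt} with $E(\mathbf{u},r)=\int_{\Omega}\frac{1}{2}|\mathbf{u}|^2 d\mathbf{x}+r^2$ as in \eqref{equivalent-E-sqrt}.

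I do not expect a real obstacle: the only care needed is the sign bookkeeping in the integration by parts for $\nabla\cdot e^{\bm\psi}$ (and the vanishing of the associated boundary term under $\mathbf{u}|_{\partial\Omega}=\mathbf{0}$), and the one‑to‑one pairing of each $\frac{r}{\sqrt{E_{P}+B}}$‑weighted term in the tested momentum equation with its companion produced from the $r$‑ODE — a pairing that \eqref{dr-equation} was designed to make exact. As a corollary one recovers dissipation: since $\bm\psi=\log\bm\sigma$ gives $\nabla\bm\psi:\nabla e^{\bm\psi}=\nabla(\log\bm\sigma):\nabla\bm\sigma\ge0$ by \eqref{lemma1.4} and $\tr(e^{\bm\psi}+e^{-\bm\psi}-2\mathbf{I})\ge0$ by \eqref{lemma1.2}, every term on the right of \eqref{equivalent-free-energy-sqrt} is $\le0$; moreover $\xi(t)\equiv1$ at the continuous level, so that right‑hand side coincides with that of \eqref{continuous-free-energy-sqrt}.
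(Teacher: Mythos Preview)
Your proposal is correct and follows essentially the same route as the paper: test the momentum equation \eqref{equivalent-non-NS-sqrt} with $\mathbf{u}$, multiply the $r$-ODE \eqref{equivalent-non-r-sqrt} by $2r$, and add so that the $\frac{r}{\sqrt{E_P+B}}$-weighted coupling terms cancel in pairs. The only cosmetic difference is that the paper drops the inertial term $\int_\Omega(\mathbf{u}\cdot\nabla)\mathbf{u}\cdot\mathbf{u}\,d\mathbf{x}$ at once (using its vanishing for divergence-free $\mathbf{u}$ with $\mathbf{u}|_{\partial\Omega}=\mathbf{0}$), whereas you retain it and cancel it against its companion from the $r$-ODE; the outcome is identical.
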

\begin{proof}
By multiplying \eqref{equivalent-non-r-sqrt} with $2r$, we obtain
\begin{align}\label{equivalent-non-r-sqrt-1}
\frac{d r^2}{dt} &= -\frac{Co}{Pe}\left|\frac{r}{\sqrt{E_{P} + B}}\right|^2\sum\limits_{i}\int_{\Omega} c_i|\nabla (\log c_i + z_i V)|^2 d\mathbf{x} + \frac{rCo}{\sqrt{E_{P} + B}}\int_{\Omega} \sum\limits_{i}z_i c_i\mathbf{u}\cdot \nabla V d\mathbf{x}\nonumber\\
&- \frac{M}{2Wi}\left|\frac{r}{\sqrt{E_{P} + B}}\right|^2 \int_{\Omega}\tr(e^{{\bm{\psi}}} + e^{-{\bm{\psi}}}- 2\mathbf{I})d\mathbf{x} +  \frac{rM}{\sqrt{E_{P} + B}}\int_{\Omega}\nabla \mathbf{u}:e^{{\bm{\psi}}} d\mathbf{x}\\
& - \frac{\kappa M}{2}\left|\frac{r}{\sqrt{E_{P} + B}}\right|^2\int_{\Omega} \nabla \bm{\psi}:\nabla e^{{\bm{\psi}}} d\mathbf{x}.
\end{align}
Taking the $L^2$ inner product of \eqref{equivalent-non-NS-sqrt} with $\mathbf{u}$ and using integration by parts and \eqref{equivalent-non-incompressible-sqrt}, we obtain
\begin{align}\label{equivalent-non-NS-sqrt-1}
\frac{d}{dt}\int_{\Omega}\frac{1}{2}|\mathbf{u}|^2d\mathbf{x} = -\frac{1}{Re}\|\nabla\mathbf{u}\|^2 - \frac{rCo}{\sqrt{E_{P} + B}}\left(\sum\limits_{i}z_ic_i\nabla V,\mathbf{u}\right)- \frac{rM}{\sqrt{E_{P} + B}} \int_{\Omega} \nabla \mathbf{u}: e^{{\bm{\psi}}} d\mathbf{x}.
\end{align}
The combination of \eqref{non-Oldroyd-B-sqrt-4}, \eqref{equivalent-non-NS-sqrt-1} and \eqref{equivalent-non-r-sqrt-1} gives
\begin{align}\label{equivalent-free-energy-sqrt-1}
\frac{d}{dt}(\int_{\Omega}\frac{1}{2}|\mathbf{u}|^2   d\mathbf{x}+ r^2)&= -\frac{1}{Re}\|\nabla\mathbf{u}\|^2 -\frac{Co}{Pe}\left|\frac{r}{\sqrt{E_{P} + B}}\right|^2\sum\limits_{i}\int_{\Omega} c_i|\nabla (\log c_i + z_i V)|^2 d\mathbf{x} \nonumber\\
&-\frac{M}{2Wi}\left|\frac{r}{\sqrt{E_{P} + B}} \right|^2\int_{\Omega}\tr(e^{{\bm{\psi}}} + e^{-{\bm{\psi}}}- 2\mathbf{I})d\mathbf{x}\\
&- \frac{\kappa M}{2}\left|\frac{r}{\sqrt{E_{P} + B}}\right|^2\int_{\Omega} \nabla \bm{\psi}:\nabla e^{{\bm{\psi}}} d\mathbf{x}.
\end{align}
This completes the proof.
\end{proof}

\section{Numerical scheme}
In this section, we present a first-order scheme for the diffusive Oldroyd-B model \eqref{equivalent-non-model-sqrt}. The two schemes are decoupled and enjoy four properties: mass conservation, positivity of concentration, positive-definiteness of the conformation tensor, energy dissipation. For the first-order scheme, the characteristic finite element is used.


The basic idea of the characteristic method is to consider the trajectory of the fluid particle and discretize the material derivative $\frac{D\bm{u}}{Dt}= \partial_t \bm{u}  + \bm{u}\cdot\nabla \bm{u}$ along the characteristic path defined by the function $\mathbf{X}^n(t^n): x\in K \mapsto \mathbf{X}^n(t,x)\in K$  is defined as
\begin{align}\label{characteristic}
\begin{cases}
&\frac{d}{dt}\mathbf{X}^n(t^n,\mathbf{x}) = \mathbf{u}^n( \mathbf{X}^n(t,\mathbf{x}) ),\quad \forall t \in [t^n,t^{n+1}],\\
&\mathbf{X}^n(t^{n+1},\mathbf{x})=\mathbf{x}.
\end{cases}
\end{align}
where $\mathbf{u}^n$ is the discrete velocity field. 
The variational formulation of system \eqref{equivalent-non-model-sqrt} reads as: find $(\eta_i,c_i,V,\mathbf{u},p,\bm{\psi})\in(Q,Q,S,\bm{X},M,\bm{V})$
such that $\forall (s_i,\varphi,\mathbf{v},q,\bm{\phi})\in(Q,Q,S,\bm{X},M,\bm{V}), t\in(0,T]$,
\begin{subequations}\label{variational-non-model-sqrt}
\begin{align}
&(\partial_t\eta_i,s_i) + (\mathbf{u}\cdot\nabla \eta_i,s_i) = \frac{1}{Pe}(|\nabla \eta_i|^2,s_i) - \frac{1}{Pe}(\nabla \eta_i,\nabla s_i) \nonumber\\
&- \frac{1}{Pe}(z_i\nabla V,\nabla s_i) + \frac{1}{Pe}(z_i\nabla\eta_i\cdot\nabla V,s_i)
,\label{variational-non-concentration-sqrt} \\
&c_i=\exp{(\eta_i)},\label{variational-non-sqrtconcentration-sqrt}\\
&(\lambda\Delta V,\nabla \varphi) = (\sum\limits_{i}z_ic_i,\varphi),\label{variational-non-Possion-sqrt}\\
&(\partial_t\mathbf{u},\bm{v}) + \frac{r}{\sqrt{E_{P} + B}}((\mathbf{u}\cdot\nabla)\mathbf{u},\bm{v})  - \frac{1}{Re} (\nabla\mathbf{u},\nabla\bm{v}) - ( p,\nabla\bm{v}) \nonumber\\
&= -\frac{rM}{\sqrt{E_{P} + B}}( e^{{\bm{\psi}}},\nabla \bm{v}) -\frac{r Co}{\sqrt{E_{P} + B}}(\sum\limits_{i}z_ic_i\nabla V,\bm{v}),\label{variational-non-NS-sqrt} \\
&(\nabla\cdot \mathbf{u},q)=0,\label{variational-non-incompressible-sqrt} \\
&(\frac{D\bm{\psi}}{Dt},\bm{\phi}) - (\mathbf{\Omega}\bm{\psi} - \bm{\psi}\mathbf{\Omega}+2\mathbf{B},\bm{\phi})  =  \frac{1}{Wi} ((e^{-{\bm{\psi}}}-\mathbf{I}),\bm{\phi}))- \kappa (\nabla \bm{\psi},\nabla\bm{\phi}),\label{variational-non-Oldroyd-B-sqrt}\\
&\frac{dr}{dt}= \frac{1}{2\sqrt{E_{P} + B}}\Big(\frac{-r(t)}{\sqrt{E_{P} + B}}\frac{Co}{Pe}\sum\limits_{i}\int_{\Omega}  c_i|\nabla (\log c_i + z_i V)|^2 d\mathbf{x} + \int_{\Omega} \sum\limits_{i}Coz_i c_i\mathbf{u}\cdot \nabla V d\mathbf{x} \nonumber\\
&-\frac{M}{2Wi}\frac{r}{\sqrt{E_{P} + B}} \int_{\Omega}\tr(e^{{\bm{\psi}}} + e^{-{\bm{\psi}}}- 2\mathbf{I})d\mathbf{x} +  M\int_{\Omega}\nabla \mathbf{u}:e^{{\bm{\psi}}} d\mathbf{x} \nonumber\\
&+ \int_{\Omega}(\mathbf{u}\cdot\nabla)\mathbf{u}\cdot \mathbf{u} d\mathbf{x}  - \frac{\kappa M}{2}\frac{r}{\sqrt{E_{P} + B}}\int_{\Omega} \nabla \bm{\psi}:\nabla e^{{\bm{\psi}}} d\mathbf{x}\Big).\label{variational-non-r-sqrt}
\end{align}
\end{subequations}

\subsection{Fully discrete schemes}
Let $0<h<1$ denote the mesh size and $K_{h}=\{K:\cup_{K\subset\Omega}\bar{K}=\bar{\Omega}\}$ be a uniform partition of $\bar{\Omega}$ into non-overlapping triangles. Given a $K_{h}$, we consider the following finite element space
\begin{align*}
&X_{h}=\left\{v\in H_0^1(\Omega)^2: v_{|K}\in P_{2}(K)^2, \forall K\in K_{h}\right\},\\
&M_{h}=\left\{q\in H^1(\Omega)\cap L_0^2(\Omega): q_{|K}\in P_{1}(K), \forall K\in K_{h}\right\},\\
&Q_{h}=\left\{\varphi\in H^1(\Omega): \varphi_{|K}\in P_{2}(K), \forall K\in K_{h}\right\},\\
& S_h = \left\{V\in H^1(\Omega)\cap L_0^2(\Omega):V_{|K}\in P_{2}(K), \forall K\in K_{h}\right\},\\
&\bm{V}_{h}=\left\{\mathbf{M}\in [H^1(\Omega)]^{2\times 2}: \mathbf{M} = \mathbf{M}^{T}, {M_{ij}}_{|K}\in P_{2}(K), \forall K\in K_{h}\right\},
\end{align*}
where $P_{i}$ is the space of piecewise polynomials of total degree no more than $i$. 
Additionally, assume that the finite element spaces $(X_{h},M_{h})$ satisfy the discrete inf-sup inequality: for each $q\in M_h$, there exists a positive constant $\beta_1>0$ such that
\begin{equation*}
\sup\limits_{\bm{u}\in X_{h},\bm{u}\neq0}\frac{(q,\nabla\cdot \bm{u})}{\|\nabla \bm{u}\|_{0}}\geq\beta_1\|q\|_{0}.
\end{equation*}
The conformation stress space $\bm{V}_{h}$ and the velocity space $X_{h}$ should also satisfy the discrete inf-sup inequality~\cite{venkatesan2017three}: for each $\bm{v}\in X_h$, there exists a positive constant $\beta_2>0$ such that
\begin{equation*}
\sup\limits_{\bm{\psi}\in \bm{V}_{h},\psi_{ij}\neq0}\frac{(\bm{\psi},\mathbb{D}\mathbf{u})}{\|\bm{\psi}\|_{\bm{V}_{h}}}\geq\beta_2\|\nabla \bm{u}\|_{0}.
\end{equation*}
The fully discrete scheme of the \eqref{equivalent-non-model-sqrt} is to find $(\eta_{ih}^{n+1},\bar{V}_{h}^{n+1},V_{h}^{n+1},\mathbf{u}_{h}^{n+1},p_{h}^{n+1},\bm{\psi}_{h}^{n+1})\in(Q_h,S_h,S_h,X_h,M_h,\bm{V}_h)$, such that $\forall(s_{ih}^{n+1},\varphi_{h}^{n+1},\mathbf{v}_{h}^{n+1},q_{h}^{n+1},\bm{\phi}_{h}^{n+1})\in(Q_h,S_h,X_h,M_h,\bm{V}_h)$,
\begin{subequations}\label{FEM-model}
\begin{align}
&\left(\frac{ \eta_{ih}^{n+1} - \eta_{ih}^{n}  }{\Delta t},s_{ih}\right) - (\mathbf{u}_h^{n}\eta_{ih}^{n+1}, \nabla s_{ih}) - \frac{1}{Pe}(\nabla \eta_{ih}^{n} \cdot\nabla \eta_{ih}^{n+1}, s_{ih})  \nonumber\\
&- \frac{1}{Pe}(z_i\nabla\eta_{ih}^{n+1}\cdot\nabla V_h^{n}, s_{ih}) + \frac{1}{Pe}(\nabla\eta_{ih}^{n+1}, \nabla s_{ih}) = - \frac{1}{Pe} (z_i\nabla V_h^{n}, \nabla s_{ih}),
\label{FEM-log-concentration} \\
&\bar{c}_{ih}^{n+1}=\exp(\eta_{ih}^{n+1}),\label{FEM-concentration1}\\
&c_{ih}^{n+1}=\frac{\int_{K_h} c_{ih}^{n} d\mathbf{x}}{\int_{K_h}  \bar{c}_{ih}^{n+1} d\mathbf{x}}\bar{c}_{ih}^{n+1},\label{FEM-concentration2}\\
&(\lambda\nabla \bar{V}_{h}^{n+1}, \nabla\varphi_h)= \sum\limits_{i}(z_ic_{ih}^{n+1}, \varphi_h),\label{FEM-Possion}\\
&\int_{K_h}\left(\frac{\bm{\psi}_h^{n+1} - \bm{\psi}_h^{n}\circ \mathbf{X}(t_n)}{\Delta t}\right):\bm{\phi}_hd\mathbf{x}
- \int_{K_h}(\mathbf{\Omega}_h^{n}\bm{\psi}^{n+1} - \bm{\psi}_h^{n+1}\mathbf{\Omega}_h^{n}):\bm{\phi}_h d\mathbf{x}\\
&= 2\int_{K_h}\mathbf{B}_h^{n}:\bm{\phi}_h d\mathbf{x} +  \frac{1}{Wi} \int_{K_h}(e^{-{\bm{\psi}_h^{n}}}-\mathbf{I}):\bm{\phi}_h d\mathbf{x} - \kappa (\nabla \bm{\psi}_h^{n+1},\nabla\bm{\phi}_h),\label{FEM-Oldroyd-B}\\
& (\frac{\bar{\mathbf{u}}_h^{n+1} - \mathbf{u}_h^{n} }{\Delta t},\mathbf{v}_h) + \xi_h^{n+1}((\mathbf{u}_h^{n}\cdot\nabla)\mathbf{u}_h^{n},\mathbf{v}_h) + \frac{1}{Re} (\nabla{\bar{\mathbf{u}}}_h^{n+1},\nabla\mathbf{v}_h) + (\nabla p_h^{n},\mathbf{v}_h)  \nonumber\\
&= -\xi_h^{n+1} ( e^{{\bm{\psi}_h^{n+1}}},\nabla\mathbf{v}_h) - \xi_h^{n+1} (\sum\limits_{i}z_ic_{ih}^{n+1}\nabla \bar{V}_h^{n+1},\mathbf{v}_h),\label{FEM-NS} \\
&\frac{1}{\Delta t}(\bar{\mathbf{u}}^{n+1},\nabla q_h) = (\nabla (p^{n+1} - p^{n}),\nabla q_h), \label{FEM-p}\\
&\mathbf{u}^{n+1} ={\bar{\mathbf{u}}}_{h}^{n+1} - \Delta t (\nabla p_{h}^{n+1} - \nabla p_{h}^{n}),\label{FEM-incompressible}\\
&\frac{r^{n+1}-r^{n}}{\Delta t} = \frac{1}{2\sqrt{E_{Ph}^{n+1} + B}}\Big( -\xi_h^{n+1} \frac{Co}{Pe}\sum\limits_{i}\int_{K_h} c_{ih}^{n+1}|\nabla (\log c_{ih}^{n+1} + z_i\bar{V}_h^{n+1})|^2 d\mathbf{x} \nonumber\\
&+ \int_{K_h} \sum\limits_{i}Coz_i c_{ih}^{n+1}{\bar{\mathbf{u}}}_h^{n+1} \cdot \nabla \bar{V}_h^{n+1} d\mathbf{x}  -\frac{M}{2Wi}\xi^{n+1}  \int_{K_h}\tr(e^{{\bm{\psi}}_h^{n+1}} + e^{-{\bm{\psi}}_h^{n+1}}- 2\mathbf{I})d\mathbf{x} \nonumber\\
&+\int_{K_h}(\mathbf{u}_h^{n}\cdot\nabla)\mathbf{u}_h^{n}\cdot {\bar{\mathbf{u}}}_h^{n+1} d\mathbf{x} +  \int_{K_h}\nabla \bar{\mathbf{u}}_h^{n+1}:e^{{\bm{\psi}}_h^{n+1}} d\mathbf{x}\nonumber\\
&- \frac{\kappa M}{2}\xi^{n+1}\int_{K_h} \nabla \bm{\psi}_h^{n+1}:\nabla e^{\bm{\psi}_h^{n+1}} d\mathbf{x}  \Big),\label{FEM-r}\\
&\xi_h^{n+1} = \frac{r^{n+1}}{\sqrt{E^{n+1}_{Ph} + B}},\label{FEM-xi}\\
&V_h^{n+1} = \frac{r^{n+1}}{\sqrt{E_{Ph}^{n+1} + B}}\bar{V}_h^{n+1}.\label{FEM-V}
\end{align}
\end{subequations}
where
\begin{align*}
&\bm{\psi}_h^{n}\circ \mathbf{X}(t_n) = \bm{\psi}_h^{n}(\mathbf{X}(t_n),t_n), \\
&E_{Ph}^{n+1}=E_P(\bar{V}_h^{n+1},c_{ih}^{n+1},\bm{\psi}_h^{n+1}).
\end{align*}

\begin{theorem}
The full discrete scheme \eqref{FEM-model} satisfies the discrete energy dissipation law as follows:
\begin{align}\label{FEM-discrete-energy-law}
\frac{E_h^{n+1} - E_h^{n}}{\Delta t} &\leq - \frac{1}{Re} \|\nabla{\bar{\mathbf{u}}}_h^{n+1}\|^2 - \left|\frac{r^{n+1}}{\sqrt{E_{Ph}^{n+1} + B}}\right|^2\frac{Co}{Pe}\sum\limits_{i}\int_{K_h}  c_{ih}^{n+1}|\nabla ((\log c_{ih}^{n+1} + z_i\bar{V}_h^{n+1}))|^2 d\mathbf{x} \nonumber\\
&-\frac{ M}{2Wi} \left|\frac{ r^{n+1}}{\sqrt{E_{Ph}^{n+1} + B}}\right|^2\int_{K_h}\tr(e^{-{\bm{\psi}_{h}^{n}}}+ e^{{\bm{\psi}_{h}^{n}}}-2\mathbf{I}) d\mathbf{x}\\
&- \frac{\kappa M}{2}\left|\frac{ r^{n+1}}{\sqrt{E_{Ph}^{n+1} + B}}\right|^2\int_{K_h} \nabla \bm{\psi}_h^{n+1}:\nabla e^{\bm{\psi}_h^{n+1}} d\mathbf{x},
\end{align}
where
\begin{align}\label{FEM-discrete-energy}
E_h^{n+1} = \frac{1}{2}\|\mathbf{u}_h^{n+1}\|^2  + |r^{n+1}|^2 + \frac{(\Delta t)^2}{2}\|\nabla p_{h}^{n+1}\|^2.
\end{align}
\end{theorem}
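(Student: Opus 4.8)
I would reproduce, at the fully discrete level, the mechanism behind the proof of Theorem~2. The crucial observation is that the discrete energy $E_h^{n+1}$ in \eqref{FEM-discrete-energy} contains \emph{only} the kinetic term, the square $|r^{n+1}|^2$ of the nonlocal variable, and the pressure term: the remaining free energy $E_P$ (electric, entropic and conformation contributions) has already been absorbed into $r$ through the construction \eqref{r-equation}. Hence it suffices to test the momentum equation \eqref{FEM-NS} against the intermediate velocity $\bar{\mathbf u}_h^{n+1}$, to multiply the discrete auxiliary equation \eqref{FEM-r} by $2r^{n+1}$, to add the two resulting relations, and finally to eliminate the pressure using the projection steps \eqref{FEM-p}--\eqref{FEM-incompressible}. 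The concentration, Poisson and log-conformation subproblems \eqref{FEM-log-concentration}--\eqref{FEM-Oldroyd-B} need not be tested here; they enter only through the quantities $c_{ih}^{n+1}$, $\bar V_h^{n+1}$, $E_{Ph}^{n+1}$ and $\xi_h^{n+1}$ (the last being well defined since the constant $B$ guarantees $E_{Ph}^{n+1}+B>0$).

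First I would take $\mathbf v_h=\bar{\mathbf u}_h^{n+1}$ in \eqref{FEM-NS}; with the algebraic identity $2(a-b,a)=\|a\|^2-\|b\|^2+\|a-b\|^2$ applied to the discrete time derivative, this yields a relation for $\tfrac1{2\Delta t}\bigl(\|\bar{\mathbf u}_h^{n+1}\|^2-\|\mathbf u_h^{n}\|^2+\|\bar{\mathbf u}_h^{n+1}-\mathbf u_h^{n}\|^2\bigr)$ whose right-hand side carries $-\tfrac1{Re}\|\nabla\bar{\mathbf u}_h^{n+1}\|^2$, the pressure term $-(\nabla p_h^{n},\bar{\mathbf u}_h^{n+1})$, the convective term $-\xi_h^{n+1}((\mathbf u_h^{n}\cdot\nabla)\mathbf u_h^{n},\bar{\mathbf u}_h^{n+1})$, and the two coupling terms $-\xi_h^{n+1}(e^{\bm\psi_h^{n+1}},\nabla\bar{\mathbf u}_h^{n+1})$ and $-\xi_h^{n+1}(\sum_i z_ic_{ih}^{n+1}\nabla\bar V_h^{n+1},\bar{\mathbf u}_h^{n+1})$. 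Next I would multiply \eqref{FEM-r} by $2r^{n+1}$, using $2r^{n+1}(r^{n+1}-r^{n})\ge(r^{n+1})^2-(r^{n})^2$ together with $2r^{n+1}/\bigl(2\sqrt{E_{Ph}^{n+1}+B}\bigr)=\xi_h^{n+1}$, to get an upper bound for $\tfrac{(r^{n+1})^2-(r^{n})^2}{\Delta t}$ whose right-hand side contains \emph{exactly} the same convective and coupling integrals with the opposite sign, plus the three genuine dissipation integrals (the $c_{ih}^{n+1}|\nabla(\log c_{ih}^{n+1}+z_i\bar V_h^{n+1})|^2$ term, the $\tr(e^{\bm\psi_h^{n+1}}+e^{-\bm\psi_h^{n+1}}-2\mathbf I)$ term, and the $\nabla\bm\psi_h^{n+1}:\nabla e^{\bm\psi_h^{n+1}}$ term), each picking up one extra factor $\xi_h^{n+1}$. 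Adding the momentum identity to this inequality, the convective integrals cancel identically — this is precisely the role of the ``zero-valued'' term appended to the ODE in \eqref{dr-equation} — the electro-elastic coupling cancels once one uses \eqref{double-contraction} to write $(e^{\bm\psi_h^{n+1}},\nabla\bar{\mathbf u}_h^{n+1})=\int_\Omega\nabla\bar{\mathbf u}_h^{n+1}:e^{\bm\psi_h^{n+1}}\,d\mathbf x$, and the Coulomb coupling cancels likewise; what remains is $\tfrac1{2\Delta t}\bigl(\|\bar{\mathbf u}_h^{n+1}\|^2-\|\mathbf u_h^{n}\|^2+\|\bar{\mathbf u}_h^{n+1}-\mathbf u_h^{n}\|^2\bigr)+\tfrac{(r^{n+1})^2-(r^{n})^2}{\Delta t}$ bounded above by $-\tfrac1{Re}\|\nabla\bar{\mathbf u}_h^{n+1}\|^2-(\nabla p_h^{n},\bar{\mathbf u}_h^{n+1})$ minus the three dissipation integrals, now each carrying $|\xi_h^{n+1}|^2=|r^{n+1}/\sqrt{E_{Ph}^{n+1}+B}|^2$.

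It then remains to trade $\bar{\mathbf u}_h^{n+1}$ and $p_h^{n}$ for $\mathbf u_h^{n+1}$ and $p_h^{n+1}$. Taking $q_h=p_h^{n}$ in \eqref{FEM-p} gives $(\bar{\mathbf u}_h^{n+1},\nabla p_h^{n})=\Delta t(\nabla(p_h^{n+1}-p_h^{n}),\nabla p_h^{n})=\tfrac{\Delta t}2\bigl(\|\nabla p_h^{n+1}\|^2-\|\nabla p_h^{n}\|^2-\|\nabla(p_h^{n+1}-p_h^{n})\|^2\bigr)$, while taking $q_h=p_h^{n+1}-p_h^{n}$ in \eqref{FEM-p} together with \eqref{FEM-incompressible} gives $\|\bar{\mathbf u}_h^{n+1}\|^2=\|\mathbf u_h^{n+1}\|^2+(\Delta t)^2\|\nabla(p_h^{n+1}-p_h^{n})\|^2$ (and, as a by-product, $(\mathbf u_h^{n+1},\nabla q_h)=0$ for all $q_h\in M_h$). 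Substituting these two identities, the $\|\nabla(p_h^{n+1}-p_h^{n})\|^2$ contributions cancel exactly, the pressure terms collapse to $\tfrac{(\Delta t)^2}2\bigl(\|\nabla p_h^{n+1}\|^2-\|\nabla p_h^{n}\|^2\bigr)$, and after discarding the nonnegative $\tfrac1{2\Delta t}\|\bar{\mathbf u}_h^{n+1}-\mathbf u_h^{n}\|^2$ on the left one recovers precisely \eqref{FEM-discrete-energy-law} with $E_h^{n+1}$ as in \eqref{FEM-discrete-energy}. (If one further wants $E_h^{n+1}\le E_h^{n}$, it remains only to invoke Lemma~1 and the positive definiteness of $e^{\bm\psi_h^{n+1}}$ to see that the three surviving integrals on the right are nonnegative.)

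The point worth stressing is that, as far as the nonlinear and coupled terms are concerned, the scheme is energy stable \emph{by construction}: the weights $\xi_h^{n+1}$ in \eqref{FEM-NS} and \eqref{FEM-r}, the explicit treatment of the inertial term with that same weight supplemented by the extra $(\mathbf u_h^{n}\cdot\nabla)\mathbf u_h^{n}\cdot\bar{\mathbf u}_h^{n+1}$ integral in \eqref{FEM-r}, and — crucially — the choice of testing \eqref{FEM-NS} with the intermediate velocity $\bar{\mathbf u}_h^{n+1}$ rather than with the projected velocity $\mathbf u_h^{n+1}$, are exactly what makes the three coupling integrals cancel in pairs. Consequently the only genuinely delicate bookkeeping is the pressure-correction algebra of the last step, where one must check that every $\|\nabla(p_h^{n+1}-p_h^{n})\|^2$ term cancels and that no remainder of indefinite sign survives; that is the part I would write out most carefully.
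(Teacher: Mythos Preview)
Your proposal is correct and follows essentially the same approach as the paper: test \eqref{FEM-NS} with $\bar{\mathbf u}_h^{n+1}$, multiply \eqref{FEM-r} by $2r^{n+1}$, add so that the convective, Coulomb and stress couplings cancel in pairs, and then use the projection identities to convert $\|\bar{\mathbf u}_h^{n+1}\|^2$ and $(\nabla p_h^{n},\bar{\mathbf u}_h^{n+1})$ into the telescoping combination $\|\mathbf u_h^{n+1}\|^2+(\Delta t)^2\|\nabla p_h^{n+1}\|^2$. The only cosmetic difference is in the pressure bookkeeping: the paper squares the single identity $\mathbf u_h^{n+1}+\Delta t\nabla p_h^{n+1}=\bar{\mathbf u}_h^{n+1}+\Delta t\nabla p_h^{n}$ and uses $(\mathbf u_h^{n+1},\nabla q_h)=0$ to obtain \eqref{FEM-p-1} in one stroke, whereas you split this into two test-function choices $q_h=p_h^{n}$ and $q_h=p_h^{n+1}-p_h^{n}$; the algebra is equivalent and the $\|\nabla(p_h^{n+1}-p_h^{n})\|^2$ cancellation you highlight is exactly what makes the two routes agree.
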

\begin{proof}
From \eqref{FEM-p} and \eqref{FEM-incompressible}, we have
\begin{align*}
(\mathbf{u}_h^{n+1},\nabla q_h)=0.
\end{align*}
Then we can obtain an equivalent form of \eqref{FEM-incompressible} given by
\begin{align*}
\mathbf{u}_h^{n+1} + \Delta t\nabla p_h^{n+1} =  \bar{\mathbf{u}}_h^{n+1} + \Delta t\nabla p_h^{n}.
\end{align*}
We then derive from the above two equations that
\begin{align}\label{FEM-p-1}
\|\mathbf{u}_h^{n+1}\|^2 + (\Delta t)^2\|\nabla p_h^{n+1}\|^2 =  \|\bar{\mathbf{u}}_h^{n+1}\|^2 + (\Delta t)^2\|\nabla p_h^{n}\|^2 + 2\Delta t(\nabla p_h^{n}, \bar{\mathbf{u}}_h^{n+1}).
\end{align}
Taking the $L^2$ inner product of \eqref{FEM-NS} with $\Delta t{\bar{\mathbf{u}}}_h^{n+1}$, we obtain
\begin{align}\label{FEM-NS-1}
&\frac{1}{2}\left(\|\bar{\mathbf{u}}_h^{n+1}\|^2 -\|\mathbf{u}_h^{n}\|^2 + \|\bar{\mathbf{u}}_h^{n+1}-\mathbf{u}_h^{n}\|^2\right)   = -\frac{\Delta t}{Re}\|\nabla\bar{\mathbf{u}}_h^{n+1}\|^2 \nonumber\\
&-  \frac{\Delta tr^{n+1}}{\sqrt{E_{Ph}^{n+1} + B}}\int_{K_h}\nabla \bar{\mathbf{u}}_h^{n+1}:e^{{\bm{\psi}_h^{n+1}}} d\mathbf{x}- \frac{\Delta tr^{n+1}}{\sqrt{E_{Ph}^{n+1} + B}}\sum\limits_{i}\left(Coz_ic_{ih}^{n+1}\nabla \bar{V}_h^{n+1},{\bar{\mathbf{u}}}_h^{n+1}\right) \nonumber\\
&- \frac{\Delta tr^{n+1}}{\sqrt{E_{Ph}^{n+1} + B}}\left((\mathbf{u}_h^{n}\cdot\nabla)\mathbf{u}_h^{n},{\bar{\mathbf{u}}}_h^{n+1}\right) - \Delta t(\nabla p_h^{n}, \bar{\mathbf{u}}_h^{n+1}),
\end{align}
where we use the following identity
\begin{align}\label{identity}
2(a-b)a=|a|^2-|b|^2+|a-b|^2.
\end{align}
By multiplying \eqref{FEM-r} with $2\Delta t r^{n+1}$ and using \eqref{identity}, we obtain
\begin{align}\label{FEM-r-1}
&|r^{n+1}|^2-|r^{n}|^2 + |r^{n+1}-r^{n}|^2 = \frac{\Delta t r^{n+1}}{\sqrt{E_{Ph}^{n+1} + B}}\Big(\frac{-r^{n+1}Co}{Pe\sqrt{E_{P}^{n+1} + B}}\sum\limits_{i}\int_{K_h} c_{ih}^{n+1}|\nabla (\log c_{ih}^{n+1} + z_i\bar{V}_h^{n+1})|^2 d\mathbf{x} \nonumber\\
&+ \int_{K_h} \sum\limits_{i}Coz_i c_{ih}^{n+1}{\bar{\mathbf{u}}}_h^{n+1} \cdot \nabla \bar{V}_h^{n+1} d\mathbf{x} + \int_{K_h}(\mathbf{u}_h^{n}\cdot\nabla)\mathbf{u}_h^{n}\cdot {\bar{\mathbf{u}}}_h^{n+1} d\mathbf{x}\nonumber\\
&-\frac{M}{2Wi}\frac{r^{n+1}}{\sqrt{E_{Ph}^{n+1} + B}} \int_{K_h}\tr(e^{{\bm{\psi}}_h^{n+1}} + e^{-{\bm{\psi}}_h^{n+1}}- 2\mathbf{I})d\mathbf{x} +  \int_{K_h}\nabla \bar{\mathbf{u}}_h^{n+1}:e^{{\bm{\psi}}_h^{n+1}} d\mathbf{x} \\
& - \frac{\kappa M}{2}\frac{ r^{n+1}}{\sqrt{E_{Ph}^{n+1} + B}}\int_{K_h} \nabla \bm{\psi}_h^{n+1}:\nabla e^{\bm{\psi}_h^{n+1}} d\mathbf{x} \Big).
\end{align}
Combining \eqref{FEM-r-1} with \eqref{FEM-p-1} and \eqref{FEM-NS-1}, we can get \eqref{FEM-discrete-energy-law}.
\end{proof}
\begin{remark}
In computation, the terms related to the exponential function in scheme are projected into the finite element space. And in this space, by using a randomly generated function, the value $\int_{K_h} \nabla \bm{\psi}_h^{n+1}:\nabla e^{\bm{\psi}_h^{n+1}} d\mathbf{x}$ is verified to be greater than 0. 
\end{remark}
It seems that the developed scheme \eqref{FEM-model} is not a fully decoupled scheme. The direct iterative method to solve it can be time-consuming. Therefore, we develop the following process to eliminate all nonlocal terms.
The implementation is presented as follows:\\
Step 1. Find $\eta_{ih}^{n+1}\in Q_h$ for $\forall s_{ih}\in Q_h$ such that
\begin{align}\label{decoupled-numerical-logconcentration}
&\left(\frac{ \eta_{ih}^{n+1} - \eta_{ih}^{n}  }{\Delta t},s_{ih}\right) - (\mathbf{u}_h^{n}\eta_{ih}^{n+1}, \nabla s_{ih}) - \frac{1}{Pe}(\nabla \eta_{ih}^{n} \cdot\nabla \eta_{ih}^{n+1}, s_{ih})  \nonumber\\
&- \frac{1}{Pe}(z_i\nabla\eta_{ih}^{n+1}\cdot\nabla V_h^{n}, s_{ih}) + \frac{1}{Pe}(\nabla\eta_{ih}^{n+1}, \nabla s_{ih}) = - \frac{1}{Pe} (z_i\nabla V_h^{n}, \nabla s_{ih}).
\end{align}
Then $c_{ih}^{n+1}$ can be computed by
\begin{align*}
&\bar{c}_{ih}^{n+1}=\exp(\eta_{ih}^{n+1}),\\
&c_{ih}^{n+1}=\frac{\int_{K_h} c_{ih}^{n} d\mathbf{x}}{\int_{K_h}  \bar{c}_{ih}^{n+1} d\mathbf{x}}\bar{c}_{ih}^{n+1}.
\end{align*}
Step 2. Find $\bar{V}_{h}^{n+1}\in Q_h$ for $\forall \varphi_h\in Q_h$ such that
\begin{align}
(\lambda\nabla \bar{V}_{h}^{n+1}, \nabla\varphi_h)= \sum\limits_{i}(z_ic_{ih}^{n+1}, \varphi_h),\label{decoupled-numerical-Possion}
\end{align}
Step 3. Find $\bm{\psi}_{h}^{n+1}\in \bm{V}_h$ for $\forall \bm{\phi}_h\in \bm{V}_h$ such that
\begin{align*}
&\int_{\Omega}\left(\frac{\bm{\psi}_h^{n+1} - \bm{\psi}_h^{n}\circ \mathbf{X}(t_n)}{\Delta t}\right):\bm{\phi}_hd\mathbf{x}
- \int_{\Omega}(\mathbf{\Omega}_h^{n}\bm{\psi}^{n+1} - \bm{\psi}_h^{n+1}\mathbf{\Omega}_h^{n}):\bm{\phi}_h d\mathbf{x}\\
&= 2\int_{\Omega}\mathbf{B}_h^{n}:\bm{\phi}_h d\mathbf{x} +  \frac{1}{Wi} \int_{\Omega}(e^{-{\bm{\psi}_h^{n}}}-\mathbf{I}):\bm{\phi}_h d\mathbf{x}- \kappa (\nabla \bm{\psi}_h^{n+1},\nabla\bm{\phi}_h).
\end{align*}
Step 4. For the velocity field $\bar{\mathbf{u}}_h^{n+1}$, we use the nonlocal variable $\xi^{n+1}$ to split it into a linear combination
\begin{align}\label{splitubar}
&\bar{\mathbf{u}}_h^{n+1} = \bar{\mathbf{u}}^{n+1}_{1h} + \xi^{n+1}\bar{\mathbf{u}}^{n+1}_{2h},
\end{align}
By replacing these variables $\bar{\mathbf{u}}^{n+1}$ in the scheme \eqref{FEM-NS}, and
then splitting the obtained equations according to $\xi^{n+1}$, we can arrive at two subequations:
find ${\mathbf{u}}_{1h}^{n+1},{\mathbf{u}}_{2h}^{n+1}\in X_h$ for $\forall \mathbf{v}_h\in X_h$ such that
\begin{align}
& (\frac{{\bar{\mathbf{u}}}_{1h}^{n+1} -\mathbf{u}_{h}^{n} }{\Delta t},\mathbf{v} ) + \frac{1}{Re}(\nabla\bar{\mathbf{u}}_{1h}^{n+1},\nabla\mathbf{v})  = (\nabla\cdot\mathbf{v},  p_{h}^{n}),\label{decoupled-numerical-NS1}\\
& (\frac{{\bar{\mathbf{u}}}_{2h}^{n+1}}{\Delta t},\mathbf{v} ) + \frac{1}{Re}(\nabla\bar{\mathbf{u}}_{2h}^{n+1},\nabla\mathbf{v}) = -(e^{{\bm{\psi}_h^{n+1}}},\nabla \mathbf{v})\nonumber\\
&- ((\mathbf{u}_{h}^{n}\cdot\nabla)\mathbf{u}_{h}^{n},\mathbf{v} )- (\sum\limits_{i}Coz_ic_{ih}^{n+1}\nabla \bar{V}_{h}^{n+1},\mathbf{v} ).\label{decoupled-numerical-NS2}
\end{align}
Step 5. We compute the auxiliary variable $r^{n+1}$ by
\begin{align}\label{solve-r}
\xi_h^{n+1} = \frac{ r^{n}  + \Delta t\zeta_{1h}}{\sqrt{E_{Ph}^{n+1}+ B} + \Delta t \zeta_{2h}},
\end{align}
where
\begin{align*}
\zeta_{1h} &= \frac{1}{2\sqrt{E_{Ph}^{n+1} + B}}\Big(\int_{K_h} \sum\limits_{i}Coz_i c_{ih}^{n+1}{\bar{\mathbf{u}}}_{1h}^{n+1} \cdot \nabla \bar{V}_{h}^{n+1} d\mathbf{x} \\
&\quad + \int_{K_h}(\mathbf{u}_{h}^{n}\cdot\nabla)\mathbf{u}_{h}^{n}\cdot {\bar{\mathbf{u}}}_{1h}^{n+1} d\mathbf{x} + \int_{K_h}\nabla \bar{\mathbf{u}}_{1h}^{n+1}:e^{{\bm{\psi}}_h^{n+1}} d\mathbf{x} \Big),\\
\zeta_{2h}& = \frac{1}{2\sqrt{E_{Ph}^{n+1} + B}}\Big(-\int_{K_h} \sum\limits_{i}Coz_i c_{ih}^{n+1}{\bar{\mathbf{u}}}_{2h}^{n+1} \cdot \nabla \bar{V}_{h}^{n+1} d\mathbf{x} - \int_{K_h}(\mathbf{u}_{h}^{n}\cdot\nabla)\mathbf{u}_{h}^{n}\cdot {\bar{\mathbf{u}}}_{2h}^{n+1} d\mathbf{x} \\
&\quad + \frac{Co}{Pe}\sum\limits_{i}\int_{K_h} c_{ih}^{n+1}|\nabla (\log c_{ih}^{n+1} + z_i\bar{V}_h^{n+1})|^2 d\mathbf{x} + \frac{M}{2Wi} \int_{K_h}\tr(e^{{\bm{\psi}}_{h}^{n+1}} + e^{-{\bm{\psi}}_{h}^{n+1}}- 2\mathbf{I})d\mathbf{x}\\
& \quad - \int_{K_h}\nabla\bar{ \mathbf{u}}_{2h}^{n+1}:e^{{\bm{\psi}}_{h}^{n+1}} d\mathbf{x}+ \frac{\kappa M}{2}\int_{K_h} \nabla \bm{\psi}_h^{n+1}:\nabla e^{\bm{\psi}_h^{n+1}} d\mathbf{x} \Big).
\end{align*}
We verify that \eqref{solve-r} is solvable by showing $\sqrt{E_{Ph}^{n+1}+ B} + \Delta t \zeta_2\neq 0$. By taking the $L^2$ inner product of \eqref{decoupled-numerical-NS2} with $\bar{\mathbf{u}}_{2}^{n+1}$, we have
\begin{align}
&- \int_{K_h}\nabla\bar{ \mathbf{u}}_{2h}^{n+1}:e^{{\bm{\psi}}_h^{n+1}} d\mathbf{x} - \int_{K_h}(\mathbf{u}_h^{n}\cdot\nabla)\mathbf{u}_h^{n}\cdot {\bar{\mathbf{u}}}_{2h}^{n+1} d\mathbf{x}
-\int_{K_h} \sum\limits_{i}Coz_i c_{ih}^{n+1}{\bar{\mathbf{u}}}_{2h}^{n+1} \cdot \nabla \bar{V}_h^{n+1} d\mathbf{x} \\
&= \frac{1}{\Delta t}\|{\bar{\mathbf{u}}}_{2h}^{n+1}\|^2 + \frac{1}{Re}\|\nabla\bar{\mathbf{u}}_{2h}^{n+1}\|^2\geq 0.
\end{align}
From \eqref{lemma1.2}, we can derive $\int_{K_h}\tr(e^{{\bm{\psi}}^{n+1}} + e^{-{\bm{\psi}}^{n+1}}- 2\mathbf{I})d\mathbf{x} \geq 0$ by choosing $\bm{\sigma}=e^{{\bm{\psi}}^{n+1}}$. By the positivity of concentration $c_i>0$, we have $\sum\limits_{i}\int_{K_h} c_{i}^{n+1}|\nabla (\log c_{ih}^{n+1} + z_i\bar{V}_h^{n+1})|^2 d\mathbf{x}\geq0$. Therefore,
\begin{align*}
\zeta_2 &= \frac{1}{2\sqrt{E_{P}^{n+1} + B}}\Big(\frac{Co}{Pe}\sum\limits_{i}\int_{K_h} c_{i}^{n+1}|\nabla g_{i}^{n+1}|^2 d\mathbf{x} + \frac{M}{2Wi} \int_{K_h}\tr(e^{{\bm{\psi}}^{n+1}} + e^{-{\bm{\psi}}^{n+1}}- 2\mathbf{I})d\mathbf{x}\\
&+ \frac{\kappa M}{2}\int_{K_h} \nabla \bm{\psi}^{n+1}:\nabla e^{\bm{\psi}^{n+1}} d\mathbf{x}\Big)+ \frac{1}{2\sqrt{E_{P}^{n+1} + B}}(\frac{1}{\Delta t}\|{\bar{\mathbf{u}}}_{2}^{n+1}\|^2 + \frac{1}{Re}\|\nabla\bar{\mathbf{u}}_{2}^{n+1}\|^2)\geq 0,
\end{align*}
which implies $\sqrt{E_{P}^{n+1}+ B} + \Delta t \zeta_2\neq 0$. Then we update $r^{n+1}$, $V_{h}^{n+1}$ and ${\bar{\mathbf{u}}}_{h}^{n+1}$ by
\begin{align*}
&r^{n+1} = \xi_h^{n+1}\sqrt{E_{Ph}^{n+1}+ B},\\
&V_{h}^{n+1} = \xi_h^{n+1}\bar{V}_{h}^{n+1},\\
&{\bar{\mathbf{u}}}_{h}^{n+1} = {\bar{\mathbf{u}}}_{1h}^{n+1}+ \xi^{n+1}{\bar{\mathbf{u}}}_{2h}^{n+1}.
\end{align*}
Step 6. Find $p_{h}^{n+1}\in X_h$ for $\forall q_{h}\in X_h$ such that
\begin{align*}
\left(\nabla (p_{h}^{n+1} - p_{h}^{n}),\nabla q_{h}\right)=\frac{1}{\Delta t}({\bar{\mathbf{u}}}_{h}^{n+1},\nabla q_{h}).
\end{align*}
Then we update ${\mathbf{u}}_{h}^{n+1}$ by
\begin{align*}
{\mathbf{u}}_{h}^{n+1} = {\bar{\mathbf{u}}}_{h}^{n+1} - \Delta t (\nabla p_{h}^{n+1} - \nabla p_{h}^{n}).
\end{align*}

\section{Numerical experiments}
In this section, we present several numerical examples to validate the proposed method. We first present the numerical results to examine the accuracy and stability of the scheme. We also investigate mass conservation and energy dissipation. Finally, we study the elastic effect of viscoelastic fluids on the electro-convection phenomena. 
All the numerical results are obtained by FreeFEM++ \cite{MR3043640}.

\subsection{Accuracy test}
The computational domain is assumed to be a rectangular region $\Omega = [0,1]^2$, and the terminal time is set to be $T = 1$. We choose the spatial step sizes $h_x = h_y =\sqrt{2}/120$ so that the spatial error is negligible compared with the temporal error. The parameters are $\lambda=1$ ,$z_p = 1$, $z_n = -1$, $Pe = 2$ , $Re = 1$, $Co = 5$, $M=1$, $\kappa=0.001$. 
The initial conditions are taken to be
\begin{align*}
&c_{p0} = 1.2 + \cos(\pi x)\cos(\pi y), \\
&c_{n0} = 1.2 - \cos(\pi x)\cos(\pi y), \\
&\bm{u}_0 = \bm{0},\quad \bm{\psi}_0 = \bm{I}.
\end{align*}
In our experiments, the reference solutions are replaced by the solution computed at a very fine mesh. Tables \ref{tab:first-L2accuracy-Wi-0.5}-\ref{tab:first-order-L2accuracy-Wi-10} illustrate the temporal convergence results for the first-order scheme for different Weissenberg numbers. We can observe that the expected convergence rates are achieved to confirm the accuracy of the proposed schemes.

\begin{table}[tbhp]
{\small
   \caption{Temporal convergence for the velocity $\bm{u}$, pressure $p$, concentrations $c_1$, $c_2$, electric potential $V$, tensor $\bm{\psi}$ and $r$ by using the $L^2$ norm with $Wi=0.5$. \label{tab:first-L2accuracy-Wi-0.5}}
  \begin{center}
     \begin{tabular}{llllllllll}
    \hline
       $\Delta t$      & $\|e_{\bm{u}}\|_2$   &order   &$\|e_{p}\|_2$    &order   &$\|e_{c_1}\|_2$   &order    &$\|e_{c_2}\|_2$   &order \\
    \hline
    $1/10$  & 0.000652369      &-      &   0.0939398        & -       &  0.000125439     &- &  0.000125413           &-
    \\
    $1/20$  &  0.000253842      &1.36   &  0.0468641      &  1.00   &  3.6066e-05       &1.80  & 3.60687e-05 & 1.80
    \\
    $1/40$  &  5.28484e-05     &2.26   &  0.0135061        &  1.79   &  1.18201e-05     &1.61 &1.18211e-05 & 1.61
    \\
    $1/80$ & 9.67292e-06      &2.45   & 0.00293073         &  2.20   &  3.93914e-06     &1.59 & 3.93918e-06 & 1.59
    \\
     \hline
     $\Delta t$       &$\|e_{V}\|_2$    &order  &$\|e_{\bm{\psi}}\|_2$     &order  &$\|e_{r}\|_2$   &order  \\
    \hline
    $1/10$
    &   1.2947e-05      & -      &     0.0691449     & -       &  0.0742016        &-\\
    $1/20$
    &   3.70181e-06   &  1.81   &    0.03275      & 1.08    & 0.0423519           & 0.81\\
    $1/40$
    &    1.20929e-06   & 1.61    &   0.0137475      & 1.25    & 0.0214446        & 0.98\\
    $1/80$
    &   4.02311e-07    & 1.59   &    0.00524009       & 1.39    &  0.00911348        & 1.23\\
    \hline
     \end{tabular}
  \end{center}
}
\end{table}

\begin{table}[tbhp]
{\small
   \caption{Temporal convergence for the velocity $\bm{u}$, pressure $p$, concentrations $c_1$, $c_2$, electric potential $V$, tensor $\bm{\psi}$ and $r$ by using the $L^2$ norm with $Wi=5$. \label{tab:L2accuracy-Wi-5}}
  \begin{center}
     \begin{tabular}{llllllllll}
    \hline
       $\Delta t$      & $\|e_{\bm{u}}\|_2$   &order   &$\|e_{p}\|_2$    &order   &$\|e_{c_1}\|_2$   &order    &$\|e_{c_2}\|_2$   &order \\
    \hline
    $1/10$  & 0.0001947      &-      & 0.103931      & -      & 0.000125807     &-  &0.000125802  &-
    \\
    $1/20$  & 5.93838e-05    &1.71   & 0.0578444     & 0.85   & 3.61298e-05    &1.80 &3.61264e-05 &1.80
    \\
    $1/40$  & 1.70225e-05    &1.80   & 0.0206123     &  1.49   & 1.18335e-05    &1.61 & 1.18344e-05 &1.61
    \\
    $1/80$ &  3.89156e-06    &2.13   & 0.00540877    &  1.93   & 3.94263e-06    &1.59 &3.94261e-06  &1.59
    \\
     \hline
     $\Delta t$       &$\|e_{V}\|_2$    &order  &$\|e_{\bm{\psi}}\|_2$     &order  &$\|e_{r}\|_2$   &order  \\
    \hline
    $1/10$
    &  1.29429e-05       & -      &   0.0446843       & -       & 0.0618004        &-\\
    $1/20$
    &   3.70085e-06      & 1.81   & 0.0263833           & 0.76    &   0.0364599        & 0.76\\
    $1/40$
    &  1.20905e-06       & 1.61   &  0.00973823         & 1.44    &  0.0187784        & 0.96\\
    $1/80$
    &  4.02246e-07       & 1.59   &  0.00260591          & 1.90    &   0.00805101       & 1.22\\
    \hline
     \end{tabular}
  \end{center}
}
\end{table}

\begin{table}[tbhp]
{\small
   \caption{Temporal convergence for the velocity $\bm{u}$, pressure $p$, concentrations $c_1$, $c_2$, electric potential $V$, tensor $\bm{\psi}$ and $r$ by using the $L^2$ norm with $Wi=10$. \label{tab:first-order-L2accuracy-Wi-10}}
  \begin{center}
     \begin{tabular}{llllllllll}
    \hline
       $\Delta t$      & $\|e_{\bm{u}}\|_2$   &order   &$\|e_{p}\|_2$    &order   &$\|e_{c_1}\|_2$   &order    &$\|e_{c_2}\|_2$   &order \\
    \hline
    $1/10$  & 0.000160325  &-      &    0.103804        & -       &    0.00012581         &-   &0.000125807  &-
    \\
    $1/20$  & 6.43392e-05  &1.32   &  0.0587344      &  0.82   &     3.61305e-05       &1.80 &3.61269e-05  &1.80
    \\
    $1/40$  & 2.45813e-05  &1.39   &   0.0213994      &  1.46   &     1.18338e-05      &1.61  & 1.18344e-05  &1.61
    \\
    $1/80$  & 5.63447e-06  &2.13   &  0.00569618      &  1.91   &    3.94266e-06      &1.59  & 3.94267e-06 &1.59
    \\
     \hline
     $\Delta t$       &$\|e_{V}\|_2$    &order  &$\|e_{\bm{\psi}}\|_2$     &order  &$\|e_{r}\|_2$   &order  \\
    \hline
    $1/10$
    &   1.29426e-05      & -      &   0.044515        & -       &   0.0617406      &-\\
    $1/20$
    &   3.7008e-06      & 1.81   & 0.0267973        & 0.73    &  0.0364467        & 0.76\\
    $1/40$
    &   1.20904e-06      & 1.61   &  0.010093       & 1.41    &  0.0187763       & 0.96\\
    $1/80$
    &   4.02243e-07      & 1.59   &  0.0027305       & 1.89    &  0.00805112       & 1.22\\
    \hline
     \end{tabular}
  \end{center}
}
\end{table}

\subsection{The energy dissipation law}
The parameters used in the calculation are: $T=5$, $z_p = 1$, $z_n = -1$, $\lambda=0.1$ , $M=1$, $Pe = 40$ , $Re = 0.5$, $Co = 1$, $\kappa=0.01$, $h_x = h_y =\sqrt{2}/140$. This example aims to verify the mass conserving and energy dissipation property. 
We test the numerical scheme with the initial conditions:
\begin{align*}
&c_{p0} = 1.1 + \cos(\pi x)\cos(\pi y), \\
&c_{n0} = 1.1 - \cos(\pi x)\cos(\pi y), \\
&\bm{u}_0 = \bm{0},\\
&\bm{\psi}_0 = \left(
                       \begin{array}{cc}
                         2(x^2(1-x)^2+y^2(1-y)^2) + 0.1 & 0 \\
                         0 & x^2(1-x)^2+y^2(1-y)^2 + 0.1 \\
                       \end{array}
                     \right).
\end{align*}
We present the time evolution of the discrete total energy for different time steps when $Wi=1$ in Fig. \ref{EnergyDecay} and different $Wi$ when 
$\Delta t=0.001$ in Fig. \ref{EnergyDecayWi}. Fig. \ref{MassConservation} plots time evolution of the discrete masses $\int_{\Omega} c_i d\mathbf{x}, i = p, n$, which demonstrates the mass conservation property of the numerical scheme. 
\begin{figure}[htb]
\setlength{\abovecaptionskip}{0pt}
\setlength{\belowcaptionskip}{0pt}
\renewcommand*{\figurename}{Fig.}
\centering
\begin{minipage}[t]{0.41\linewidth}
\includegraphics[width=2.15in, height=2.2 in]{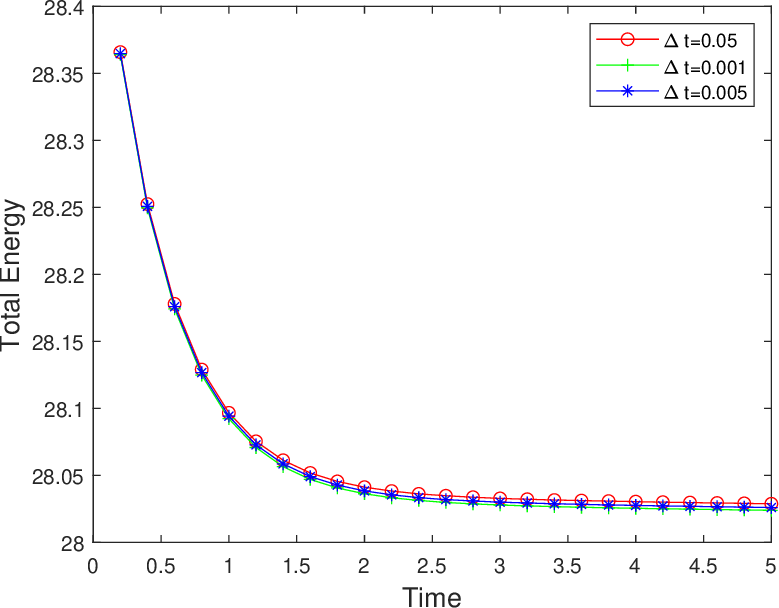}
\end{minipage}%
\caption{ Time evolution of the discrete total energy for different time step.\label{EnergyDecay}}
\end{figure}

\begin{figure}[htb]
\setlength{\abovecaptionskip}{0pt}
\setlength{\belowcaptionskip}{0pt}
\renewcommand*{\figurename}{Fig.}
\centering
\begin{minipage}[t]{0.41\linewidth}
\includegraphics[width=2.15in, height=2.2 in]{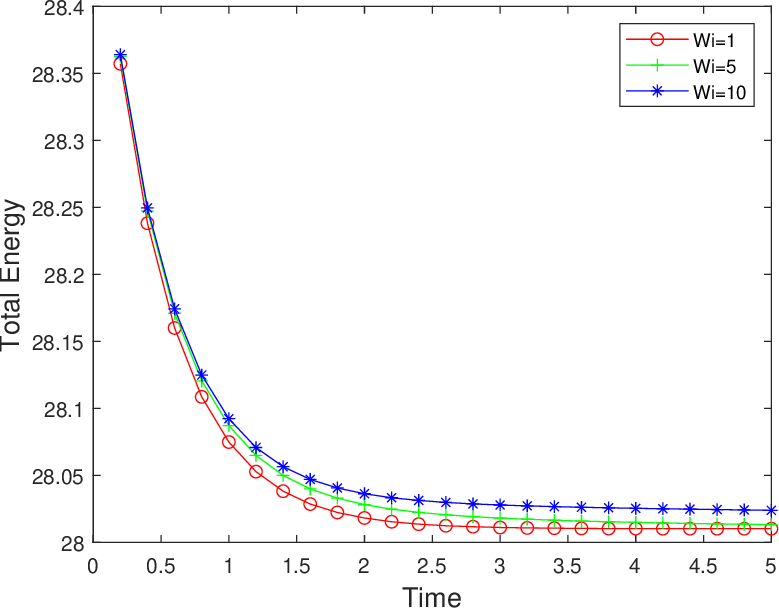}
\end{minipage}%
\caption{ Time evolution of the discrete total energy for  different Wi.\label{EnergyDecayWi}}
\end{figure}

\begin{figure}[htbp]
\setlength{\abovecaptionskip}{1pt}
\renewcommand*{\figurename}{Fig.}
\centering
\begin{minipage}[t]{0.41\linewidth}
\includegraphics[width=2.15in, height=1.8in]{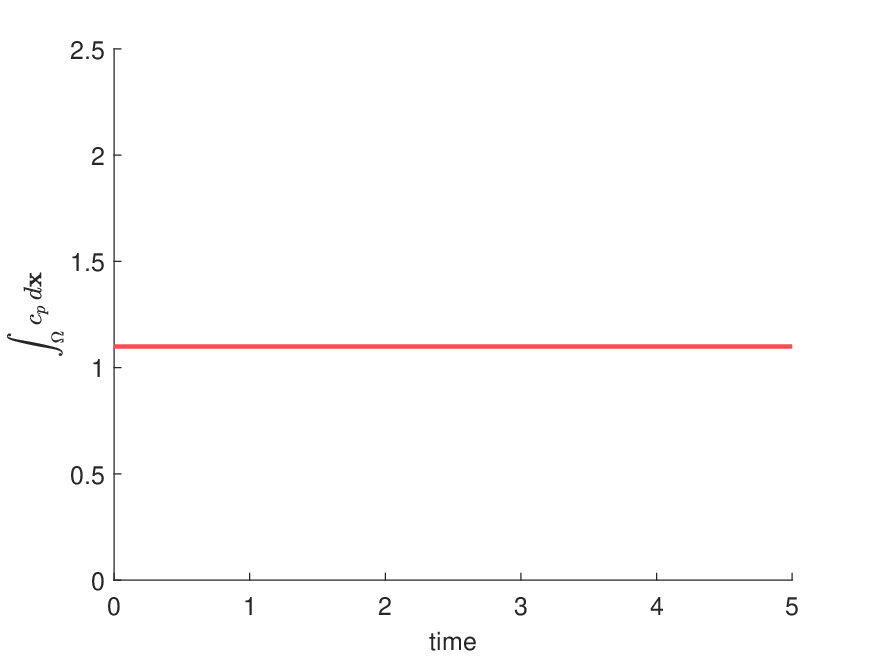}
\end{minipage}
\begin{minipage}[t]{0.41\linewidth}
\includegraphics[width=2.15in, height=1.8in]{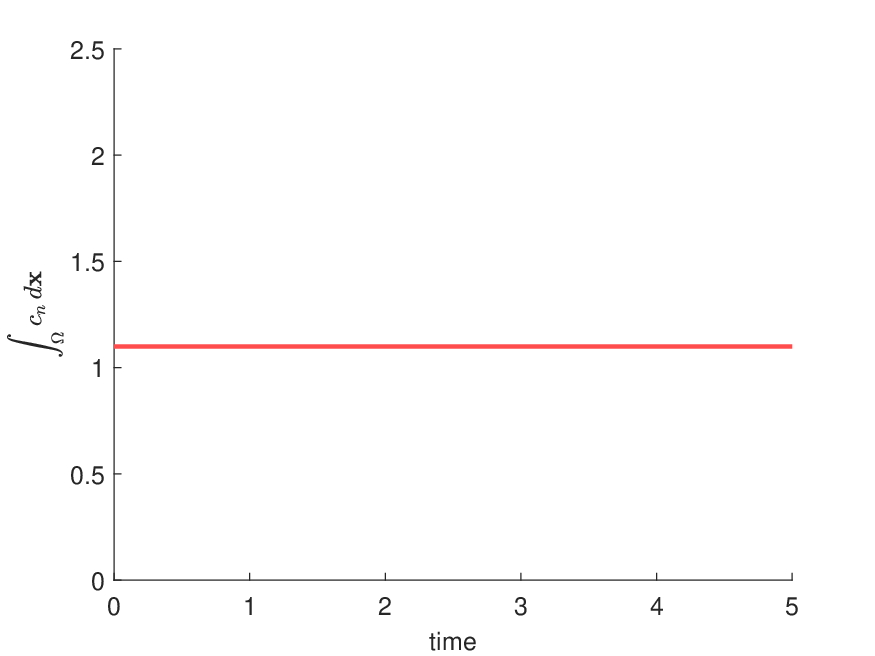}
\end{minipage}
\caption{Time evolution of the discrete mass $\int_{\Omega} c_i d\mathbf{x}, i = p, n$.\label{MassConservation}}
\end{figure}

\subsection{Dynamics with initial discontinuous concentrations}
We investigate the dynamics of the system on the unit square with discontinuous concentrations which represents an interface between the electrolyte and the solid surfaces where electroosmosis (transport of ions from the electrolyte towards the solid surface) occurs. 
The initial conditions are given as follows
\begin{align*}
&c_p=
\begin{cases}
1,(0,1)^2\backslash{(0,0.75)\times(0,1)\cup(0.75,1)\times(0,\frac{11}{20})},\\
0.2,\text{otherwise}
\end{cases}\\
&c_n=
\begin{cases}
1,(0,1)^2\backslash{(0,0.75)\times(0,1)\cup(0.75,1)\times(\frac{9}{20},1)},\\
0.2,\text{otherwise}
\end{cases}\\
& \mathbf{u}_0=\mathbf{0},\quad \bm{\psi}_0 = \bm{I}.
\end{align*}

\begin{figure}[htbp]
\setlength{\abovecaptionskip}{1pt}
\setlength{\abovecaptionskip}{1pt}
\setlength{\belowcaptionskip}{1pt}
\renewcommand*{\figurename}{Fig.}
\centering
\begin{minipage}[t]{0.3\linewidth}
\includegraphics[width=1.80in, height=1.4in]{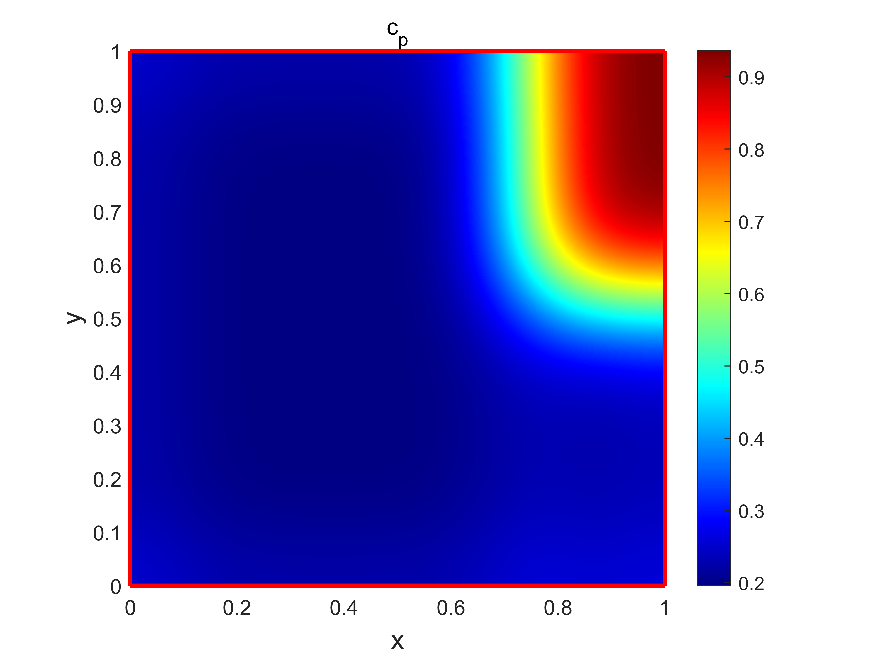}
\end{minipage}
\begin{minipage}[t]{0.3\linewidth}
\includegraphics[width=1.80in, height=1.4in]{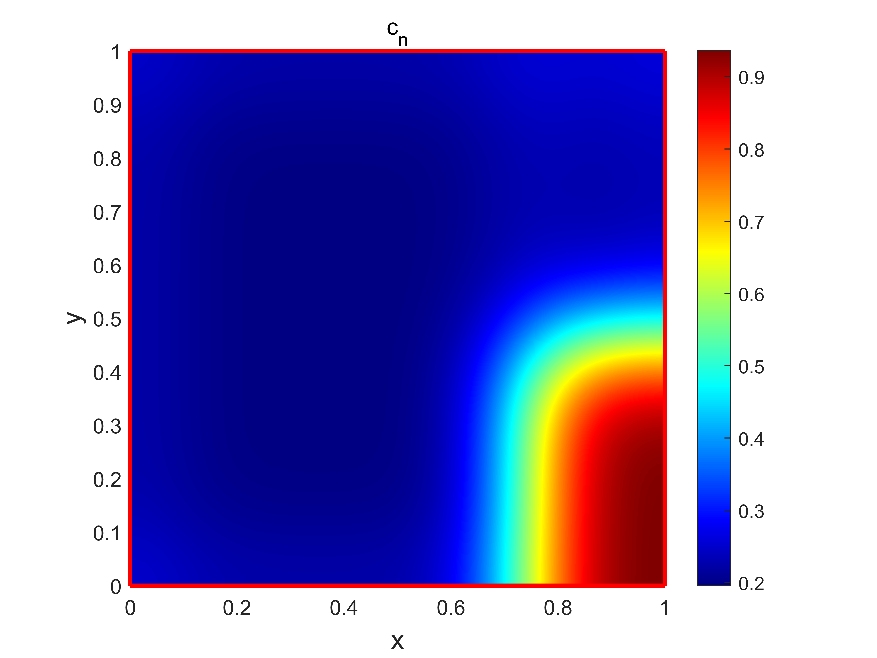}
\end{minipage}
\begin{minipage}[t]{0.3\linewidth}
\includegraphics[width=1.80in, height=1.4in]{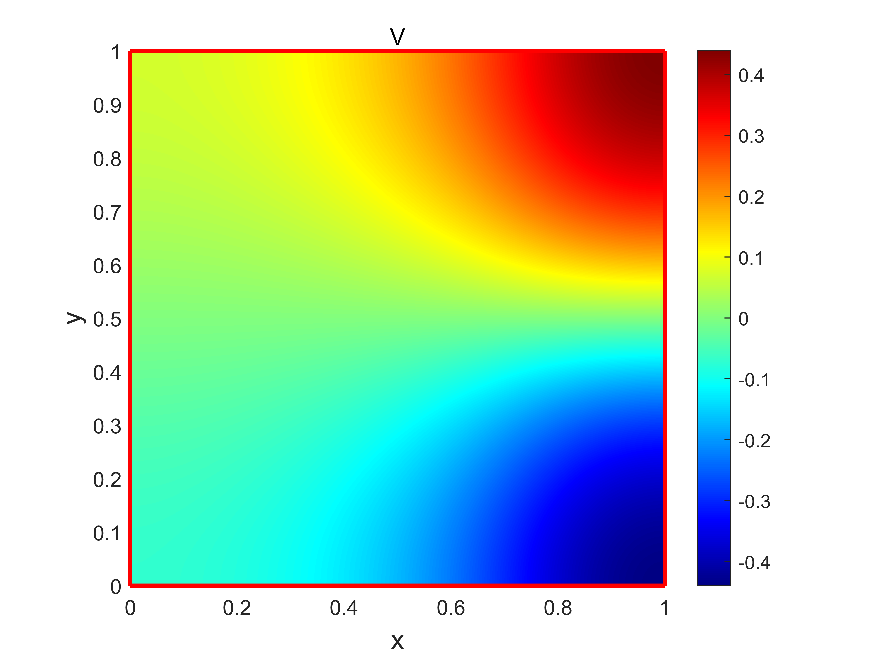}
\end{minipage}
\\
\begin{minipage}[t]{0.3\linewidth}
\includegraphics[width=1.80in, height=1.4in]{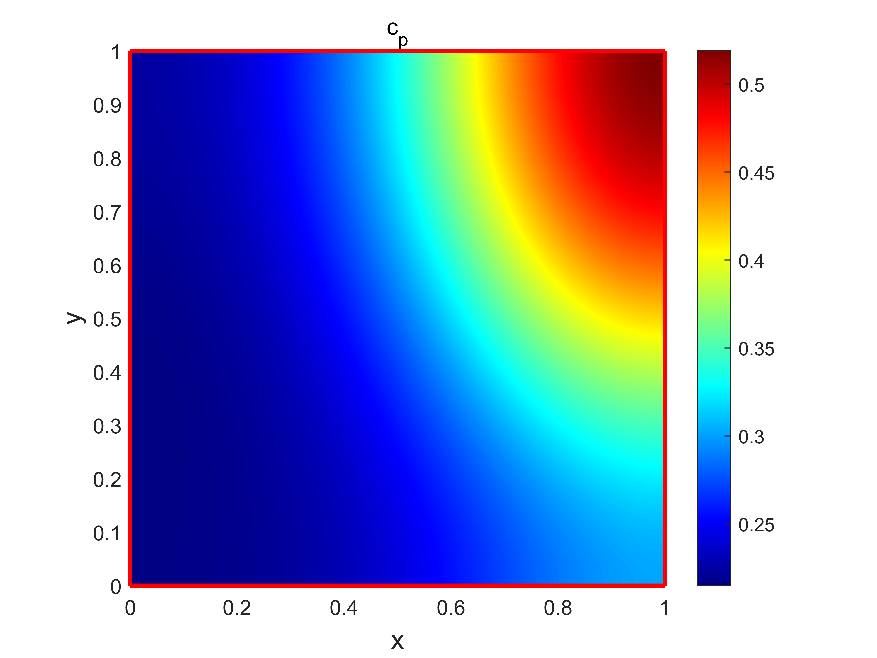}
\end{minipage}
\begin{minipage}[t]{0.3\linewidth}
\includegraphics[width=1.80in, height=1.4in]{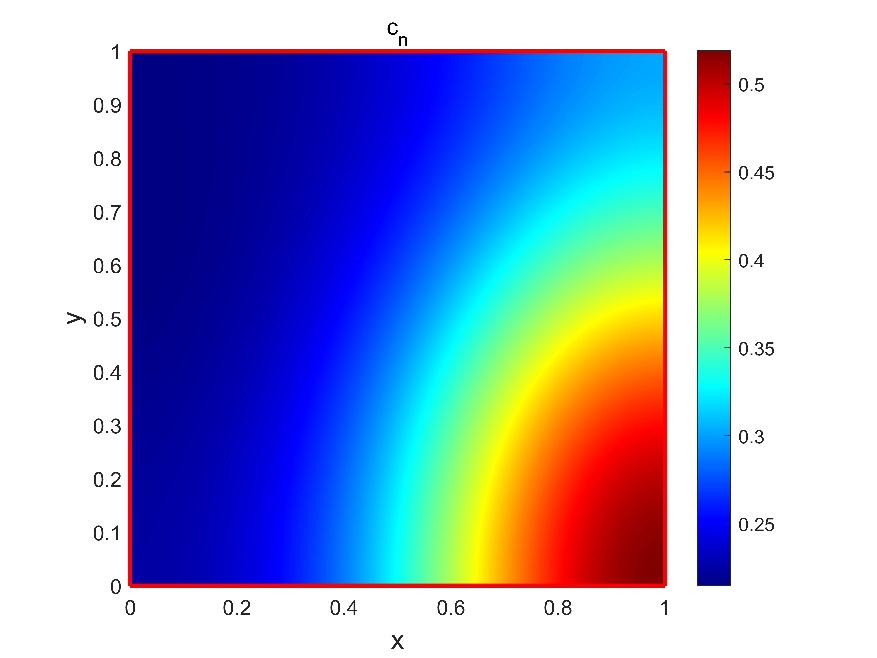}
\end{minipage}
\begin{minipage}[t]{0.3\linewidth}
\includegraphics[width=1.80in, height=1.4in]{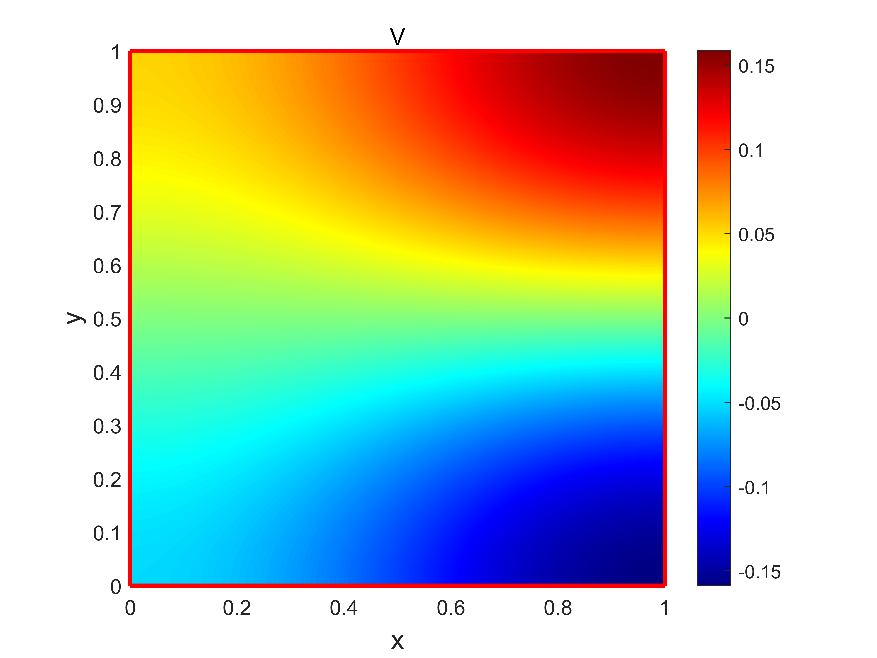}
\end{minipage}
\\
\begin{minipage}[t]{0.3\linewidth}
\includegraphics[width=1.80in, height=1.4in]{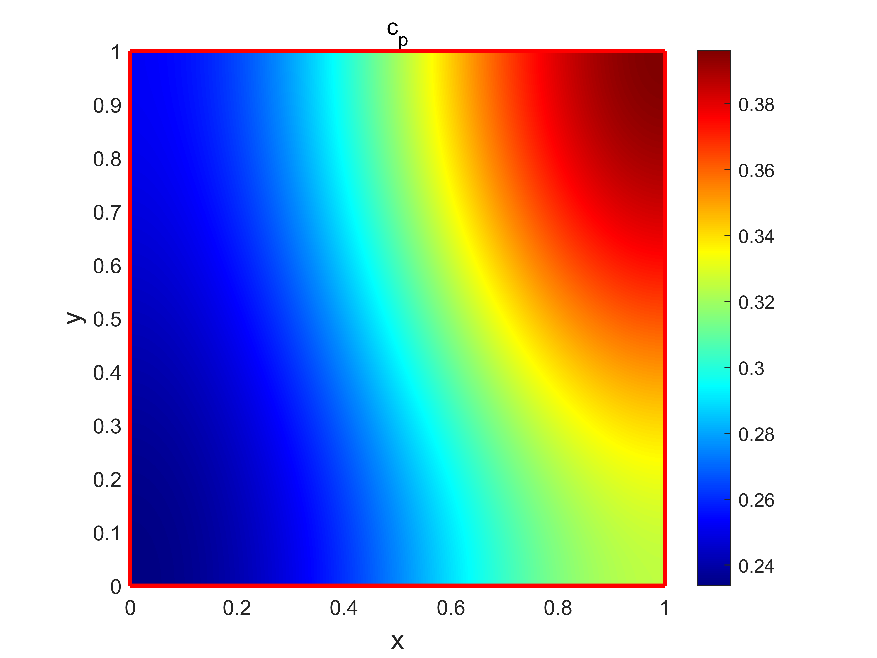}
\end{minipage}
\begin{minipage}[t]{0.3\linewidth}
\includegraphics[width=1.80in, height=1.4in]{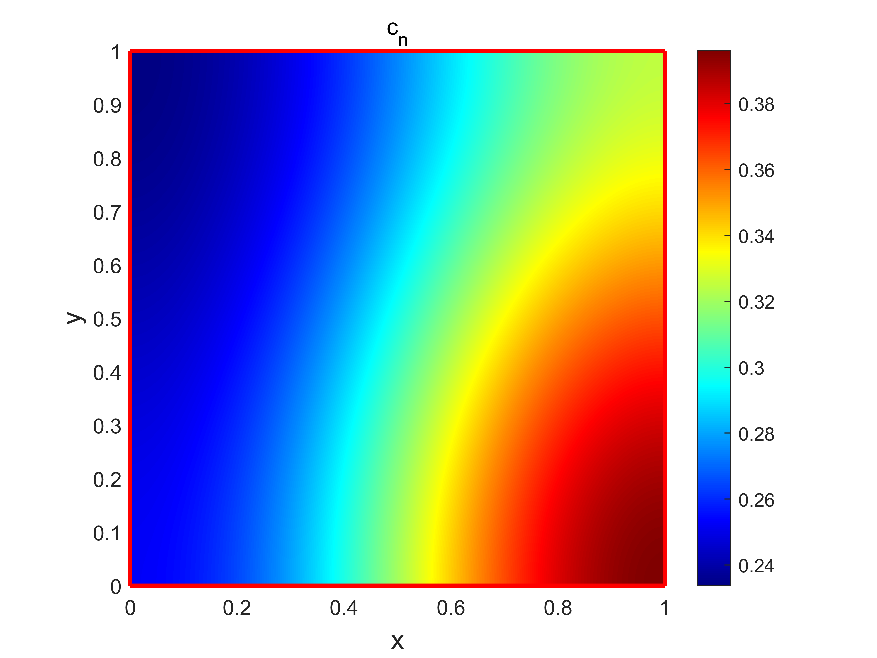}
\end{minipage}
\begin{minipage}[t]{0.3\linewidth}
\includegraphics[width=1.80in, height=1.4in]{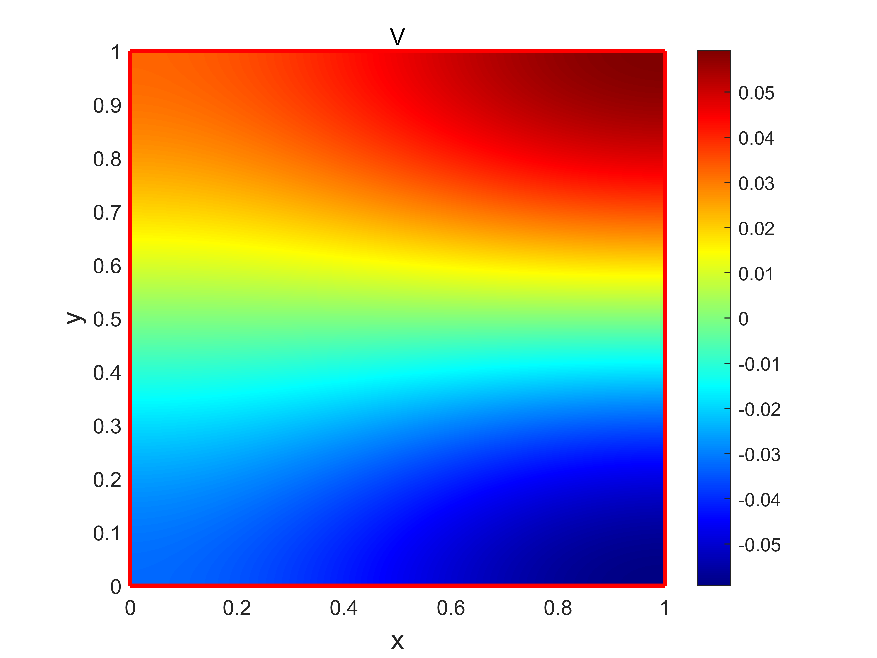}
\end{minipage}
\caption{Snapshots of the approximate solutions $(c_p,c_n)$ at $t = 0.1$ (top row), $t = 1.0$ (middle) and $t = 2.0$ (bottom).\label{Discontinuousconcentrations}}
\end{figure}

\begin{figure}[htbp]
\setlength{\abovecaptionskip}{1pt}
\setlength{\abovecaptionskip}{1pt}
\setlength{\belowcaptionskip}{1pt}
\renewcommand*{\figurename}{Fig.}
\centering
\begin{minipage}[t]{0.3\linewidth}
\includegraphics[width=1.80in, height=1.4in]{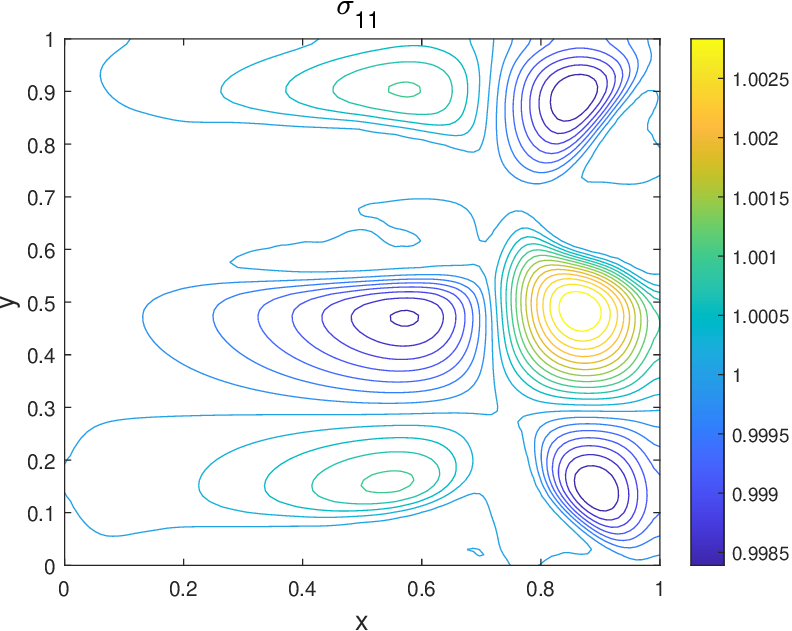}
\end{minipage}
\begin{minipage}[t]{0.3\linewidth}
\includegraphics[width=1.80in, height=1.4in]{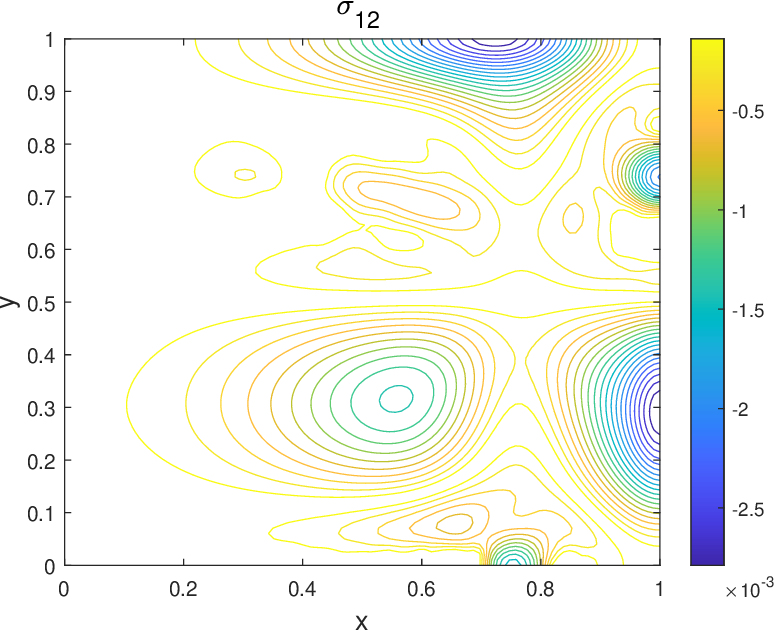}
\end{minipage}
\begin{minipage}[t]{0.3\linewidth}
\includegraphics[width=1.80in, height=1.4in]{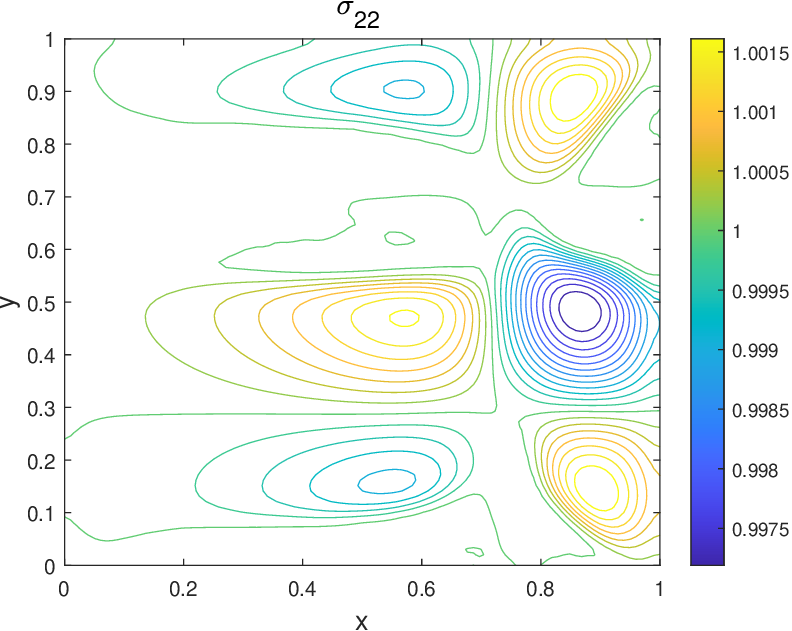}
\end{minipage}
\\
\begin{minipage}[t]{0.3\linewidth}
\includegraphics[width=1.80in, height=1.4in]{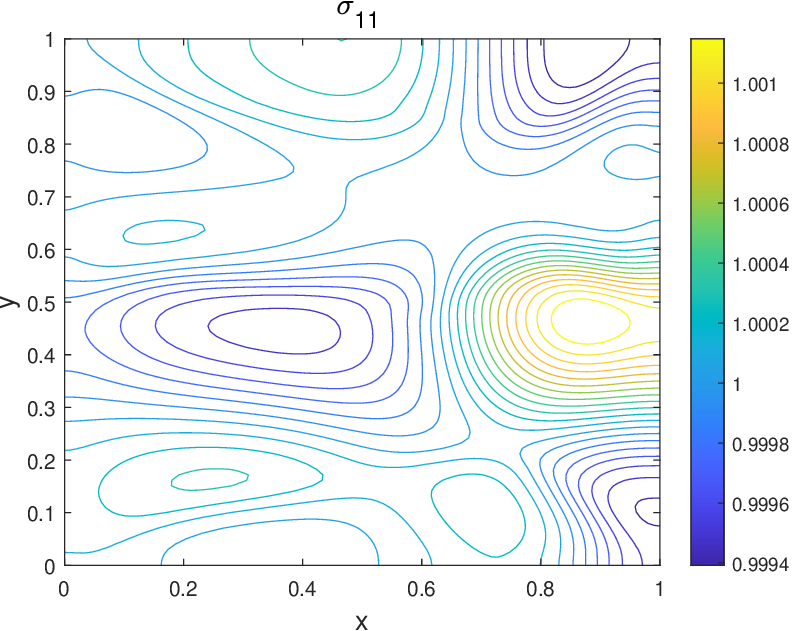}
\end{minipage}
\begin{minipage}[t]{0.3\linewidth}
\includegraphics[width=1.80in, height=1.4in]{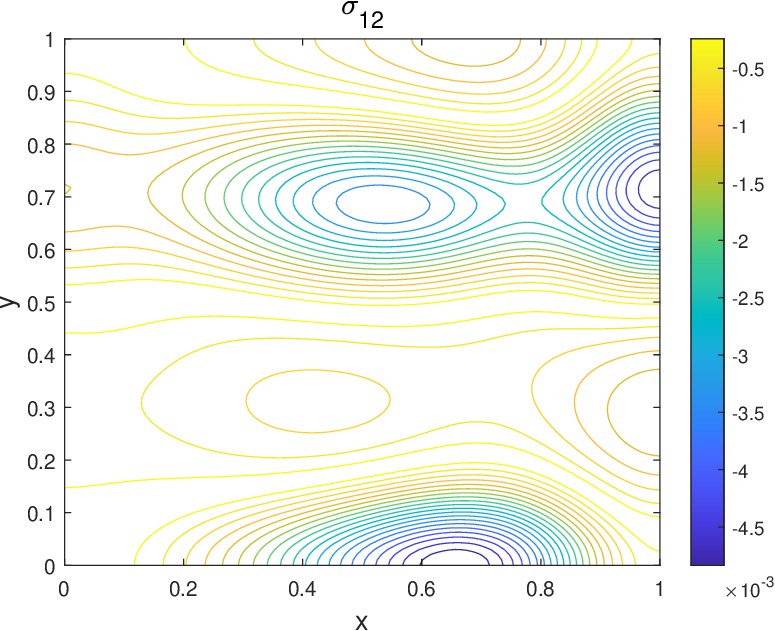}
\end{minipage}
\begin{minipage}[t]{0.3\linewidth}
\includegraphics[width=1.80in, height=1.4in]{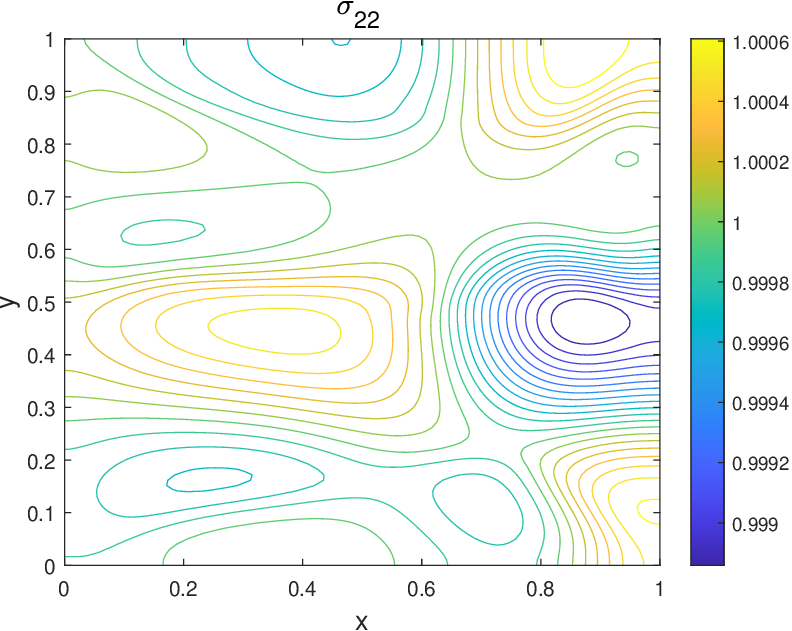}
\end{minipage}
\\
\begin{minipage}[t]{0.3\linewidth}
\includegraphics[width=1.80in, height=1.4in]{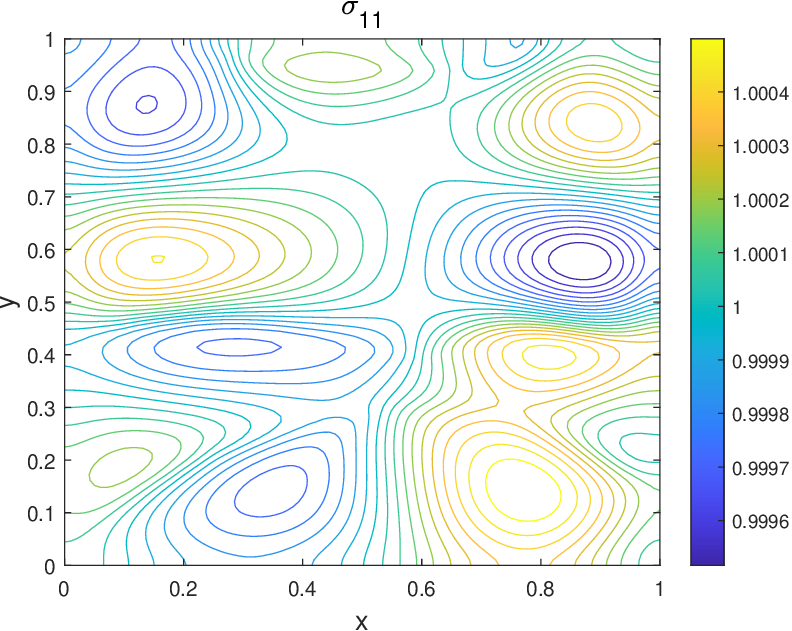}
\end{minipage}
\begin{minipage}[t]{0.3\linewidth}
\includegraphics[width=1.80in, height=1.4in]{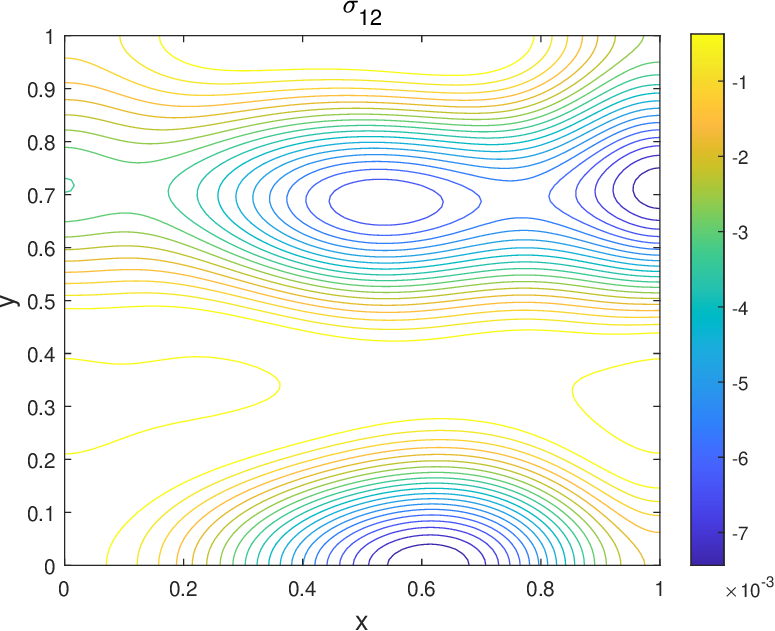}
\end{minipage}
\begin{minipage}[t]{0.3\linewidth}
\includegraphics[width=1.80in, height=1.4in]{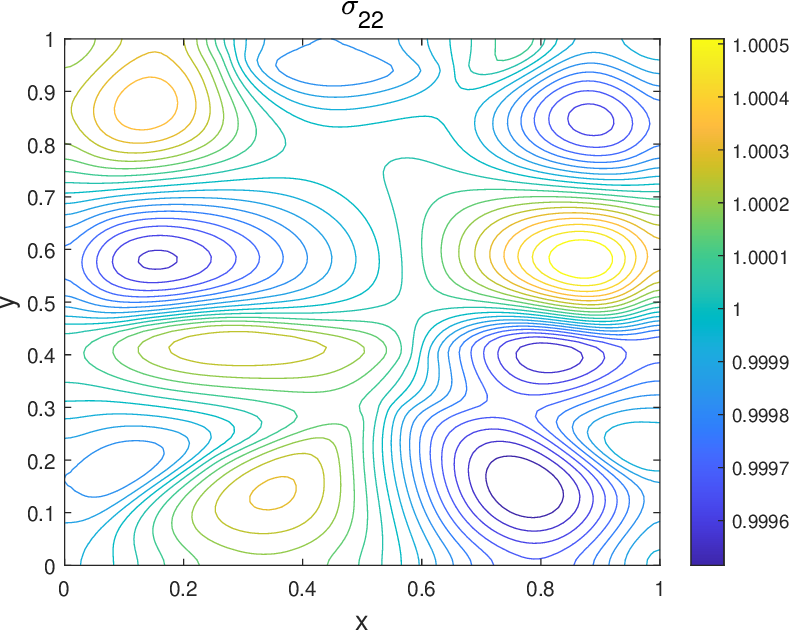}
\end{minipage}
\caption{Snapshots of the tensor $\sigma_{11}$, $\sigma_{12}$, $\sigma_{22}$ for $t=0.1,1,2$(from left to right).\label{Discontinuousconcentrationss}}
\end{figure}
We set $\lambda=0.1$,  $z_p = 1$, $z_n = -1$, $Pe=20$, $Re=10$, $Co=2.0$, $Wi=1$, $\kappa=0.001$, the time step $\Delta t = 1.0e-3$, the mesh size $h=\sqrt{2}/64$, 
The subsequent snapshots of two concentrations at $t = 0.1$, $t = 1.0$ and $t = 2.0 $ are depicted in Fig. \ref{Discontinuousconcentrations}, which match the results in \cite{gao2017linearized, prohl2009convergent, li2023error}.
The subsequent snapshots of the tensor $\sigma_{11}$, $\sigma_{12}$, $\sigma_{22}$ are also depicted in Fig. \ref{Discontinuousconcentrationss}.
\begin{figure}[htb]
\setlength{\abovecaptionskip}{1pt}
\setlength{\belowcaptionskip}{1pt}
\renewcommand*{\figurename}{Fig.}
\centering
\begin{minipage}[t]{0.41\linewidth}
\includegraphics[width=2.5in, height=2.2 in]{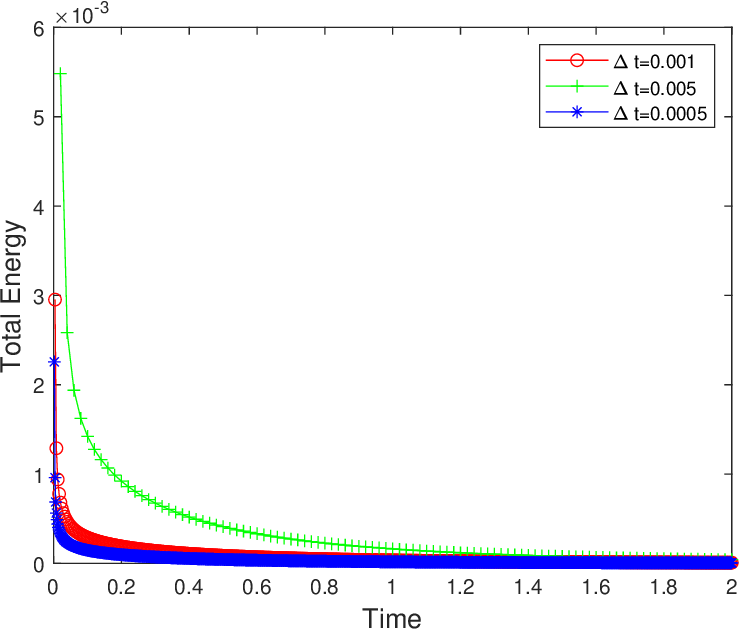}
\end{minipage}%
\caption{ Time evolution of the discrete total energy.\label{DEnergyDecay}}
\end{figure}

\subsection{Lid-driven cavity problem}
We consider one of the standard test problems for viscoelastic fluid flows: lid-driven cavity problem.
The results for the Oldroyd-B viscoelastic fluid flows without charge injection have been reported in \cite{venkatesan2017three}.
Unlike Newtonian fluids, viscoelastic fluids cannot sustain deformations near the upper corners and therefore the motion of the lid needs to be regularized such that $\nabla \bm{u}$ vanishes at the corners \cite{fattal2005time}.
We set the boundary conditions to be
\begin{equation}\label{test3:InitBC}
\begin{split}
&x=0:\quad c_p =1, \quad c_n=1,  \quad V=0, \quad u=0, \quad v=0,\quad\frac{\partial \bm{\sigma}}{\partial \mathbf{n}}=0, \\
&x=1:\quad c_p =11, \quad c_n=1, \quad V=1,\quad u=0, \quad v=0, \quad\frac{\partial \bm{\sigma}}{\partial \mathbf{n}}=0,\\
&y=0 :\quad \frac{\partial c_i}{\partial \mathbf{n}} =0(i=p,n), \quad \frac{\partial V}{\partial \mathbf{n}} =0, \quad u=0, \quad v=0,\quad \frac{\partial \bm{\sigma}}{\partial \mathbf{n}}=0,\\
&y=1 : \quad  \frac{\partial c_i}{\partial \mathbf{n}} =0(i=p,n), \quad\frac{\partial V}{\partial \mathbf{n}} =0, \quad u=16[1+\tanh(8(t-0.5))]x^2(1-x)^2, \quad v=0, \quad\frac{\partial \bm{\sigma}}{\partial \mathbf{n}}=0.
\end{split}
\end{equation}
The initial conditions are
\begin{align*}
&\bm{u}_0=\bm{0},\quad c_{p0}=10x+1,\quad c_{n0}=1, \quad \bm{\psi}=\bm{0}.
\end{align*}
The parameters are set $z_p = 1$, $z_n = -1$, $\lambda=0.5$, $Pe=10$, $Re=1$, $\kappa=0.001$, $M=1$. The time step is $\Delta t = 1.0e-3$. the mesh size is $h_x=h_y=\sqrt{2}/128$, and the final time $T = 5$. Fig.\ref{cross-section-tensor} shows the solution of $\sigma_{11}$ and $\sigma_{22}$, respectively, along the cross section line $y=1$. We can observe that the maximum value of $\sigma_{11}$ and $\sigma_{22}$ increases significantly when increasing the Weissenberg number.
The numerical solution of the horizontal velocity component $u$ along the cross-section line $x=0.5$ and vertical velocity component $v$ along the cross-section line $y=0.75$ are shown in Fig. \ref{cross-section-velocity}. As the Weissenberg number increases, the minimum value of the horizontal velocity component decreases in magnitude and its location moves closer to the lid. The extreme values of vertical velocity component decreases with an increase in the Weissenberg number. The contour plots of the components of the viscoelastic conformation stress $\sigma_{11}$, $\sigma_{12}$ and $\sigma_{22}$ with $Co=0.1$ are depicted in Fig. \ref{Contour-tensor} . We can observe that $\sigma_{11}$ has a thin boundary layer along the lid, whereas $\sigma_{12}$ and $\sigma_{22}$ have high gradient near the upper downstream corner.
Fig. \ref{Lid-driven-Streamlines} presents the flow field at $Co=0.1, M=1$ with $Wi = 1, 3, 20$, at $Wi=1, M=1$ with $Co = 0.1, 30, 100$ and at $Wi=3, C0=2$ with $M = 0, 1, 10$, respectively. It is well-known that for a Newtonian fluid when $M=0$, the lid-driven cavity flow problem can lead to a symmetrical horizontal location of the vortex. However, due to the presence of elastic effects in viscoelastic fluid, this symmetry is broken.
To be more specific, as the Weissenberg number increases, the large normal stresses that are generated in the viscoelastic fluid are advected into the downstream direction, leading to an increase in the flow resistance. To compensate this effect, the vortex shifts to the left. The experimental observations are consistent with literatures \cite{fattal2005time,venkatesan2017three}. A leftward shift of the vortex is also observed with an increase in $M$ number. 
Similarly, as the Coulomb-driven number increases, the stronger Coulomb force on the left attracts charged particles toward the left, causing the vortex location of the viscoelastic fluid to move toward the left direction.  

\begin{figure}[htbp]
\setlength{\abovecaptionskip}{1pt}
\setlength{\abovecaptionskip}{1pt}
\setlength{\belowcaptionskip}{1pt}
\renewcommand*{\figurename}{Fig.}
\centering
\begin{minipage}[t]{0.3\linewidth}
\includegraphics[width=1.80in, height=1.4in]{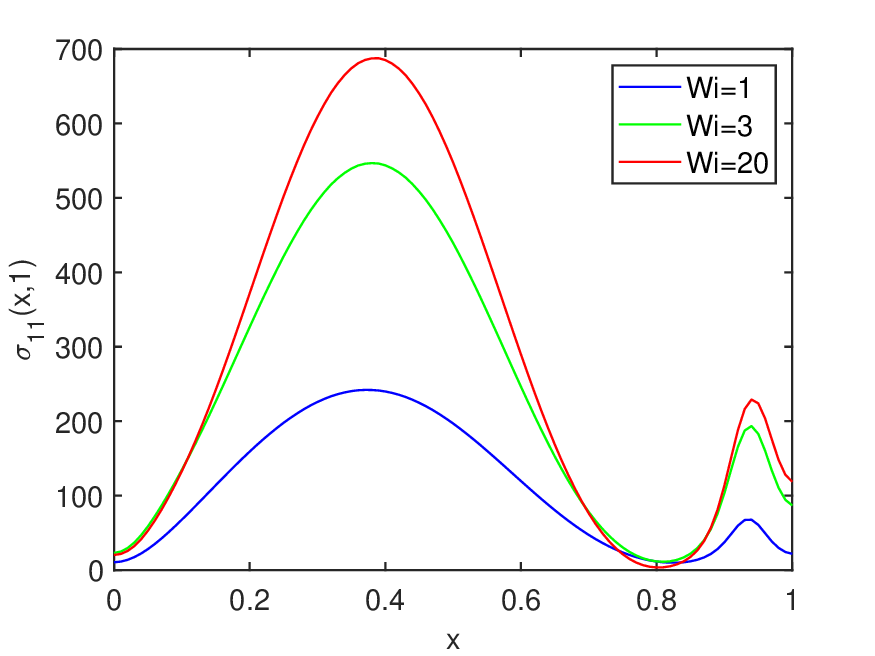}
\end{minipage}
\begin{minipage}[t]{0.3\linewidth}
\includegraphics[width=1.80in, height=1.4in]{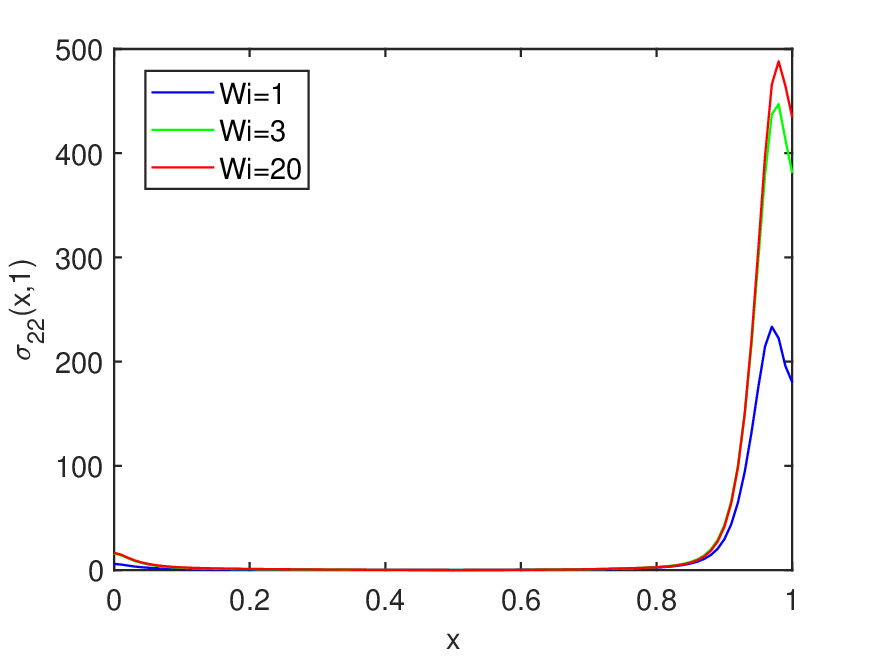}
\end{minipage}
\\
\begin{minipage}[t]{0.3\linewidth}
\includegraphics[width=1.80in, height=1.4in]{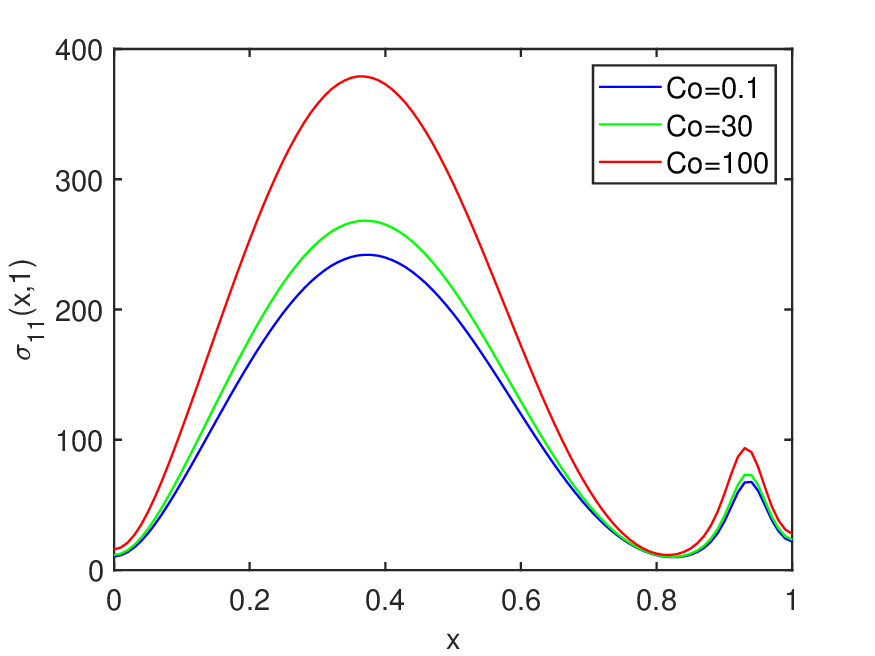}
\end{minipage}
\begin{minipage}[t]{0.3\linewidth}
\includegraphics[width=1.80in, height=1.4in]{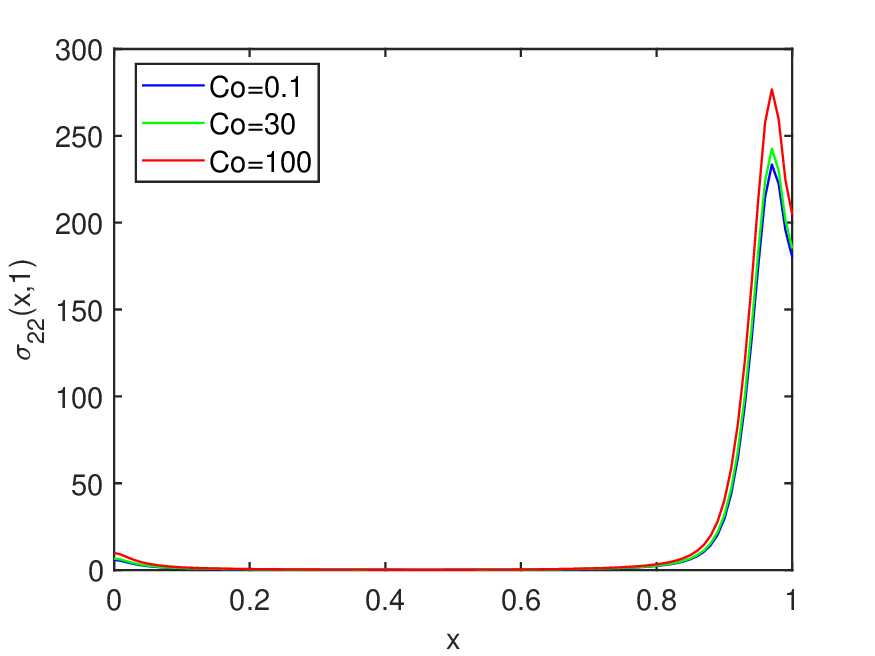}
\end{minipage}
\caption{The cross-section of $\sigma_{11}$ and $\sigma_{22}$ for different Wi and Co.\label{cross-section-tensor}}
\end{figure}

\begin{figure}[htbp]
\setlength{\abovecaptionskip}{1pt}
\setlength{\abovecaptionskip}{1pt}
\setlength{\belowcaptionskip}{1pt}
\renewcommand*{\figurename}{Fig.}
\centering
\begin{minipage}[t]{0.3\linewidth}
\includegraphics[width=1.80in, height=1.4in]{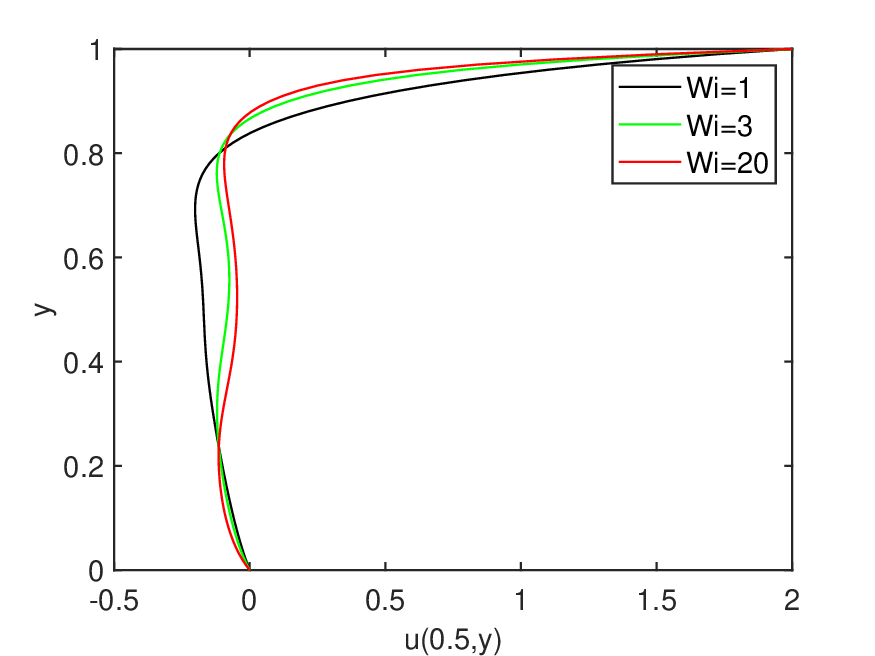}
\end{minipage}
\begin{minipage}[t]{0.3\linewidth}
\includegraphics[width=1.80in, height=1.4in]{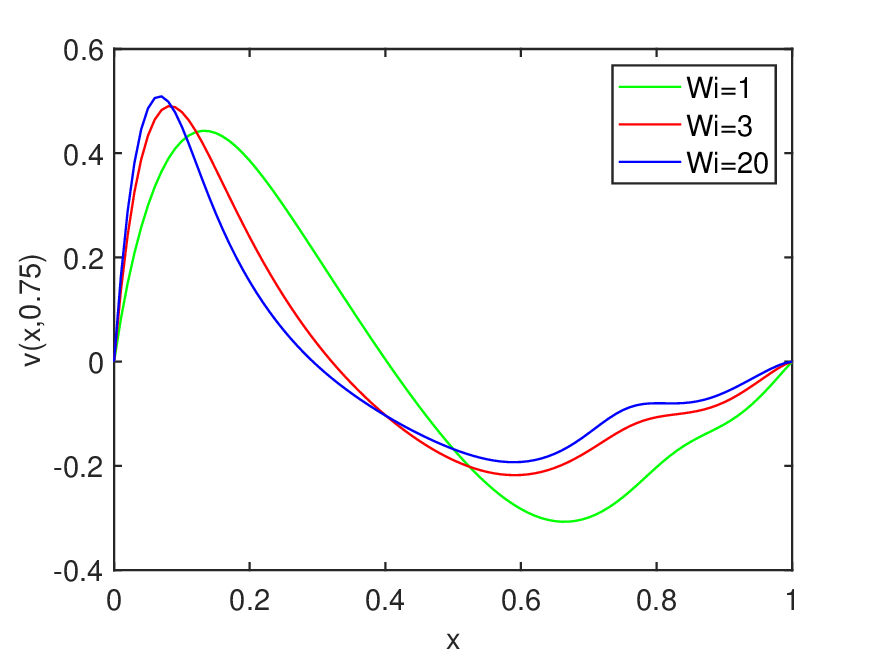}
\end{minipage}
\\
\begin{minipage}[t]{0.3\linewidth}
\includegraphics[width=1.80in, height=1.4in]{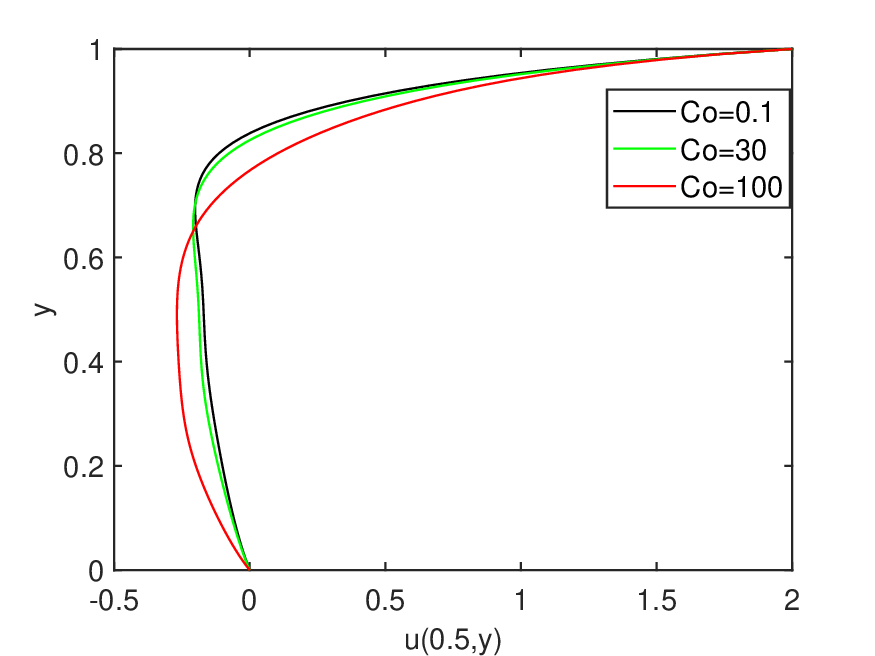}
\end{minipage}
\begin{minipage}[t]{0.3\linewidth}
\includegraphics[width=1.80in, height=1.4in]{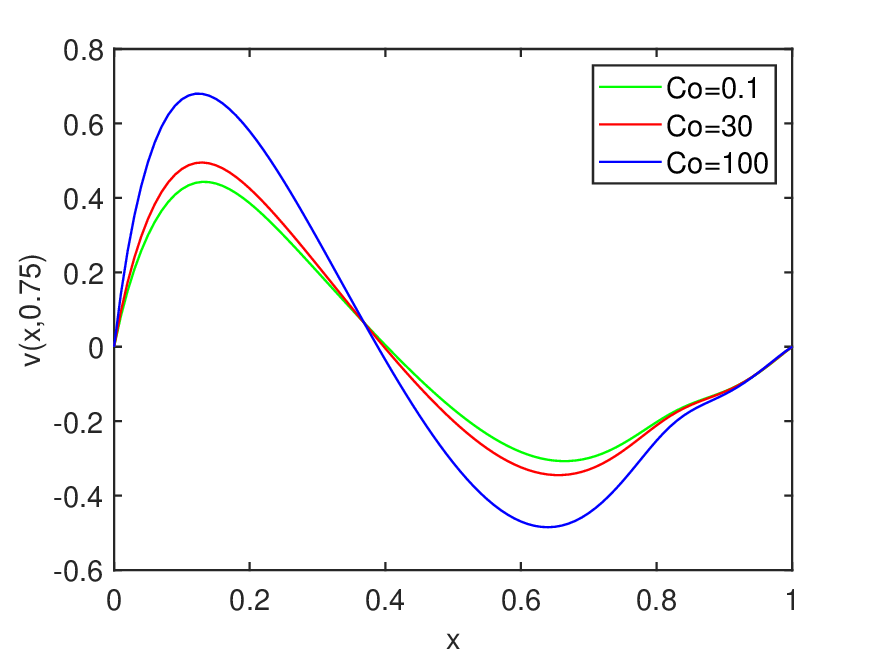}
\end{minipage}
\caption{The cross-section of velocity for different Wi and Co.\label{cross-section-velocity}}
\end{figure}

\begin{figure}[htbp]
\setlength{\abovecaptionskip}{1pt}
\setlength{\abovecaptionskip}{1pt}
\setlength{\belowcaptionskip}{1pt}
\renewcommand*{\figurename}{Fig.}
\centering
\begin{minipage}[t]{0.3\linewidth}
\includegraphics[width=1.80in, height=1.4in]{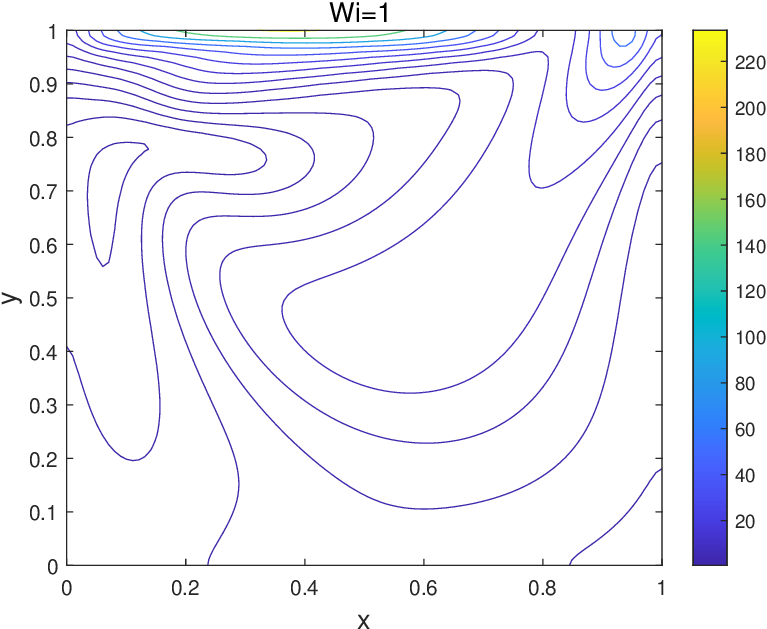}
\end{minipage}
\begin{minipage}[t]{0.3\linewidth}
\includegraphics[width=1.80in, height=1.4in]{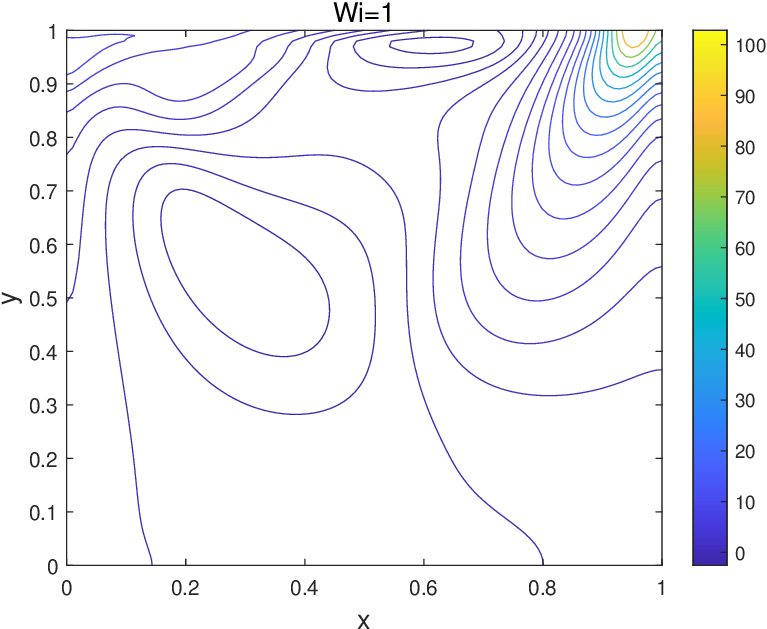}
\end{minipage}
\begin{minipage}[t]{0.3\linewidth}
\includegraphics[width=1.80in, height=1.4in]{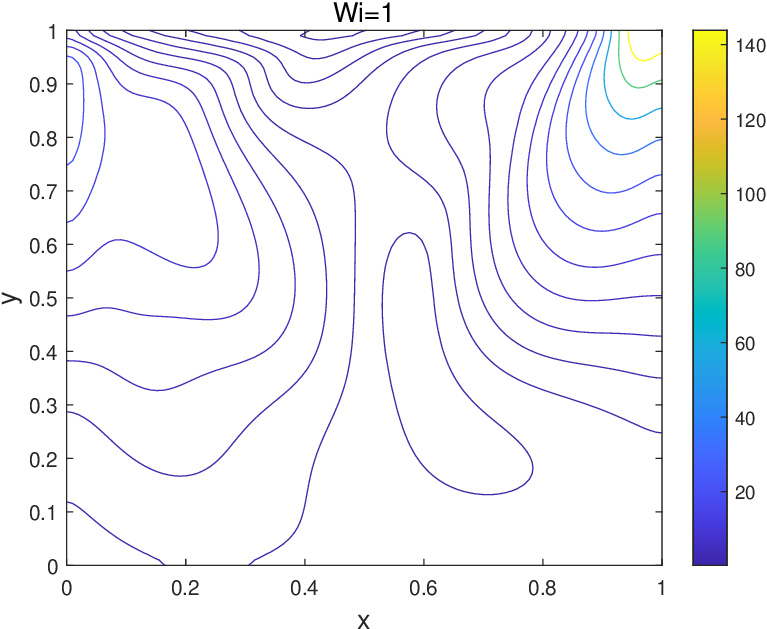}
\end{minipage}
\\
\begin{minipage}[t]{0.3\linewidth}
\includegraphics[width=1.80in, height=1.4in]{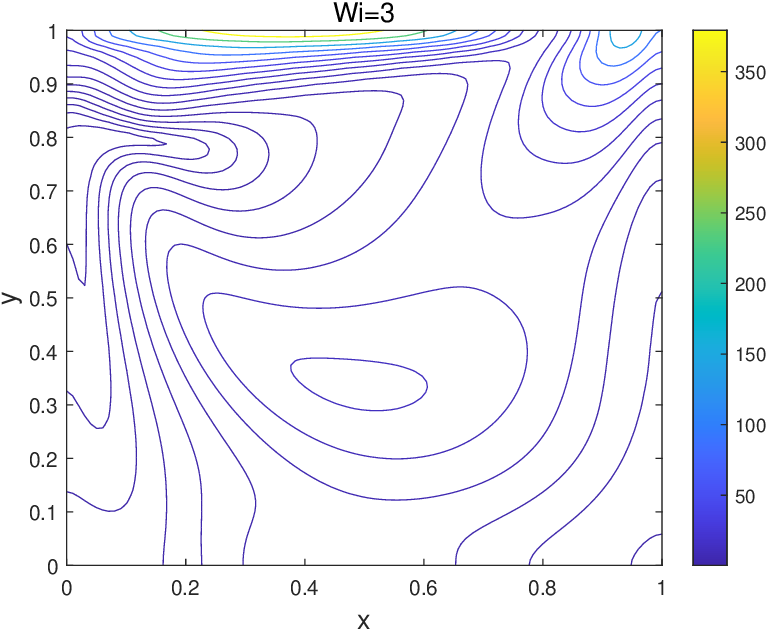}
\end{minipage}
\begin{minipage}[t]{0.3\linewidth}
\includegraphics[width=1.80in, height=1.4in]{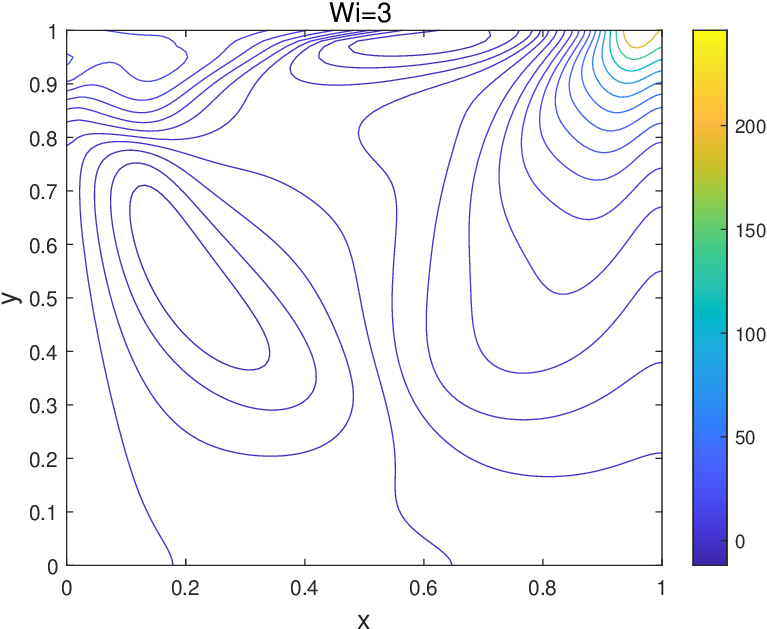}
\end{minipage}
\begin{minipage}[t]{0.3\linewidth}
\includegraphics[width=1.80in, height=1.4in]{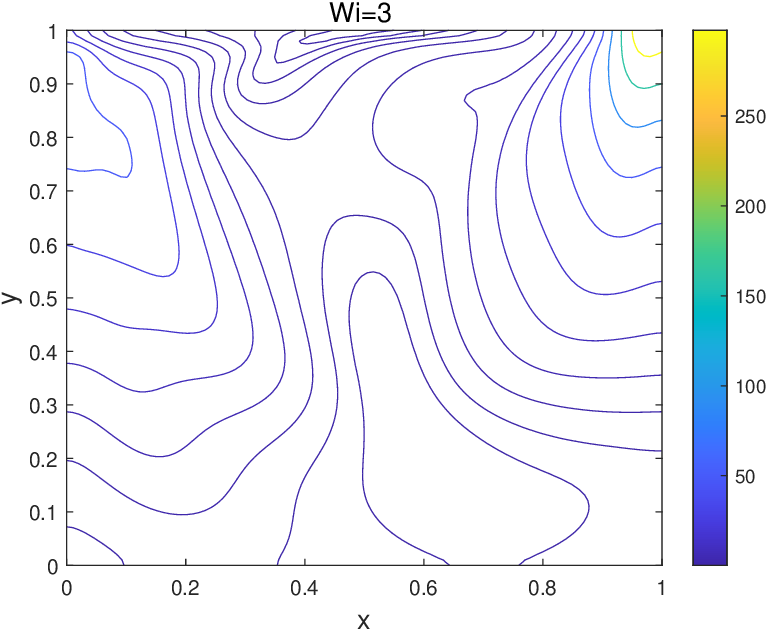}
\end{minipage}
\\
\begin{minipage}[t]{0.3\linewidth}
\includegraphics[width=1.80in, height=1.4in]{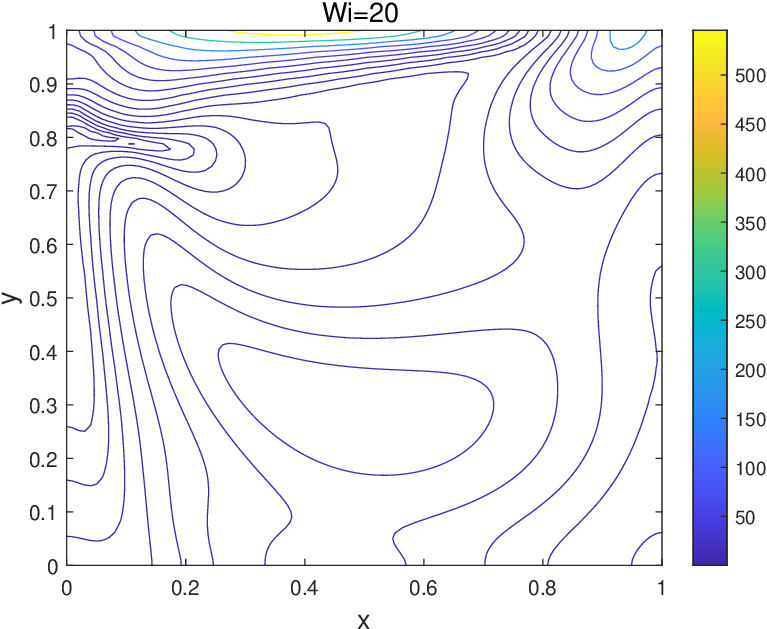}
\end{minipage}
\begin{minipage}[t]{0.3\linewidth}
\includegraphics[width=1.80in, height=1.4in]{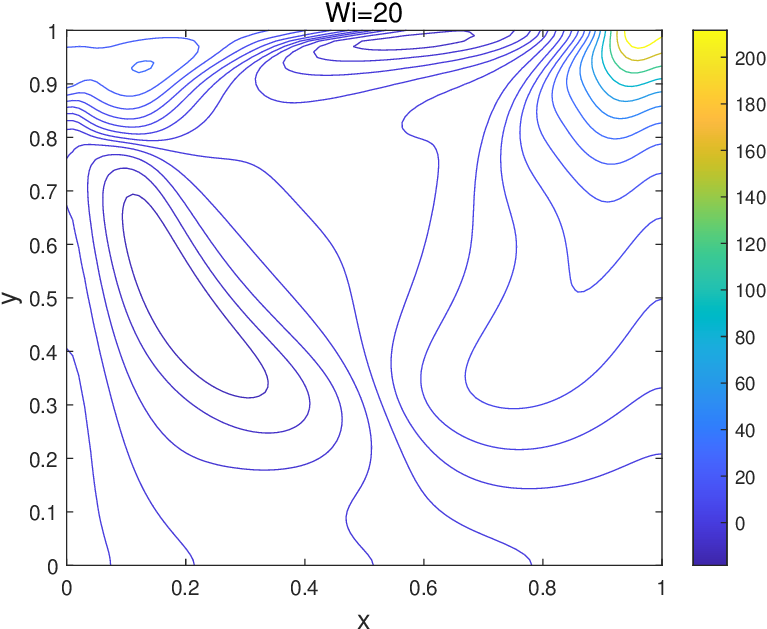}
\end{minipage}
\begin{minipage}[t]{0.3\linewidth}
\includegraphics[width=1.80in, height=1.4in]{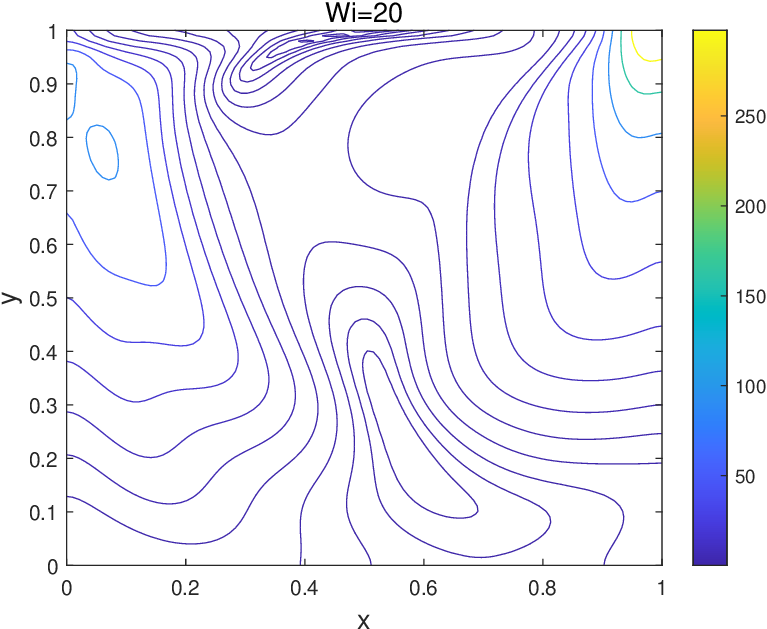}
\end{minipage}
\caption{The contour maps of the $\sigma_{11}$, $\sigma_{12}$ and $\sigma_{22}$ (left to right) for $Wi=1,3,20$.\label{Contour-tensor}}
\end{figure}

\begin{figure}[htbp]
\setlength{\abovecaptionskip}{1pt}
\setlength{\abovecaptionskip}{1pt}
\setlength{\belowcaptionskip}{1pt}
\renewcommand*{\figurename}{Fig.}
\centering
\begin{minipage}[t]{0.29\linewidth}
\includegraphics[width=1.8in, height=1.4in]{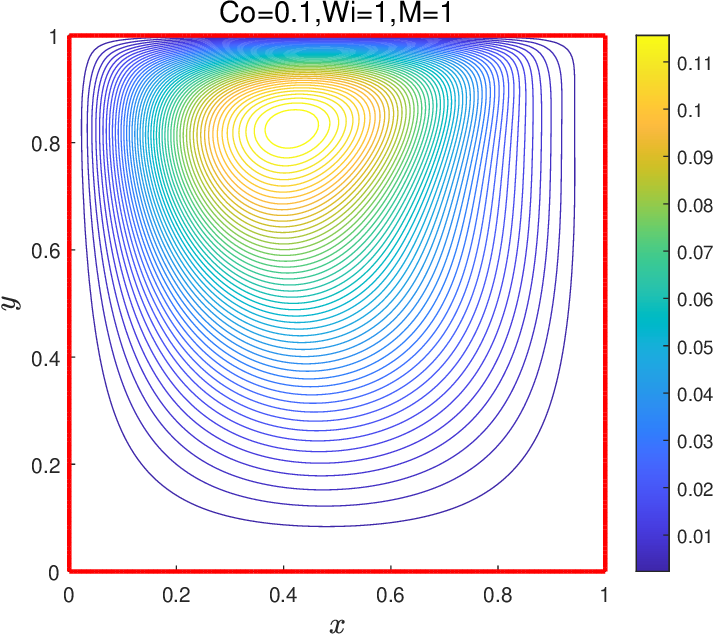}
\end{minipage}
\begin{minipage}[t]{0.29\linewidth}
\includegraphics[width=1.8in, height=1.4in]{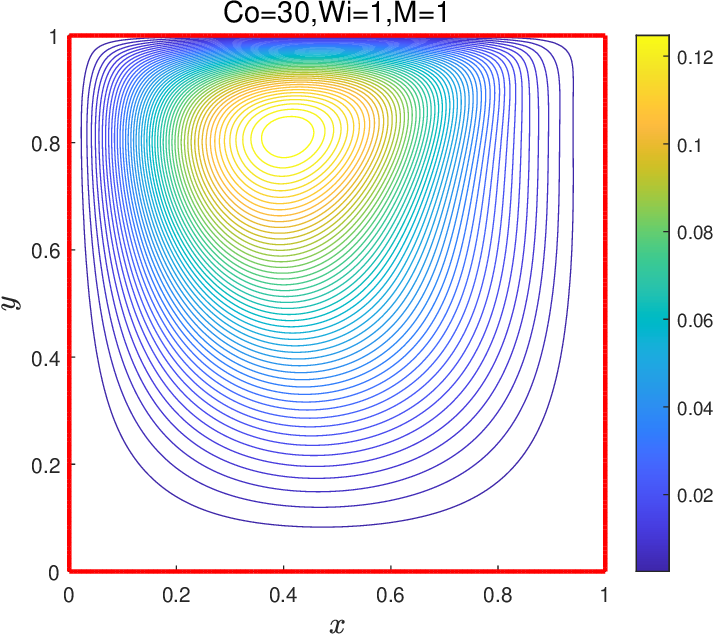}
\end{minipage}
\begin{minipage}[t]{0.29\linewidth}
\includegraphics[width=1.8in, height=1.4in]{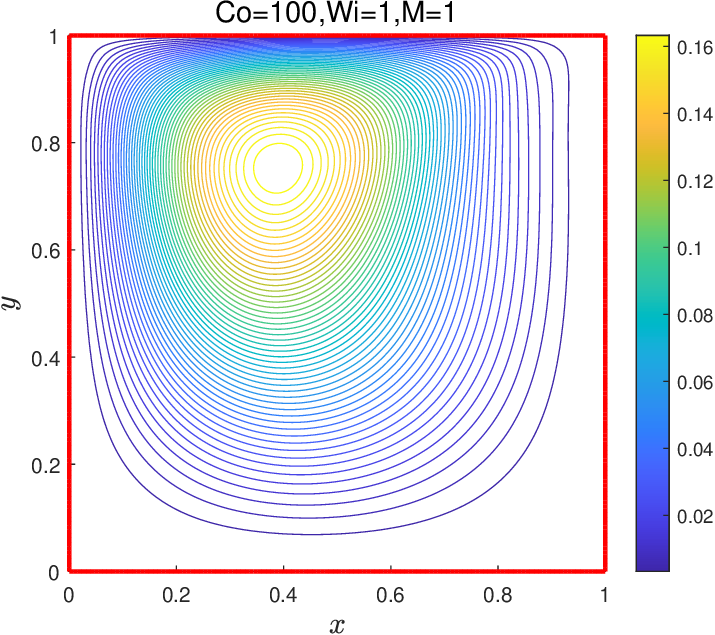}
\end{minipage}
\\
\begin{minipage}[t]{0.29\linewidth}
\includegraphics[width=1.8in, height=1.4in]{flowCo01Wi1EI1}
\end{minipage}
\begin{minipage}[t]{0.29\linewidth}
\includegraphics[width=1.8in, height=1.4in]{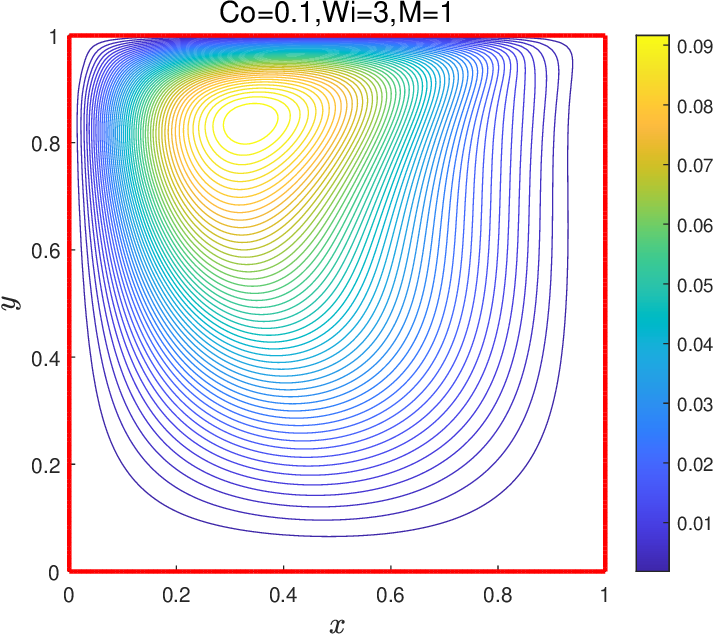}
\end{minipage}
\begin{minipage}[t]{0.29\linewidth}
\includegraphics[width=1.8in, height=1.4in]{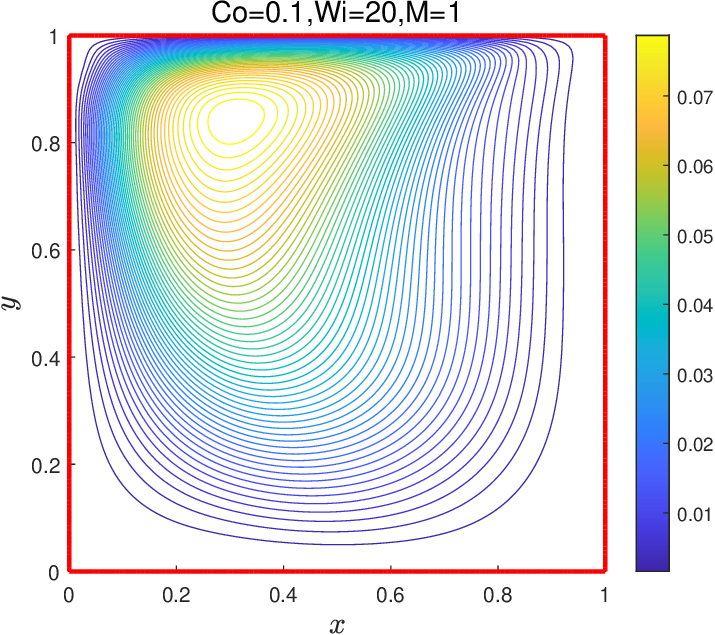}
\end{minipage}
\\
\begin{minipage}[t]{0.29\linewidth}
\includegraphics[width=1.8in, height=1.4in]{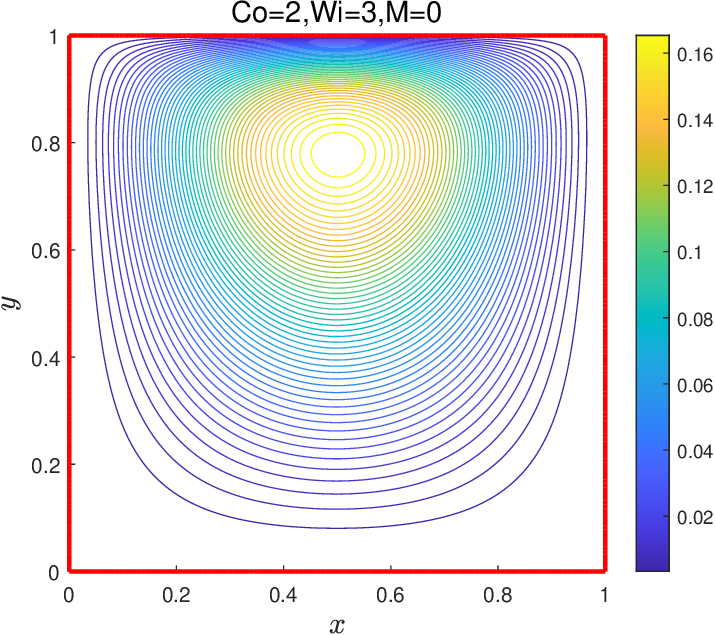}
\end{minipage}
\begin{minipage}[t]{0.29\linewidth}
\includegraphics[width=1.8in, height=1.4in]{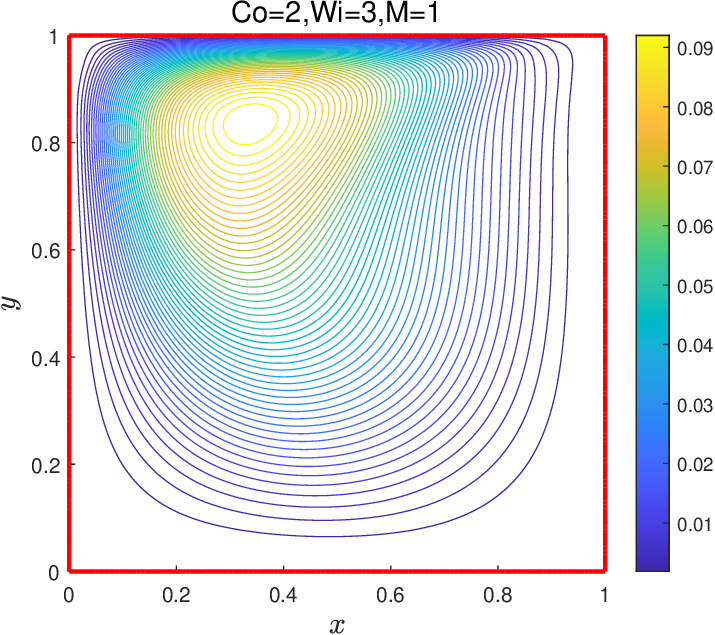}
\end{minipage}
\begin{minipage}[t]{0.29\linewidth}
\includegraphics[width=1.8in, height=1.4in]{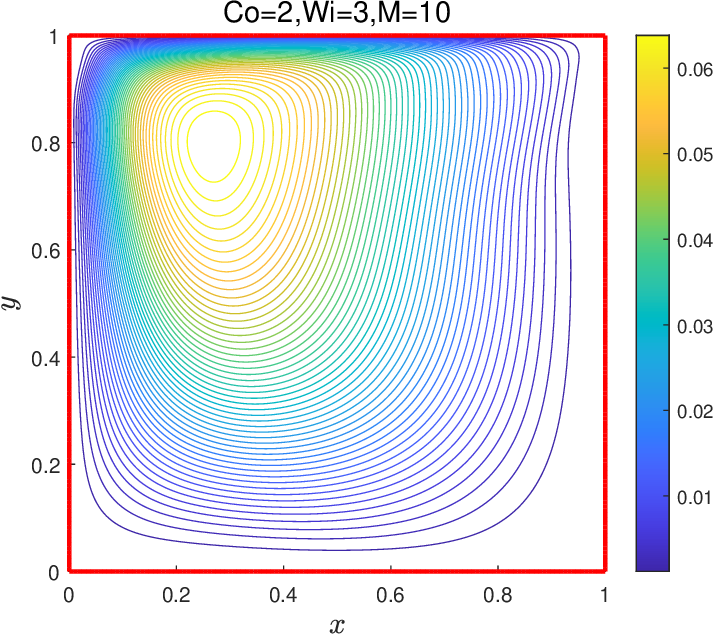}
\end{minipage}
\caption{ Streamline with different $Co$ number, $Wi$ number and $M$ number.\label{Lid-driven-Streamlines}}
\end{figure}

\section{Conclusions}
In this paper, we have developed and analyzed the first-order scheme for the Oldroyd-B electrohydrodynamic model, which is a complex system that describes the behavior of diffusive viscoelastic fluids under the influence of electric fields.
The proposed scheme is constructed based on an auxiliary variable approach for the flow equations and a splitting technique of the coupling terms.
The designed schemes has been rigorously proven to be energy stable, preserve positivity and mass conservation of the ionic concentrations. Moreover, they maintains the positive-definite property of the conformation tensor by logarithmic transformation. At each step, only linear and decoupled equations need to be solved, which significantly simplifies the computational complexity of the numerical implementation.
The numerical tests have confirmed the desired accuracy of the schemes and the theoretical claims.
In our numerical tests, the proposed scheme is efficient for large values of the Weissenberg number and it has been observed that the elastic effect of fluids influences the flow structures.


\bibliographystyle{unsrt}
\bibliography{reference}
	
\end{document}